\numberwithin{equation}{section}
\numberwithin{figure}{section}
\numberwithin{table}{section}
\newtheorem{theorem}{Theorem}[section]
\newtheorem{lemma}{Lemma}[section]
\newtheorem{proposition}{Proposition}[section]
\numberwithin{equation}{section}
\begin{document}
%\linenumbers

\begin{center}

{\Large\bf Estimating the index of increase via balancing deterministic and random data}

\vspace*{7mm}

{\large Lingzhi Chen$^{1,}\footnote{lchen522@uwo.ca}$,
Youri Davydov$^{2,}\footnote{youri.davydov@univ-lille1.fr}$,
Nadezhda Gribkova$^{3,}\footnote{n.gribkova@spbu.ru}$,
\break and
Ri\v cardas Zitikis$^{1,}\footnote{rzitikis@uwo.ca}$}

\bigskip

$^{1}$\textit{School of Mathematical and Statistical Sciences,
Western University, \break London, Ontario N6A 5B7, Canada}

\medskip

$^{2}$\textit{Chebyshev Laboratory, St.\,Petersburg State University, Vasilyevsky Island, \break St.\,Petersburg 199178, Russia}

\medskip

$^{3}$\textit{Faculty of Mathematics and Mechanics, St.\,Petersburg State University,\\ St.\,Petersburg 199034, Russia}

\end{center}

\medskip

\begin{quote}
\textbf{Abstract.} We introduce and explore an empirical index of increase that works in both deterministic and random environments, thus allowing to assess monotonicity of functions that are prone to random measurement-errors. We  prove consistency of the index and show how its rate of convergence is influenced by deterministic and random parts of the data. In particular, the obtained results suggest a frequency at which observations should be taken in order to reach any pre-specified level of estimation precision. We illustrate the index using data arising from purely deterministic and error-contaminated functions, which may or may not be monotonic.

\medskip

{\it Key words and phrases:} index of increase, determinism, randomness, measurement errors, smoothing, cross validation.

\medskip

{\it 2010 MSC:}  Primary: 62G05, 62G08, 62G20; Secondary: 62P15, 62P20, 62P25.
\end{quote}

\newpage

\section{Introduction}
\label{intro}

Dynamic processes in populations are often described using functions (e.g., Bebbington et al., 2007, 2011; and references therein). They are observed in the form of data points, usually contaminated by measurement errors. We may think of these points as randomly perturbed true values of underlying functions, whose measurements are taken at certain time instances. The functions, their rates of change, and de/acceleration can be and frequently are  non-monotonic. Nevertheless, it is of interest to assess and even compare the extent of their monotonicity, or lack of it. We refer to Qoyyimi (2015) for a discussion and literature review of various applications.

Several methods for assessing monotonicity have been suggested in the literature (e.g., Davydov and Zitikis, 2005, 2017;  Qoyyimi, and Zitikis, 2014, 2015). In particular, Davydov and Zitikis (2017) show the importance of such assessments in insurance and finance, especially when dealing with weighted insurance calculation principles (Furman and Zitikis, 2008), among which we find such prominent examples as the Esscher (B\" uhlmann, 1980, 1984), Kamps (1998), and Wang (1995, 1998) premiums. Furthermore, Egozcue et al. (2011) provide problems in economics where the sign of the covariance
\begin{equation}\label{cov-1}
\mathbf{Cov}[X,w(X)]
\end{equation}
needs to be determined for various classes of function $w$. One of such examples concerns the slope of indifference curves in two-moment expected utility theory (e.g., Eichner and Wagener, 2009;  Sinn, 1990;  Wong, 2006; and references therein). Another problem concerns decision making (e.g., speculation, normal backwardation, contango, etc.) of competitive companies under price uncertainty (e.g., Holthausen, 1979; Feder et al., 1980; Hey, 1981; Meyer and Robison, 1988; and references therein).

Lehmann (1966) has shown that if the function $w$ is monotonic, then covariance (\ref{cov-1}) is either positive (when $w$ is increasing) or negative (when $w$ is decreasing). This monotonicity assumption on $w$, though satisfied in a number of cases of practical interest, excludes a myriad of important cases with more complex risk profiles. For example, when dealing with the aforementioned economics-based problems, the role of $w$ is played by the derivative $u'$ of the underlying utility function, which may not be convex or concave everywhere, as argued and illustrated by, e.g., Friedman and Savage (1948), Markowitz (1952), Kahneman and Tversky (1979), Tversky and Kahneman (1992), among others. Hence, since $w$ might be non-monotonic, how far can this function be from being monotonic, or increasing? Furthermore, since the population risk- or utility-profile cannot be really known, the non-monotonicity of $w$ needs to be assessed from data, and this leads us to the statistical problem of this paper.

In addition, supported by the examples of Anscombe (1973) on potential pitfalls when using the classical correlation coefficient, Chen and Zitikis (2017) argue in favour of using the index of increase, as defined by Davydov and Zitikis (2017), for assessing non-monotonicity of scatterplots. Chen and Zitikis (2017) apply this approach to analyze and compare student performance in subjects such as mathematics, reading and spelling, and illustrate their reasoning on data provided by Thorndike and Thorndike-Christ (2010). One of the methods discussed by Chen and Zitikis (2017) deals with scatterplots representing finite populations, in which case large-sample estimation is not possible. The other method involves large-sample regression techniques (Figure \ref{fig-loess}), in which case Chen and Zitikis (2017) calculate the corresponding indices of increase using a numerical approach, that gives rise to the values denoted by $\mathrm{I}$ and reported in the bottom-right corners of the panels of Figure \ref{fig-loess}.
\begin{figure}[h!]
\centering
\includegraphics[width = 0.7\textwidth]{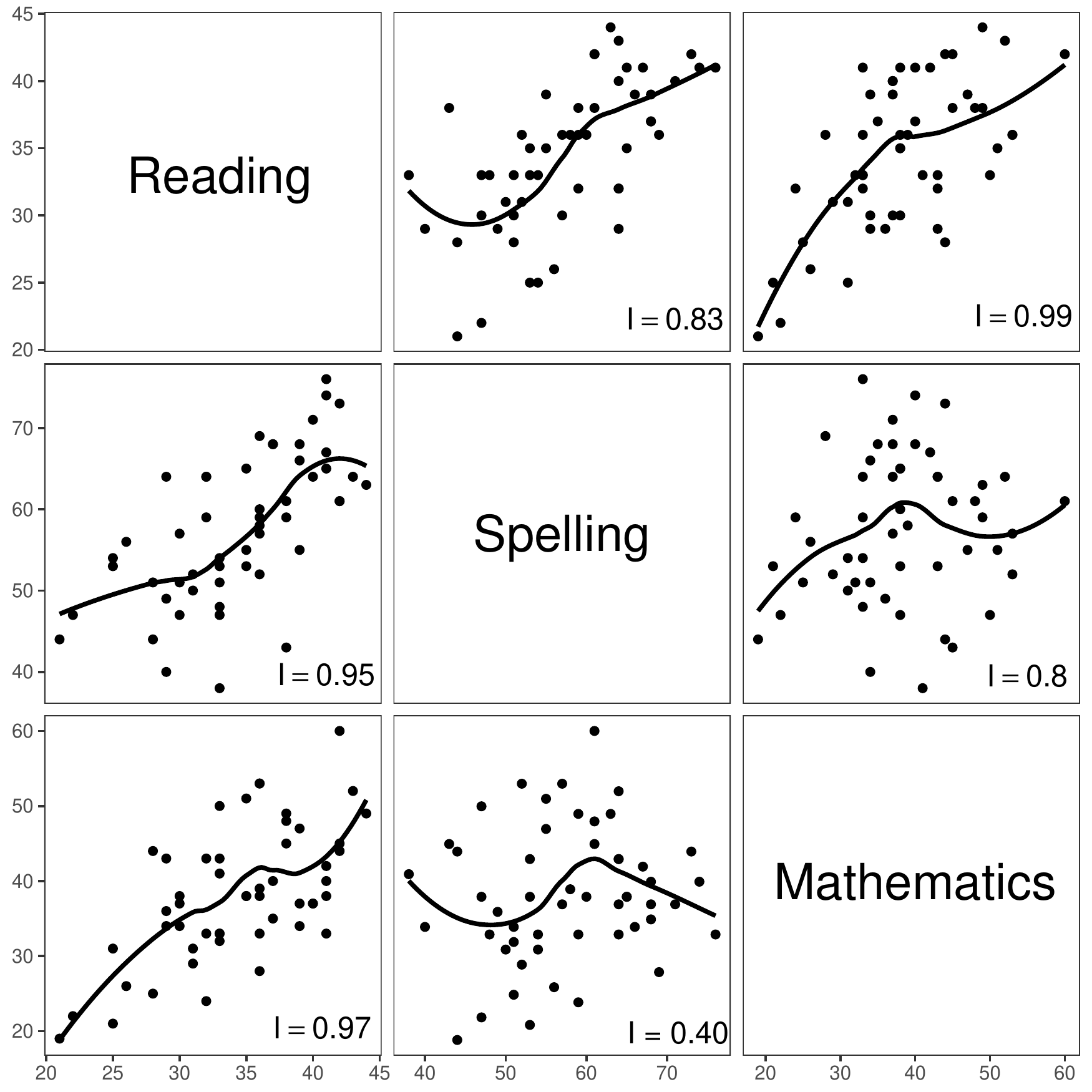}
\caption{Regression curves fitted to the student scores reported by Thorndike and Thorndike-Christ (2010), and their indices of increase.}
\label{fig-loess}
\end{figure}
Though important, these methods do not allow direct large-sample  non-monotonicity  quantifications and thus inferences about larger populations. In this paper, therefore, we offer a statistically attractive and computationally efficient procedure for assessing data patterns that arise from non-monotonic patterns contaminated by random measurement errors.

We have organized the rest of the paper as follows. In Section \ref{tech}, we introduce the index and provide basic arguments leading to it. In Section \ref{practice}, we explain why and how the index needs to be adjusted in order to become useful in situations when random measurement errors are present. In Section \ref{consistency}, we rigorously establish consistency of the estimator and introduce relevant data-exploratory and cross-validatory techniques.  Since the limiting distribution of the estimator is complex, in Section \ref{bootstrap} we implement a bootstrap-based procedure for determining standard errors and, in turn, for deriving confidence intervals. Section~\ref{conclude} concludes the paper with a brief summary of our main contributions.

\section{The index of increase}
\label{tech}

Davydov and Zitikis (2017) have introduced the index of increase
\begin{equation}
\mathrm{I}(h_0)={\int_a^b (h_0')_{+}\text{d}\lambda \over \int_a^b |h_0'|\text{d}\lambda }
\quad \bigg (:={\int_a^b (h_0'(t))_{+}\text{d}t \over \int_a^b |h_0'(t)|\text{d}t } \bigg )
\label{index}
\end{equation}
for any absolutely continuous (e.g., differentiable) function $h_0$ on interval $[a,b]$, where $(h_0')_{+}:=\max \{h_0' , 0\} $, and ``$:=$'' denotes equality by definition. Throughout the paper, we use $\lambda$ to denote the Lebesgue measure, which helps us to write integrals compactly, as seen from the  ratios above. We shall explain how the index arises later in the current section. Of course, this framework reduces to the unit interval $[0,1]$ by considering the function $h(t):=h_0(a+(b-a)t)$ instead of $h_0$. Namely, we have
\begin{equation}
\mathrm{I}(h_0)={\int_0^1 (h')_{+}\text{d}\lambda \over \int_0^1 |h'|\text{d}\lambda }=:\mathrm{I}(h).
\label{index-norm}
\end{equation}

To illustrate, in Figure \ref{fig-quartet}
\begin{figure}[h!]
\centering
  \centering
  \subfigure[$\mathrm{I}(h_{1})\approx \mathrm{I}_n(h_{1})=0.6667$]{%
    \includegraphics[height=0.3\textwidth]{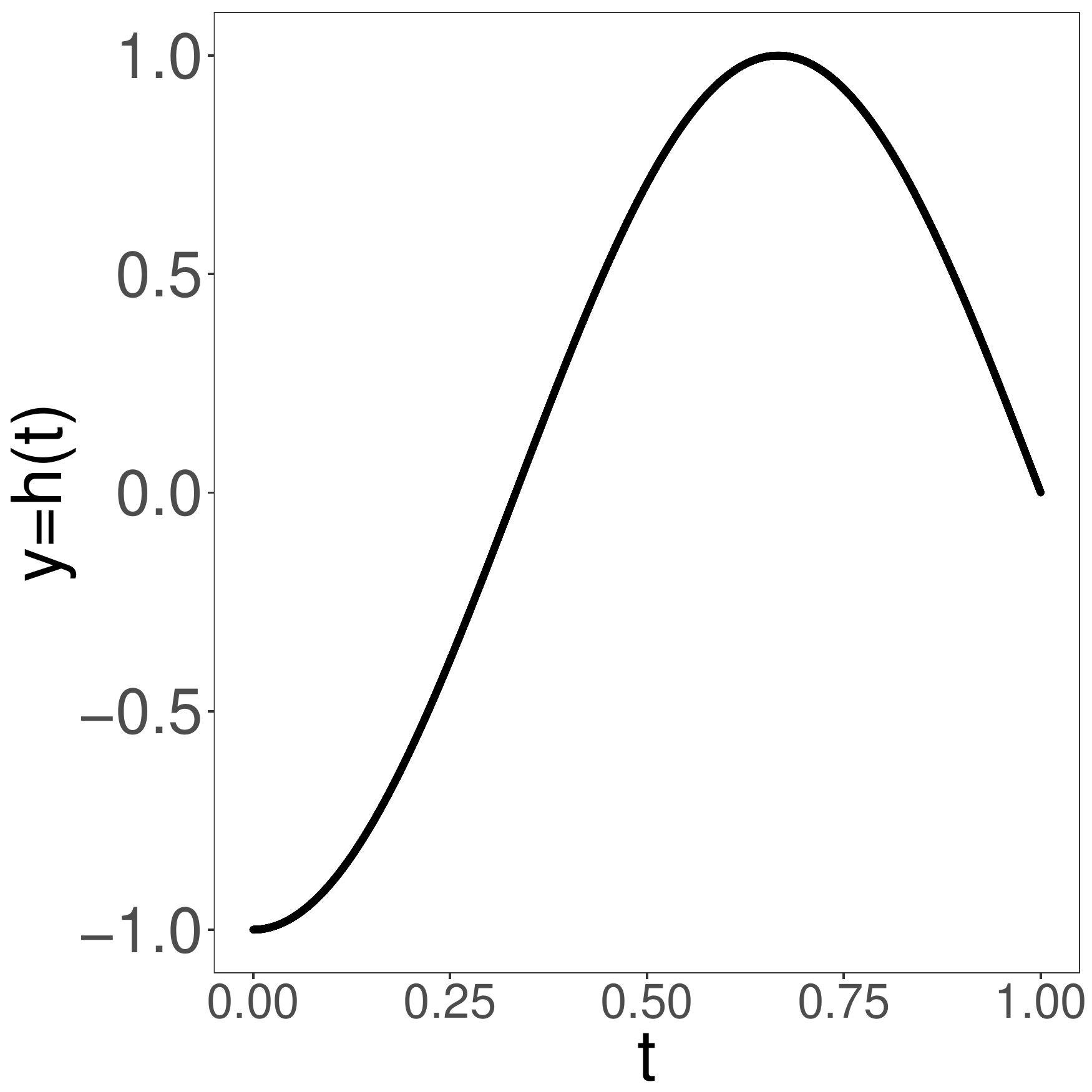}}%
\hspace{0.5in}
\subfigure[$\mathrm{I}(h_{2})\approx\mathrm{I}_n(h_{2})=0.3333$]{%
    \includegraphics[height=0.3\textwidth]{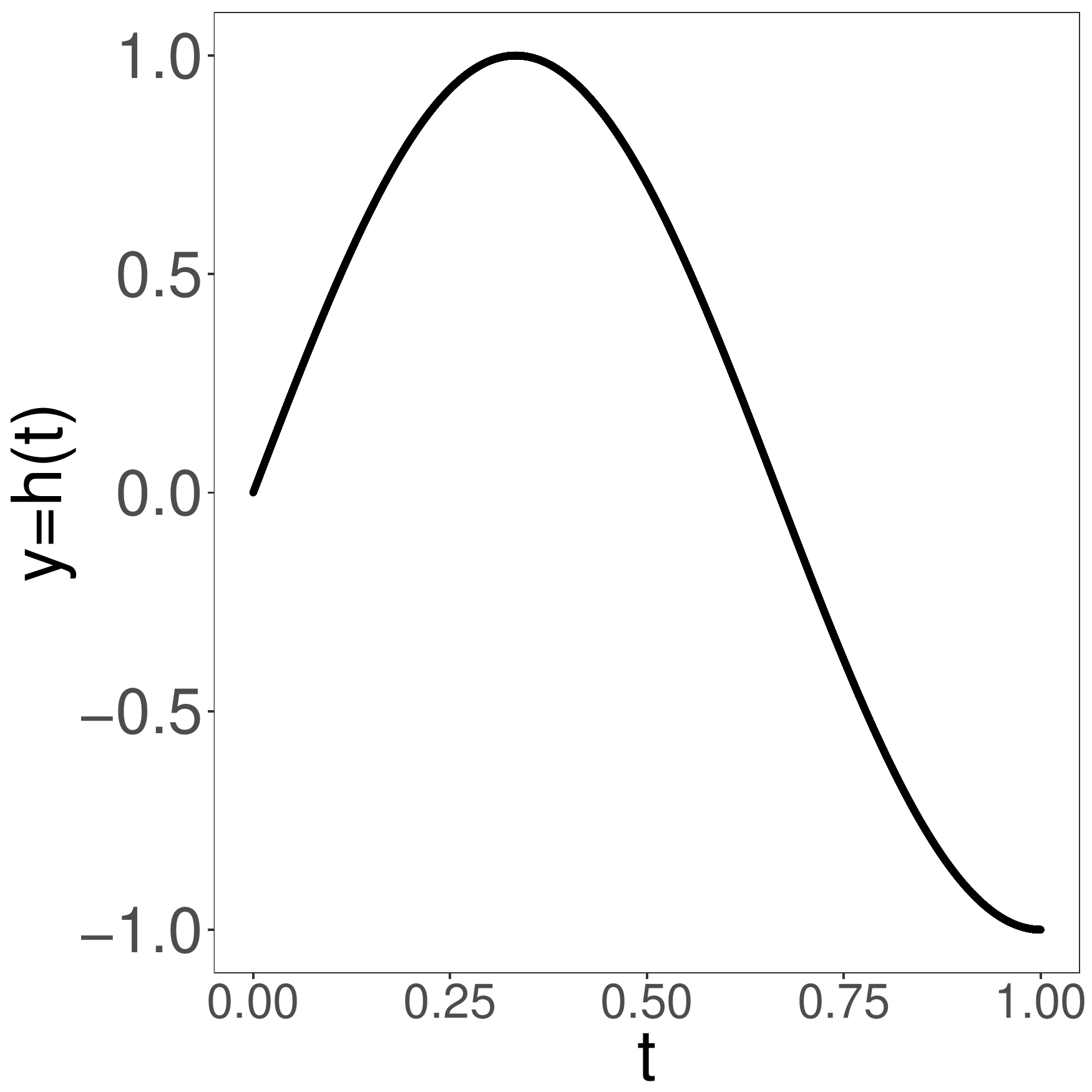}}%
\hspace{0.5in}
 \subfigure[$\mathrm{I}(h_{3})=\mathrm{I}_n(h_{3})=1$]{%
    \includegraphics[height=0.3\textwidth]{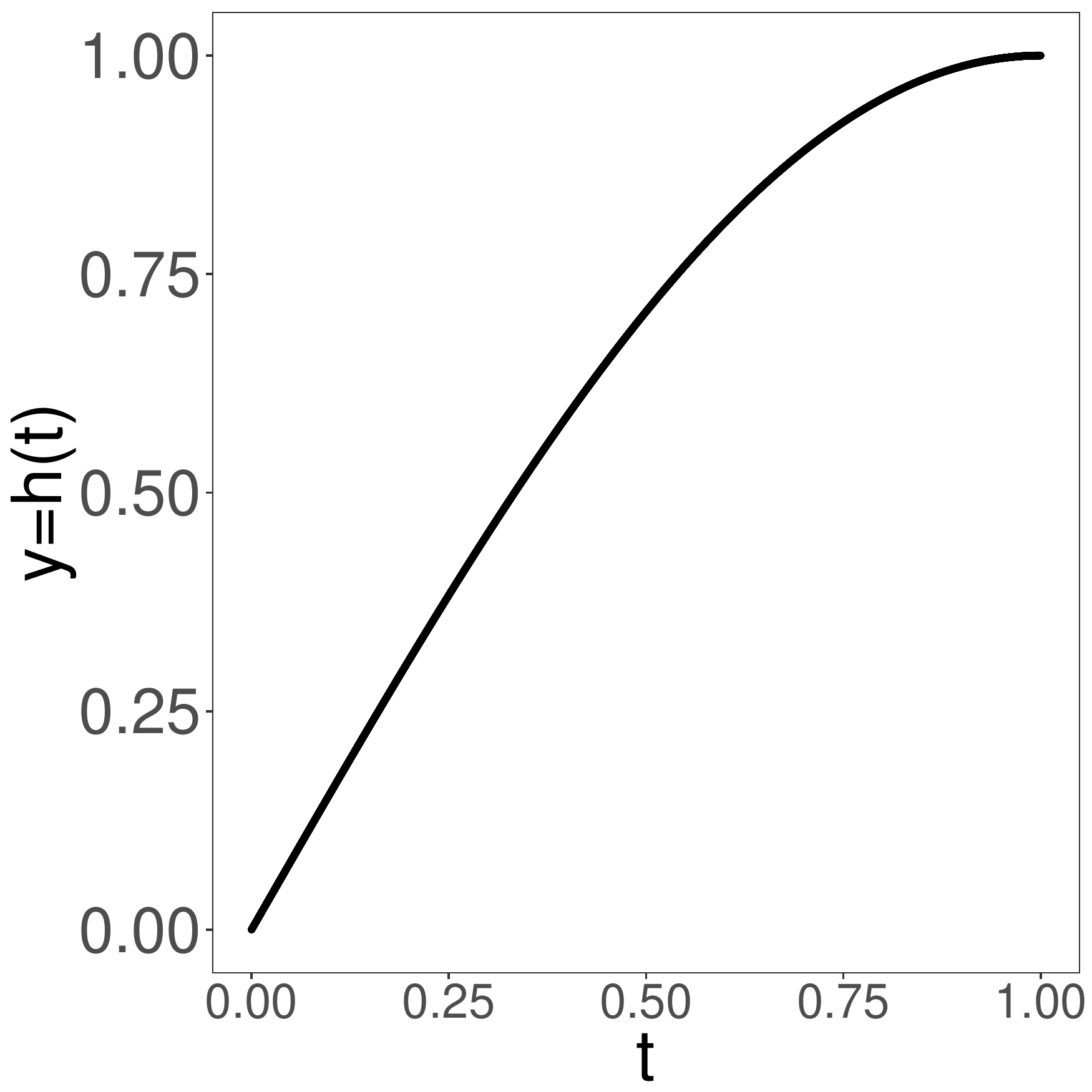}}%
\hspace{0.5in}
\subfigure[$\mathrm{I}(h_{4})=\mathrm{I}_n(h_{4})=0$]{%
    \includegraphics[height=0.3\textwidth]{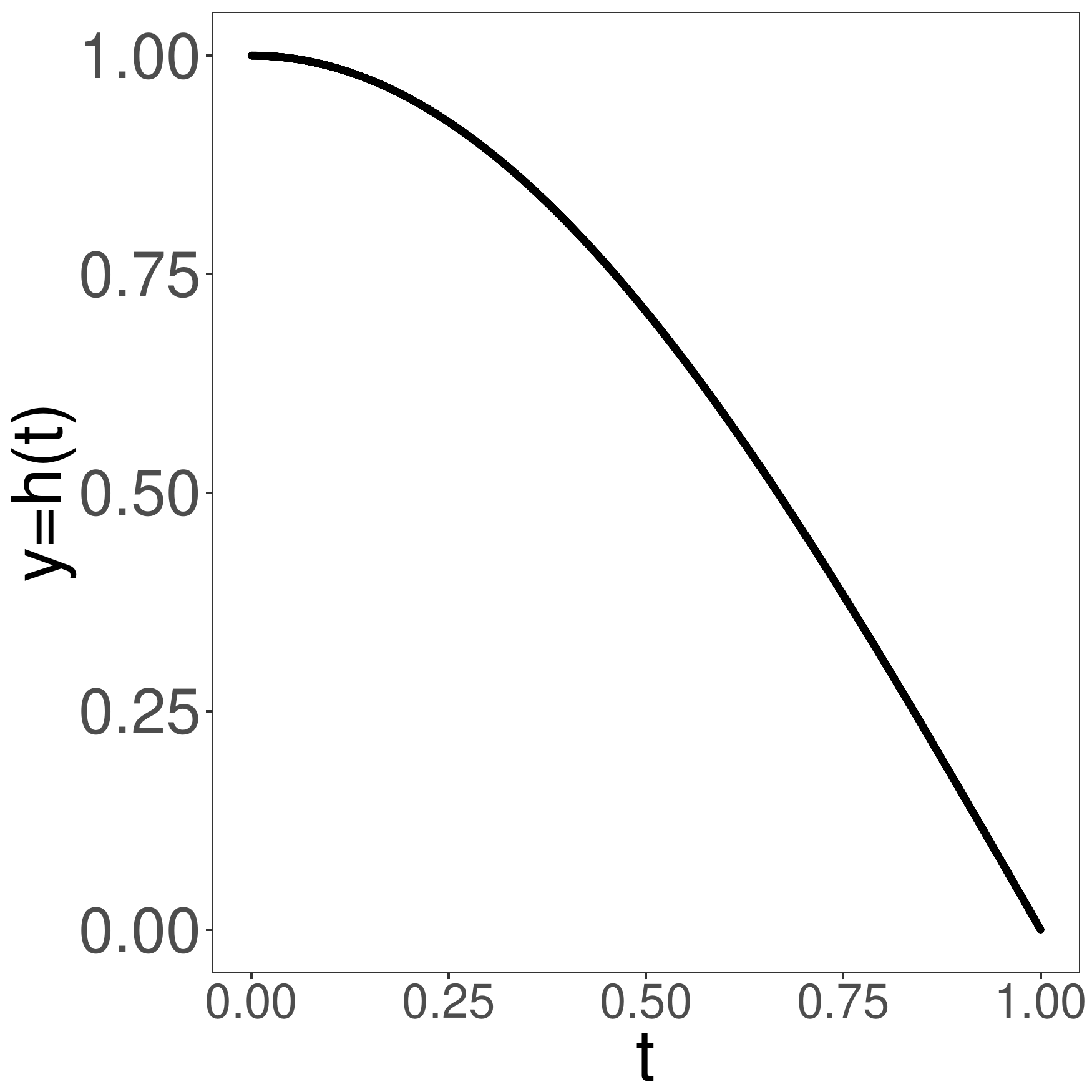}}%
\caption{The functions of quartet (\ref{quartet-1}) and their indices of increase}
\label{fig-quartet}
\end{figure}
we have visualized the following quartet of functions
\begin{equation}
\begin{split}
h_1(t)=\sin\Big(-{\pi\over 2}+{3\pi \over 2}t\Big),
&\quad h_2(t)=\cos\Big(-{\pi\over 2}+{3\pi \over 2}t\Big),
\\
h_3(t)=\sin\Big({\pi\over 2}t\Big),
&\quad h_4(t)=\cos\Big({\pi\over 2}t\Big),
\end{split}
\label{quartet-1}
\end{equation}
and we have also calculated their indices of increase. Since $h_3$ and $h_4$ are monotonic functions on the interval $[0,1]$, calculating their indices of increase using formula (\ref{index-norm}) is trivial, but the same task in the case of non-monotonic functions $h_1$ and $h_2$ requires some effort. To facilitate such calculations in a speedy fashion, and irrespective of the complexity of functions, we suggest using the numerical approximation
\begin{equation}\label{approx-numeric}
\mathrm{I}_n(h):={\sum_{i=2}^n (h(t_{i,n})-h(t_{i-1,n}))_{+} \over \sum_{i=2}^n |h(t_{i,n})-h(t_{i-1,n})| }
\end{equation}
with $t_{i,n}=(i-1)/(n-1)$ for $i=1,\dots, n$. Intuitively, $\mathrm{I}_n(h)$ is the proportion of the upward movements of the function $h$ with respect to all the movements, upward and downward.

Knowing the convergence rate of $\mathrm{I}_n(h)$ to $\mathrm{I}(h)$ when $n\to \infty $ is important as it allows us to set a frequency $n$ at which the measurements of $h(t_{i,n})$ could be taken during the observation period (e.g., unit interval $[0,1]$) so that any pre-specified estimation precision of $\mathrm{I}(h)$ would be achieved. For example, we have used $n=10000$ to calculate the index values with the four-digit precision reported in Figure \ref{fig-quartet}. We refer to Chen and Zitikis (2017) for details on computational precision.

The following proposition, which is a special case of Lemma \ref{le-a2} below, establishes the convergence rate based on the level of smoothness of the function $h$.

\begin{proposition}
\label{prop-a3}
Let $h$ be a differentiable function defined on the unit interval $[0,1]$, and let its derivative $h'$ be $\gamma$-H\"{o}lder continuous for some $\gamma\in (0,1]$. Then, when $n\to \infty $,  we have
\begin{equation}\label{partition-1a}
\sum_{j=2}^n \ell \Big (h(t_{i,n})-h(t_{i-1,n})\Big )
= \int_0^{1} \ell \big ( h' \big )\text{d}\lambda + O(n^{-\gamma})
\end{equation}
for any positively homogeneous and Lipschitz function $\ell$ (e.g., $\ell(t)=t_{+}$ and $\ell(t)=|t|$). Consequently,
\begin{equation}\label{conv-rate}
\mathrm{I}_n(h)=\mathrm{I}(h)+O(n^{-\gamma}).
\end{equation}
\end{proposition}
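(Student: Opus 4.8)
The plan is to turn the left-hand side of~(\ref{partition-1a}) into a Riemann sum for $\int_0^1 \ell(h')\,\text{d}\lambda$ and then bound the quadrature error using the two regularity hypotheses. Since $h$ is differentiable, the mean value theorem supplies, for each $i\in\{2,\dots,n\}$, a point $\xi_{i,n}\in(t_{i-1,n},t_{i,n})$ with
\[
h(t_{i,n})-h(t_{i-1,n})=h'(\xi_{i,n})\,(t_{i,n}-t_{i-1,n})=\frac{h'(\xi_{i,n})}{\,n-1\,}.
\]
Positive homogeneity of $\ell$ (that is, $\ell(c\,x)=c\,\ell(x)$ for $c>0$), applied with the positive scalar $c=1/(n-1)$, then gives $\ell\big(h(t_{i,n})-h(t_{i-1,n})\big)=\tfrac{1}{n-1}\,\ell\big(h'(\xi_{i,n})\big)$, so the sum in~(\ref{partition-1a}) equals the Riemann sum $\tfrac{1}{n-1}\sum_{i=2}^{n}\ell\big(h'(\xi_{i,n})\big)$ over the uniform partition with tags $\xi_{i,n}$.

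For the error, write $\int_0^1 \ell(h'(t))\,\text{d}t=\sum_{i=2}^n\int_{t_{i-1,n}}^{t_{i,n}}\ell(h'(t))\,\text{d}t$, so that the difference between the sum and the integral is $\sum_{i=2}^n\int_{t_{i-1,n}}^{t_{i,n}}\big(\ell(h'(\xi_{i,n}))-\ell(h'(t))\big)\,\text{d}t$. If $L$ denotes a Lipschitz constant of $\ell$ and $C$ a $\gamma$-H\"older constant of $h'$, then for $t$ in the $i$-th subinterval one has $|\xi_{i,n}-t|\le 1/(n-1)$ and hence
\[
\big|\ell(h'(\xi_{i,n}))-\ell(h'(t))\big|\le L\,\big|h'(\xi_{i,n})-h'(t)\big|\le L\,C\,(n-1)^{-\gamma}.
\]
Integrating over each subinterval and summing — the subintervals have total length $1$ — bounds the error by $L\,C\,(n-1)^{-\gamma}=O(n^{-\gamma})$, which is exactly~(\ref{partition-1a}). (Two easy points are used here: $\ell(0)=0$, forced by positive homogeneity, and the boundedness of $h'$ on the compact interval $[0,1]$, so that every term is finite.)

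To deduce~(\ref{conv-rate}), apply~(\ref{partition-1a}) with $\ell(t)=t_+$ for the numerator of $\mathrm{I}_n(h)$ and with $\ell(t)=|t|$ for its denominator; both functions are positively homogeneous and $1$-Lipschitz. Writing $A=\int_0^1 (h')_+\,\text{d}\lambda$ and $B=\int_0^1 |h'|\,\text{d}\lambda$, so that $\mathrm{I}(h)=A/B$, we obtain $\mathrm{I}_n(h)=(A+a_n)/(B+b_n)$ with $a_n,b_n=O(n^{-\gamma})$. As long as $h$ is not constant, $B>0$, hence $B+b_n$ is bounded away from $0$ for large $n$, and the identity $\frac{A+a_n}{B+b_n}-\frac{A}{B}=\frac{a_nB-Ab_n}{B(B+b_n)}$ shows the difference is $O(|a_n|+|b_n|)=O(n^{-\gamma})$. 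There is no deep obstacle in this argument; the step that needs the most care is the quadrature estimate, where one must correctly combine the Lipschitz modulus of $\ell$ with the H\"older modulus of $h'$ on subintervals of length $1/(n-1)$, and one should keep in mind the (implicit) non-degeneracy $\int_0^1 |h'|\,\text{d}\lambda>0$ that makes both the ratio $\mathrm{I}(h)$ and the final division meaningful.
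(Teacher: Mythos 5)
Your argument is correct and is essentially the paper's own proof of Lemma \ref{le-a2} specialized to the uniform partition: both use the mean value theorem together with positive homogeneity to rewrite each increment as $(t_{i,n}-t_{i-1,n})\,\ell(h'(\xi_{i,n}))$, and then the Lipschitz property of $\ell$ composed with the $\gamma$-H\"older modulus of $h'$ to bound the quadrature error by $O(n^{-\gamma})$. Your explicit remark that the passage to \eqref{conv-rate} needs the non-degeneracy $\int_0^1|h'|\,\mathrm{d}\lambda>0$ is a worthwhile point that the paper leaves implicit.
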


To explain the basic meaning of the index $\mathrm{I}(h)$, we start with an un-normalized version of it, which we denote by $\mathrm{J}(h)$. Namely, let $ \mathcal{F}$ denote the set of all absolutely continuous functions $f$ on the interval $[0,1]$ such that $f(0)=0$. Denote the total variation
of $f\in \mathcal{F}$ on the interval $[0,1]$ by $\Vert f \Vert $, that is, $\Vert f \Vert =\int_0^1 |f'|\mathrm{d}\lambda $. Furthermore, by definition, we have $(f')_{+}=\max \{f' , 0\} $ and $(f')_{-}=\max \{ -f', 0\}$, and we also have the equations $f'=(f')_{+}-(f')_{-}$ and  $|f'|=(f')_{+}+(f')_{-}$. Finally, we use $\mathcal{F}^{-} $ to denote the set of all the functions $f\in \mathcal{F}$ that are non-increasing. All of these are of course well-known fundamental notions of Real Analysis (e.g., Kolmogorov and Fomin, 1970; Dunford and Schwartz,1988; and Natanson, 2016).

For any function $h\in \mathcal{F}$, we define its (un-normalized) index of increase $\mathrm{J}(h)$ as the distance between $h$ and the set $\mathcal{F}^{-} $, that is,
\begin{equation}
\mathrm{J}(h)
=\inf_{f\in \mathcal{F}^{-} }  \Vert h-f\Vert .
\label{def-00}
\end{equation}
Obviously, if $h$ is non-increasing, then $\mathrm{J}(h)=0$, and the larger the value of $\mathrm{J}(h)$, the farther the function $h$ is from being non-increasing on the interval $[0,1]$. Determining the index $\mathrm{J}(h)$ using its definition (\ref{def-00}) is not, however, a straightforward task, and to facilitate it, we next establish a very convenient integral representation of $\mathrm{J}(h)$.

\begin{theorem}[Davydov and Zitikis, 2017]\label{th-a1}
The infimum in definition (\ref{def-00}) is attained at any function
$f_1\in \mathcal{F}^{-}$ such that $f_1'=-(h')_{-}$, and thus
\begin{equation}\label{loi-2}
\mathrm{J}(h)=\int_0^1 (h')_{+}\mathrm{d}\lambda .
\end{equation}
\end{theorem}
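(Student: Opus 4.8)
The plan is to prove the two claims in Theorem~\ref{th-a1} — that the infimum is attained at any $f_1 \in \mathcal{F}^{-}$ with $f_1' = -(h')_{-}$, and that the resulting value of the infimum is $\int_0^1 (h')_{+}\,\mathrm{d}\lambda$ — essentially in one stroke, by (i) exhibiting a candidate minimizer, (ii) computing the value of $\|h - f_1\|$ at this candidate, and (iii) showing no $f \in \mathcal{F}^{-}$ can do better. First I would check the candidate is legitimate: since $h \in \mathcal{F}$ is absolutely continuous, $h' \in L^1[0,1]$, hence $(h')_{-} \in L^1[0,1]$, and the function $f_1(t) := -\int_0^t (h'(s))_{-}\,\mathrm{d}s$ is absolutely continuous, satisfies $f_1(0) = 0$, and has $f_1' = -(h')_{-} \le 0$ a.e., so $f_1 \in \mathcal{F}^{-}$. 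For the upper bound, using $h' - f_1' = (h')_{+} - (h')_{-} - (-(h')_{-}) = (h')_{+}$ and the fact that $(h')_{+} \ge 0$, we get immediately
\begin{equation*}
\|h - f_1\| = \int_0^1 |h' - f_1'|\,\mathrm{d}\lambda = \int_0^1 (h')_{+}\,\mathrm{d}\lambda,
\end{equation*}
so $\mathrm{J}(h) \le \int_0^1 (h')_{+}\,\mathrm{d}\lambda$.

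For the lower bound — the step I expect to carry the real content — I would take an arbitrary $f \in \mathcal{F}^{-}$ and bound $\|h - f\|$ from below pointwise in the integrand. The key observation is that on the set $\{h' > 0\}$ we have $h' - f' \ge h' > 0$ because $f' \le 0$ a.e., so $|h' - f'| \ge (h')_{+}$ a.e. on that set, while on the complement $(h')_{+} = 0 \le |h' - f'|$ trivially. Hence $|h' - f'| \ge (h')_{+}$ a.e. on $[0,1]$, and integrating gives $\|h - f\| = \int_0^1 |h' - f'|\,\mathrm{d}\lambda \ge \int_0^1 (h')_{+}\,\mathrm{d}\lambda$ for every $f \in \mathcal{F}^{-}$. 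Taking the infimum over $f \in \mathcal{F}^{-}$ yields $\mathrm{J}(h) \ge \int_0^1 (h')_{+}\,\mathrm{d}\lambda$.

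Combining the two bounds gives $\mathrm{J}(h) = \int_0^1 (h')_{+}\,\mathrm{d}\lambda$, and since the value $\int_0^1 (h')_{+}\,\mathrm{d}\lambda$ is attained precisely at $f_1$ (and at any other $f \in \mathcal{F}^{-}$ with $f' = -(h')_{-}$ a.e., since such $f$ differ from $f_1$ by an additive constant, which is $0$ by the constraint $f(0)=0$), the infimum is attained there. The only subtlety worth stating carefully is the a.e.\ manipulation of $(h')_{+}$, $(h')_{-}$ and the decompositions $h' = (h')_{+} - (h')_{-}$, $|h'| = (h')_{+} + (h')_{-}$ on sets where signs are controlled; these are routine once one works at the level of derivatives in $L^1$ and invokes the fundamental theorem of calculus for absolutely continuous functions. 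I do not anticipate a genuine obstacle — the argument is a clean pointwise domination plus an explicit attaining function — so the proof is short; the main thing is to keep the measure-theoretic bookkeeping tidy.
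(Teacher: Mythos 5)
Your proof is correct and follows essentially the same route as the paper: the same explicit minimizer $f_1$ with $h'-f_1'=(h')_{+}$ for the upper bound, and the same key observation that on $\{h'>0\}$ the constraint $f'\le 0$ forces $|h'-f'|\ge h'$ for the lower bound. The only (cosmetic) difference is that you package the lower bound as a pointwise a.e.\ domination $|h'-f'|\ge (h')_{+}$, whereas the paper splits the integral over $T^{+}=\{h'>0\}$ and its complement and discards the second piece.
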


A direct proof of this theorem was not provided by Davydov and Zitikis (2017), who refer to a more general and abstract result. Nevertheless, a short and enlightening proof exists, and we present it next.

\begin{proof}[Proof of Theorem \ref{th-a1}]
We start with the note that the bound $\mathrm{J}(h)\le \Vert h - f \Vert $ holds for every
function $f\in \mathcal{F}^{-}$, and in particular for the function
$f_1$ specified in the formulation of the theorem. Hence,
\begin{align}
\mathrm{J}(h)
&\le \int_0^1 |h'-f_1'|\mathrm{d}\lambda
\notag
\\
&= \int_0^1 |h'+(h')_{-}|\mathrm{d}\lambda
\notag
\\
&= \int_0^1 (h')_{+}\mathrm{d}\lambda .
\label{u-3}
\end{align}
It now remains to show the opposite bound. Let $T^{+}$ be the set of all
$t\in [0,1]$ such that $h'(t)> 0$, and let  $T^{-}$ be the complement of
the set  $T^{+}$, which consists of all those $t\in [0,1]$ for which
$h'(t)\le 0$. Then
\begin{align}
\mathrm{J}(h)
&=\inf_{f\in \mathcal{F}^{-}} \bigg ( \int_{T^{+}} |h'-f'|\mathrm{d}\lambda
+\int_{T^{-}} |h'-f'|\mathrm{d}\lambda  \bigg )
\notag
\\
&\ge \inf_{f\in \mathcal{F}^{-}} \int_{T^{+}} |h'-f'|\mathrm{d}\lambda
\notag
\\
&= \inf_{f\in \mathcal{F}^{-}} \bigg (\int_{T^{+}} h'\mathrm{d}\lambda
+ \int_{T^{+}} |f'|\mathrm{d}\lambda \bigg )
\notag
\\
&= \int_0^1 (h')_{+}\mathrm{d}\lambda ,
\label{d-3}
\end{align}
where the last equation holds when $f'(t)=0$ for all $t\in T^{+}$, that is, when $f'=-(h')_{-}$.  Bounds (\ref{u-3}) and (\ref{d-3}) establish equation (\ref{loi-2}), thus finishing the proof of Theorem \ref{th-a1}.
\end{proof}

The index $\mathrm{J}(h)$ never exceeds $\Vert h\Vert $, and so the normalized version of $\mathrm{J}(h)$ is
\[
\mathrm{I}(h):=\mathrm{J}(h)/\Vert h\Vert,
\]
which is exactly the index of increase given by equation (\ref{index-norm}). In summary, the index of increase $\mathrm{I}(h)$ is the normalized distance of the function $h$ from the set $\mathcal{F}^{-}$ of all non-increasing functions on the interval $[0,1]$: we have  $\mathrm{I}(h)=0$ when the function $h$ is non-increasing, and $\mathrm{I}(h)=1$ when the function is non-decreasing. The closer the index $\mathrm{I}(h)$ is to $1$, the more (we say) the function $h$ is increasing, and the closer it is to $0$, the less (we say) the function $h$ is increasing or, equivalently, the more it is decreasing.

\section{Practical issues and their resolution}
\label{practice}

Measurements are usually taken with errors, whose natural model is some distribution (e.g., normal) with mean $0$ and finite variance $\sigma^2$. In other words, the numerical index $\mathrm{I}_n(h)$ turns into the random index of increase
\begin{equation}\label{approx-random}
\mathrm{I}_n(h,\varepsilon):={\sum_{i=2}^n (Y_{i,n}-Y_{i-1,n})_{+} \over \sum_{i=2}^n |Y_{i,n}-Y_{i-1,n}| },
\end{equation}
where, for $i=1,\dots, n$,
\begin{equation}\label{approx-random-e}
Y_{i,n}=h(t_{i,n})+\varepsilon_i .
\end{equation}

Right at the outset, however, serious issues arise. To illustrate them in a speedy and transparent manner, we put aside mathematics such as in Davydov and Zitikis (2004, 2007) and, instead, simulate $n=10000$ standard normal errors $\varepsilon_i$, thus obtaining four sequences $Y_{i,n}$ corresponding to the functions of quartet (\ref{fig-quartet}).  Then we calculate the corresponding indices of increase using formula (\ref{approx-random}). All of the obtained values of $\mathrm{I}_n(h)$ are virtually equal to $1/2$ (see Figure \ref{fig-quartet-error}).
\begin{figure}[h!]
\centering
  \subfigure[$\mathrm{I}(h_{1})= 0.6667$, $\mathrm{I}_n(h_1,\varepsilon)= 0.5$]{%
    \includegraphics[height=0.35\textwidth]{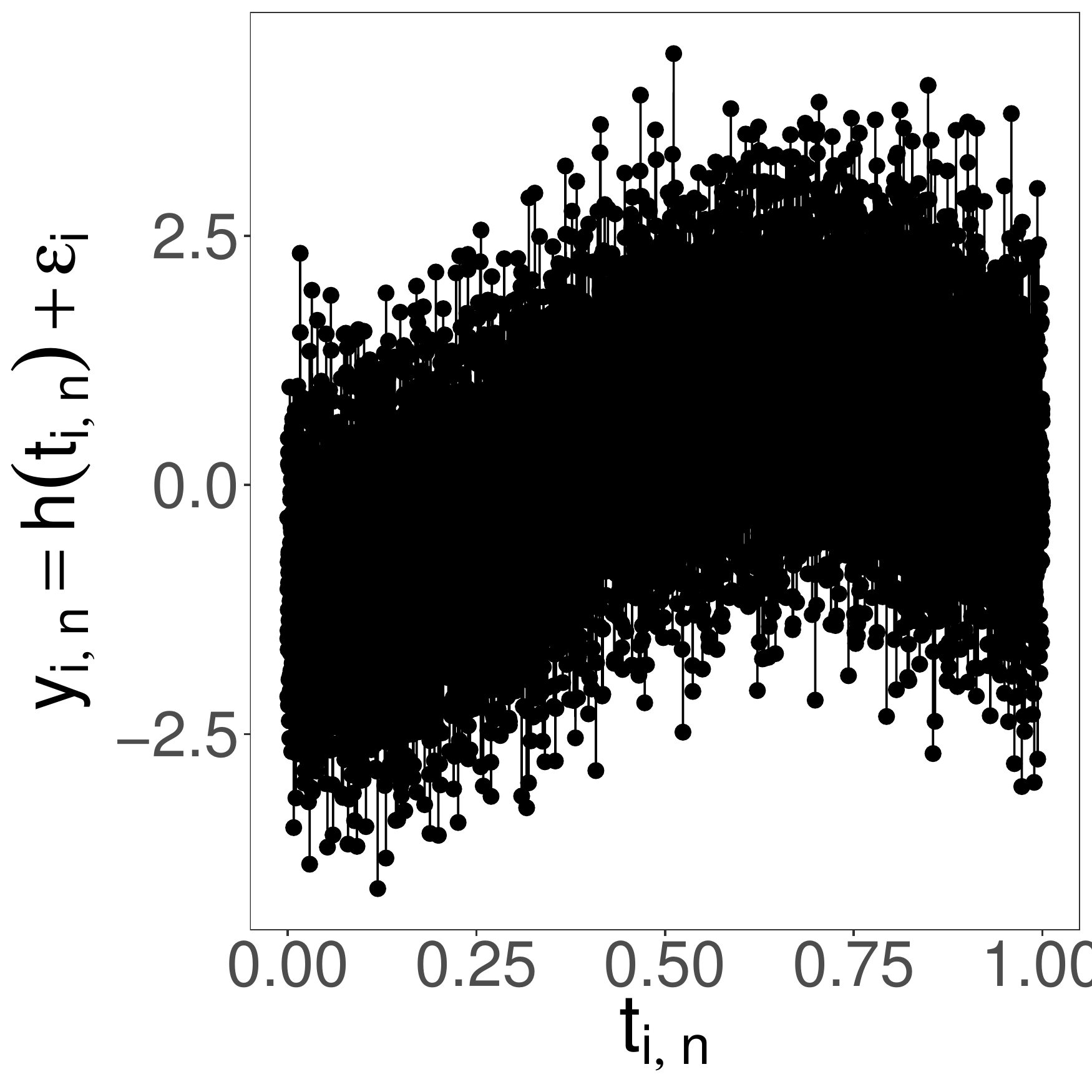}}%
\hspace{0.5in}
\subfigure[$\mathrm{I}(h_{2})= 0.3333$, $\mathrm{I}_n(h_2,\varepsilon)= 0.5$]{%
    \includegraphics[height=0.35\textwidth]{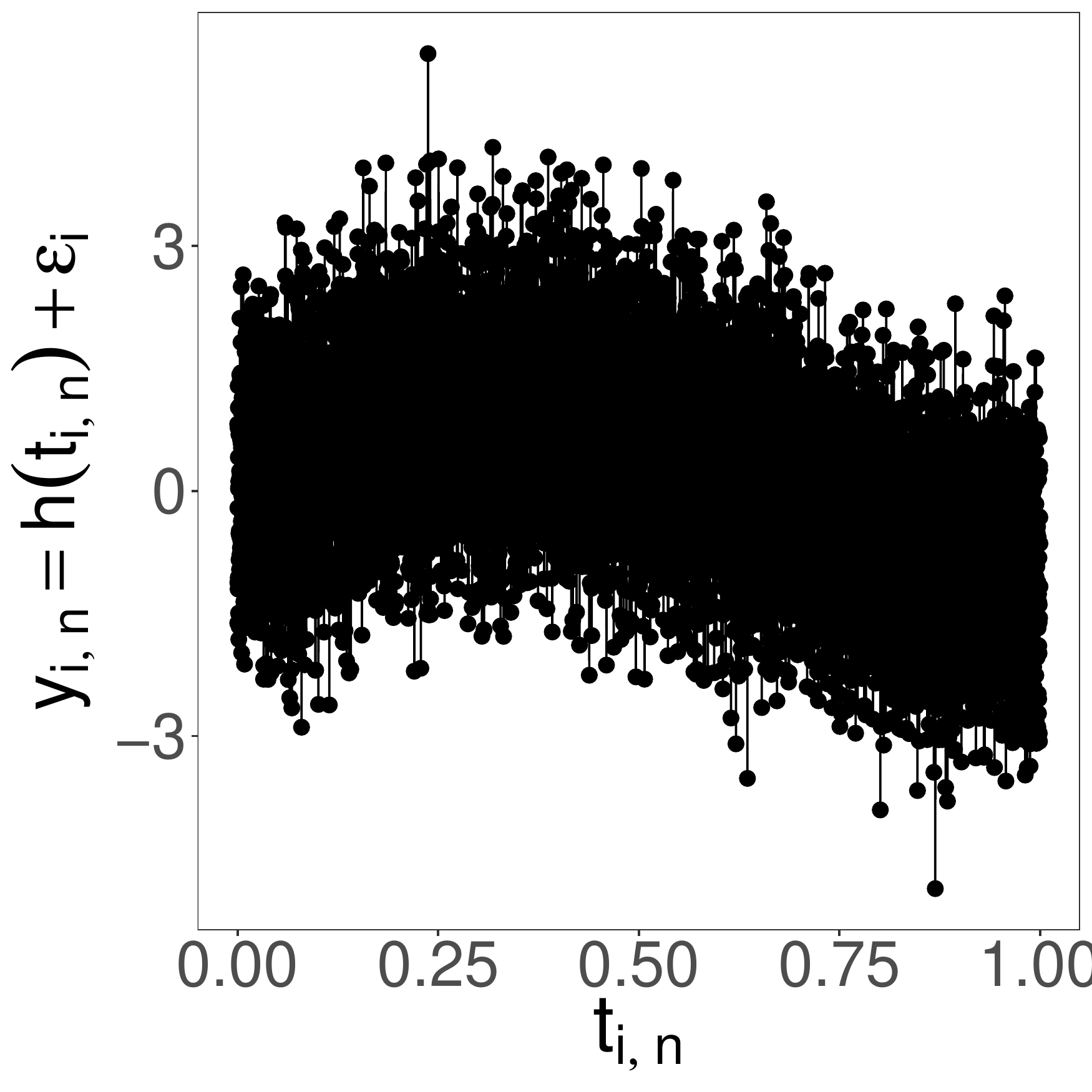}}%
\hspace{0.5in}
 \subfigure[$\mathrm{I}(h_{3})= 1$, $\mathrm{I}_n(h_3,\varepsilon)= 0.5$]{%
    \includegraphics[height=0.35\textwidth]{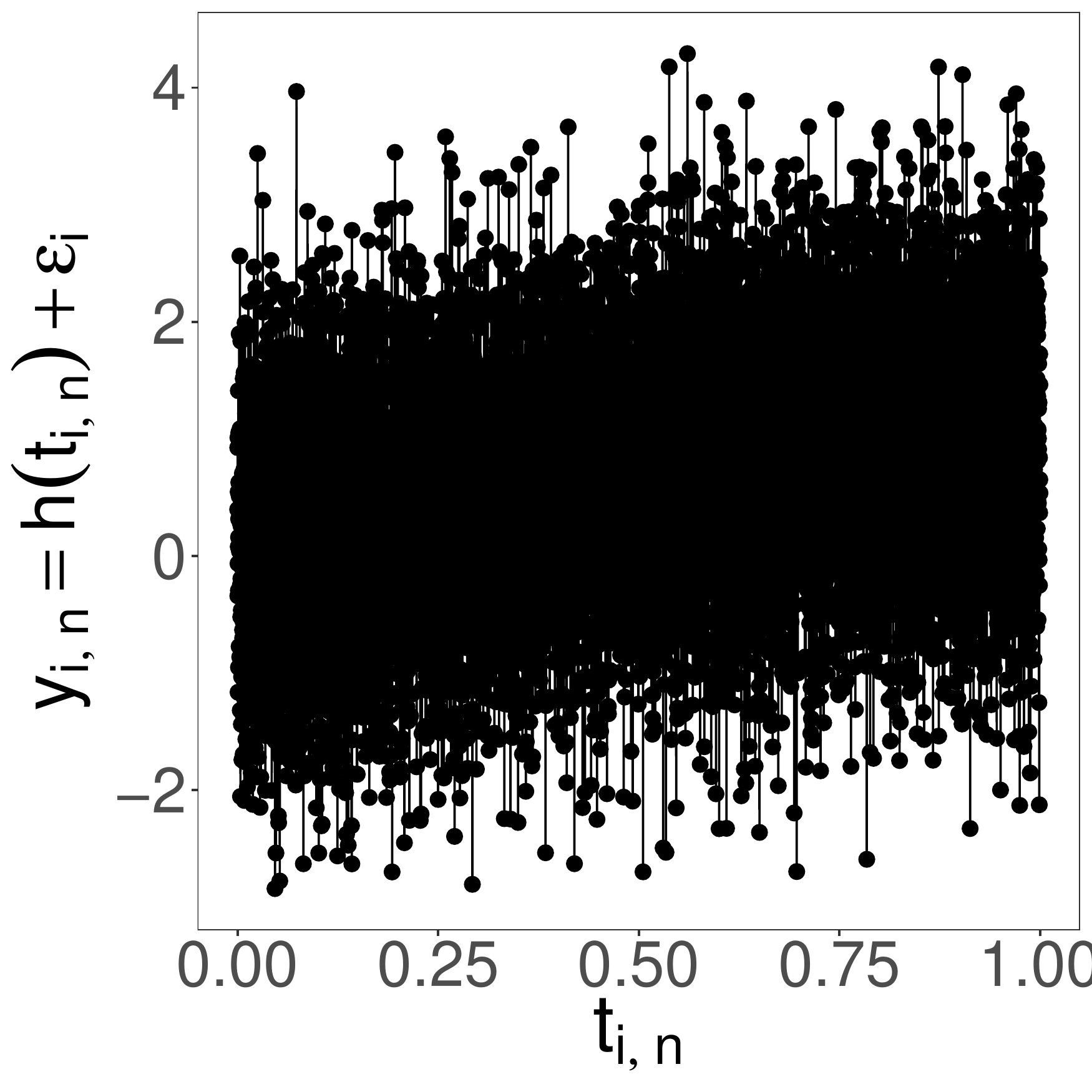}}%
\hspace{0.5in}
\subfigure[$\mathrm{I}(h_{4})= 0$, $\mathrm{I}_n(h_4,\varepsilon)= 0.5$]{%
    \includegraphics[height=0.35\textwidth]{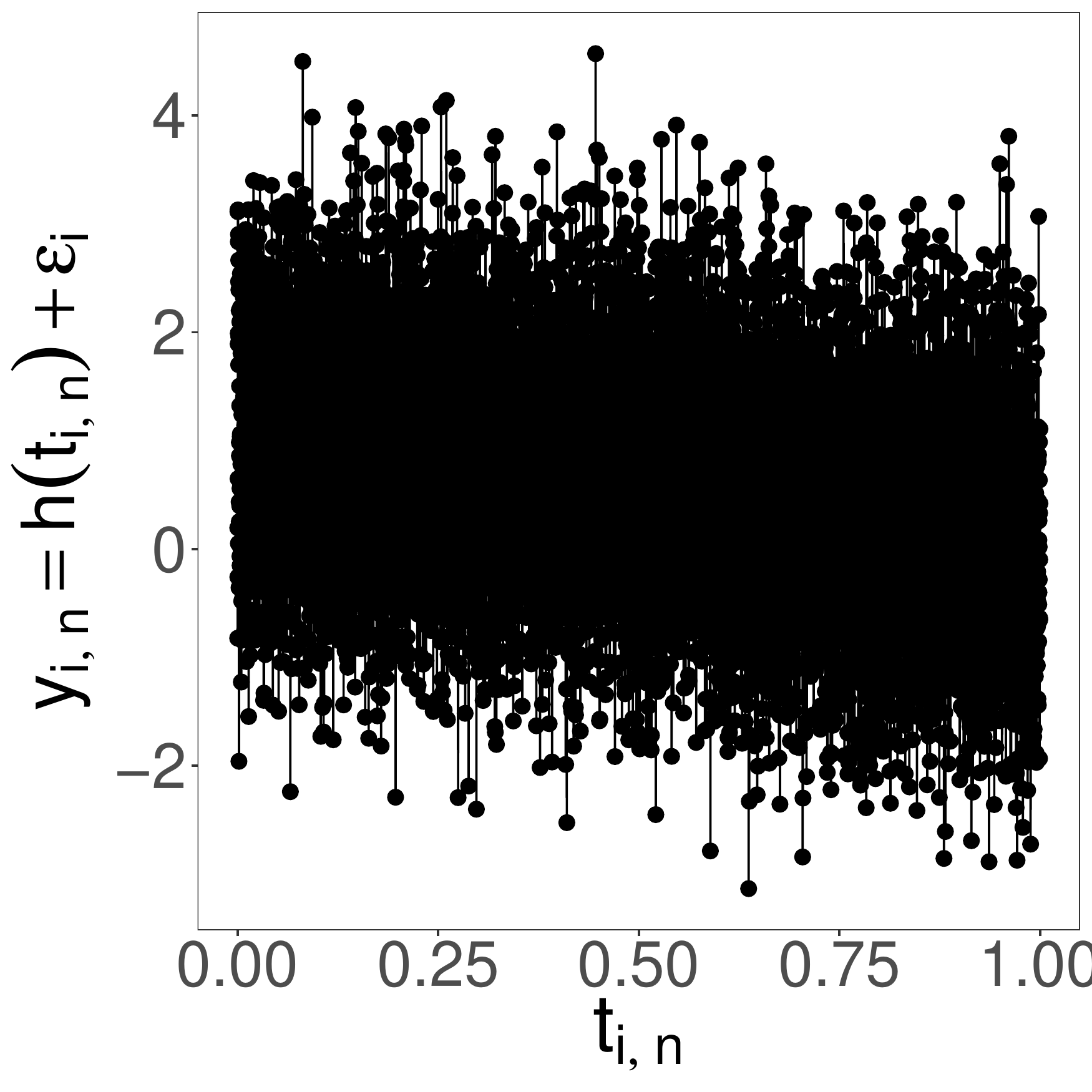}}%
\caption{The indices of increase and their numerical estimators for quartet (\ref{quartet-1}) with added random errors.}
\label{fig-quartet-error}
\end{figure}
Clearly, there is something amiss.

It is not, however, hard to understand the situation: when all $\varepsilon_i$'s are zero, the definition of the integral as the limit of the Riemann sums works as intended, but when the $\varepsilon_i$'s are not zero, they accumulate so much when $n$ gets larger that the deterministic part (i.e., the Riemann sum) gets hardly, if at all, visible (compare Figures \ref{fig-quartet} and \ref{fig-quartet-error}). In summary, we are facing two extremes:
\begin{itemize}
  \item If the model is purely deterministic in the sense that there are no measurement errors, which we can understandably argue to be outside the realm of practice, then the more frequently we observe the function $h$, the more precisely we can estimate its index of increase.
  \item If, however, there are measurement errors, as they usually are in practice, then the more frequently we observe the function, the less precisely we can estimate   its index of increase, because the accumulated measurement errors obscure the deterministic part.
\end{itemize}
Neither of the two extremes can be of much interest, or use, for reasons either practical or computational. The purpose of this paper is to offer a way out of this difficulty by showing how to strike a good balance between determinism and randomness inherent in the problem.

We next present an intuitive consideration that will guide our subsequent mathematical considerations, and it will also hint at potential applications of this research. Namely, suppose that the unit interval $[0,1]$ represents an one-day observation period, and let an observation be taken (e.g., by a measuring equipment) every second. Hence, in total, we have $n=86400$ observations $Y_{i,n}$ of the (unknown) function $h$, and they are prone to measurement errors $\varepsilon_i$ as in expression (\ref{approx-random-e}). For the sake of argument, let $\varepsilon_i$'s be i.i.d.~standard normal. If we calculate the index $\mathrm{I}_n(h,\varepsilon)$ based on these data, we already know the problem: $\mathrm{I}_n(h,\varepsilon)$ tends to $ 1/2$ when $n\to \infty $. To diminish the influence of these errors, we average the observed values:
\begin{align*}
{1\over n}\sum_{i=1}^n Y_{i,n}
&={1\over n}\sum_{i=1}^n h(t_{i,n})+{1\over n}\sum_{i=1}^n \varepsilon_i
\\
&\stackrel{d}{\approx} \int_0^1 h\text{d}\lambda +{1\over \sqrt{n}}\varepsilon_0,
\end{align*}
where $\stackrel{d}{\approx}$ means  `approximately in distribution,' and $\varepsilon_0 $ follows the standard normal distribution. However, in the process of averaging out the errors, we have inevitably also averaged the deterministic part and arrived at the mean value $\int_0^1 h\text{d}\lambda$ of the function $h$. This value has very little to do with the index $\mathrm{I}(h)$, which fundamentally relies on the derivative $h'$. In short, we have clearly over-averaged the observations $Y_{i,n}$: having maximally reduced the influence of measurement errors, we have obscured the function $h$ so much that the estimation of $\mathrm{I}(h)$ has become impossible. Clearly, we need to adopt a more tempered approach.

Hence, we group the observations into only $M<n$ groups $G_{j,n}$, $j=1,\dots , M$, whose cardinalities $N:=\#(G_{j,n})$ we assume to be the same for all $j=1,\dots , M$. It is convenient to re-parametrize these choices using parameter $\alpha\in (0,1) $, which turns $M$ and $N$ into
\[
M=\lfloor n^{\alpha}\rfloor \quad \text{and} \quad N=\lfloor n^{1-\alpha}\rfloor .
\]
This re-parametrization is not artificial. It is, in a way, connected to smoothing histograms and estimating regression functions, and in particular to bandwidth selection in these research areas. We shall elaborate on this topic more in the next section. At the moment, we only note that the aforementioned connection plays a pivotal role in obtaining practically useful and sound estimates of the parameter $\alpha $.

To gain additional intuition on the grouping parameter $\alpha $, we come back for a moment to our numerical example with the one-day observation period, which is comprised of $n=86400$ observations, one per second. Suppose that we decide to average the sixty observations within each minute. Thus, we have $N=60$ and in this way produce $M=1440$ new data points, which we denote by $\widetilde{Y}_{j,n}$. Since $NM=n$, we have $\alpha=1-\log(N)/\log(n)$ and thus $\alpha =0.6398$. If, however, instead of averaging minute-worth data we decide to average, for example, hour-worth data, then we have $N=360$ (=group cardinality), $M=240$ (=number of groups), and thus $\alpha=0.4822$.

Continuing our general discussion, we average the original observations $Y_{i,n}$, $i=1\dots , n$, falling into each group $G_{j,n}$ and in this way obtain $M$ group-averages
\[
\widetilde{Y}_{j,n}:={1\over N}\sum_{i\in G_{j,n}} Y_{i,n}, \quad  j=1,\dots, M.
\]
Based on these averages, we modify the earlier introduced index $\mathrm{I}_n(h,\varepsilon)$ as follows:
\begin{equation}\label{approx-random-grouped}
\widetilde{\mathrm{I}}_{n,\alpha}(h,\varepsilon):={\sum_{j=2}^M (\widetilde{Y}_{j,n}-\widetilde{Y}_{j-1,n})_{+} \over \sum_{j=2}^M |\widetilde{Y}_{j,n}-\widetilde{Y}_{j-1,n}| }.
\end{equation}
The problem that we now face is to find, if exist, those values of $\alpha\in (0,1)$ that make the index $\widetilde{\mathrm{I}}_{n,\alpha}(h,\varepsilon)$ converge to $ \mathrm{I}(h)$ when $n\to \infty $. This is the topic of the next section.

\section{Consistency}
\label{consistency}

The following theorem establishes consistency of the estimator $\widetilde{\mathrm{I}}_{n,\alpha}(h,\varepsilon)$ and, in particular, specifies the range of possible $\alpha$ values.

\begin{theorem}
\label{th-1}
Let $h$ be a differentiable function defined on the unit interval $[0,1]$, and let its derivative $h'$ be $\gamma$-H\"{o}lder continuous for some $\gamma\in (0,1]$. If $\alpha \in (0, 1/3)$, then  $\widetilde{\mathrm{I}}_{n,\alpha}(h,\varepsilon)$ is a consistent estimator of $ \mathrm{I}(h)$, that is, when $n\to \infty$, we have
\begin{equation}\label{consistency-0}
\widetilde{\mathrm{I}}_{n,\alpha}(h,\varepsilon)\stackrel{\mathbf{P}}{\to} \mathrm{I}(h).
\end{equation}
The rate of convergence is of the order
\begin{equation}\label{consistency-1}
O_{\mathbf{P}}(1) n^{-\min\{\delta(\alpha ), \rho(\alpha ) \}}
\end{equation}
with  $\delta(\alpha )=\alpha \gamma$ arising from the deterministic part of the problem, that is, associated with the function $h$, and $\rho(\alpha )= (1-3\alpha)/2 $ arising from the random part, that is, associated with the measurement errors $\varepsilon_i$'s.
\end{theorem}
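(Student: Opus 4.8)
The plan is to decompose the group‑averaged increments $\widetilde{Y}_{j,n}-\widetilde{Y}_{j-1,n}$ into a deterministic part and a random part and to control each separately. Writing $\widetilde h_{j,n}:=N^{-1}\sum_{i\in G_{j,n}}h(t_{i,n})$ and $\widetilde\varepsilon_{j,n}:=N^{-1}\sum_{i\in G_{j,n}}\varepsilon_i$, we have $\widetilde{Y}_{j,n}=\widetilde h_{j,n}+\widetilde\varepsilon_{j,n}$. First I would observe that the group $G_{j,n}$ spans an interval of length $\approx N/n = n^{-\alpha}$, so $\widetilde h_{j,n}$ is a Riemann‑type average of $h$ over that subinterval; hence $\widetilde h_{j,n}\approx h(s_{j})$ for $s_j$ the midpoint of the $j$‑th block, up to an error governed by the modulus of continuity of $h$. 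More importantly, the deterministic increments satisfy $\widetilde h_{j,n}-\widetilde h_{j-1,n}= \int$ of $h'$ over the relevant block length, so that, after multiplying by $M\approx n^{\alpha}$, the sums $\sum_{j=2}^M \ell(\widetilde h_{j,n}-\widetilde h_{j-1,n})$ (properly rescaled, using positive homogeneity of $\ell$) reproduce $\int_0^1\ell(h')\,\mathrm d\lambda$ with an error of order $O(n^{-\alpha\gamma})$: this is exactly Proposition~\ref{prop-a3}/Lemma~\ref{le-a2} applied on the coarse grid of $M\approx n^{\alpha}$ points, whose mesh is $n^{-\alpha}$, which is why $\delta(\alpha)=\alpha\gamma$.

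Next I would handle the random part. The key quantity is $\widetilde\varepsilon_{j,n}-\widetilde\varepsilon_{j-1,n}$, a difference of two independent averages of $N$ i.i.d.\ mean‑zero errors, hence mean zero with variance $2\sigma^2/N\approx 2\sigma^2 n^{-(1-\alpha)}$, so its typical size is $n^{-(1-\alpha)/2}$. Compare this with the typical size of the deterministic increment $\widetilde h_{j,n}-\widetilde h_{j-1,n}$, which is of order (block length) $=n^{-\alpha}$. For the random perturbation to be negligible relative to the deterministic signal we need $n^{-(1-\alpha)/2}$ to be small compared to $n^{-\alpha}$ \emph{after} accounting for the number of terms: summing $M\approx n^{\alpha}$ increments, the deterministic sum $\sum_j|\widetilde h_{j,n}-\widetilde h_{j-1,n}|$ stays of order $1$ (it converges to $\int_0^1|h'|\,\mathrm d\lambda$), whereas the random contribution $\sum_j|\widetilde\varepsilon_{j,n}-\widetilde\varepsilon_{j-1,n}|$ is of order $M\cdot n^{-(1-\alpha)/2}=n^{\alpha-(1-\alpha)/2}=n^{(3\alpha-1)/2}$, which tends to $0$ precisely when $\alpha<1/3$, with rate $n^{-(1-3\alpha)/2}=n^{-\rho(\alpha)}$. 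Making this rigorous requires a Lipschitz/ triangle‑inequality estimate of the form $|\ell(u+v)-\ell(u)|\le L|v|$ applied termwise with $u=\widetilde h_{j,n}-\widetilde h_{j-1,n}$ and $v=\widetilde\varepsilon_{j,n}-\widetilde\varepsilon_{j-1,n}$, then summing and taking expectations (or using Markov's inequality after bounding the expected $\ell^1$‑norm of the random increments via Cauchy--Schwarz, $\mathbf E|\widetilde\varepsilon_{j,n}-\widetilde\varepsilon_{j-1,n}|\le (2\sigma^2/N)^{1/2}$).

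Combining the two, both numerator and denominator of $\widetilde{\mathrm I}_{n,\alpha}(h,\varepsilon)$ equal their deterministic counterparts (which converge to $\mathrm J(h)$ and $\Vert h\Vert$ respectively with error $O(n^{-\alpha\gamma})$) plus a random perturbation of order $O_{\mathbf P}(n^{-(1-3\alpha)/2})$. Since $\Vert h\Vert>0$ (assuming $h$ is not constant — the degenerate case is trivial), a standard ratio estimate, $|A_1/B_1 - A_2/B_2|\le (|A_1-A_2| + |A_2/B_2|\,|B_1-B_2|)/|B_1|$, yields $\widetilde{\mathrm I}_{n,\alpha}(h,\varepsilon)=\mathrm I(h)+O_{\mathbf P}(1)\,n^{-\min\{\alpha\gamma,\,(1-3\alpha)/2\}}$, which is (\ref{consistency-1}); consistency (\ref{consistency-0}) follows since both exponents are strictly positive for $\alpha\in(0,1/3)$.

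The main obstacle I anticipate is the passage from ``typical size of a single random increment'' to a \emph{uniform} bound over all $j$ and a clean $O_{\mathbf P}$ statement for the whole sum: the blocks overlap in the sense that $\widetilde\varepsilon_{j,n}$ appears in both the $j$‑th and $(j+1)$‑th increment, so the increments are not independent, and one must either bound the expected $\ell^1$‑norm directly (which is fine, since expectation is linear and the triangle inequality suffices — no independence needed there) or, for a sharper statement, exploit the $1$‑dependence structure. I expect the expectation route to be enough for the stated rate, so the real care is in bookkeeping the floor functions in $M=\lfloor n^{\alpha}\rfloor$, $N=\lfloor n^{1-\alpha}\rfloor$ (so $NM\ne n$ exactly, leaving a leftover block of size $o(n)$ whose contribution must be shown negligible) and in verifying that the coarse‑grid application of Lemma~\ref{le-a2} is legitimate given that $\widetilde h_{j,n}$ is a block \emph{average} rather than a point value of $h$ — a discrepancy that is itself $O(n^{-\alpha\gamma})$ by H\"older continuity of $h'$ and hence absorbed into $\delta(\alpha)$.
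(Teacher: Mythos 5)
Your proposal is correct and follows essentially the same route as the paper's proof: the same decomposition of $\widetilde{Y}_{j,n}$ into a block-averaged deterministic part plus averaged noise, the same application of Lemma \ref{le-a2} on the coarse grid of $M\approx n^{\alpha}$ points to obtain the $O(n^{-\alpha\gamma})$ deterministic rate, the same termwise Lipschitz comparison followed by the $L^1$/Markov bound $M\sqrt{\sigma^2/N}=O(n^{-(1-3\alpha)/2})$ for the noise, and the same ratio argument at the end. The only cosmetic difference is that the paper converts each block average into an exact point value $h(t^*_{j,n})$ via the mean-value theorem (after a Riemann-sum step costing $O(n^{-1})$ per block), whereas you approximate it by the midpoint value and absorb the resulting discrepancy into the $O(n^{-\alpha\gamma})$ term.
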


We next discuss the choice of $\alpha $ from the theoretical and practical perspectives, which do not coincide due to a number of reasons, such as the fact that theory is concerned with asymptotics when $n\to \infty $, while practice deals with finite values of $n$, though possibly very large. Under the (practical) non-asymptotic framework, any value of $\alpha \in (0,1]$ is, in principle, acceptable because the quantities $O_{\mathbf{P}}(1)$ and $ n^{-\min\{\delta(\alpha ), \rho(\alpha ) \}}$ in the specification of convergence rate (\ref{consistency-1}) interact, as both of them depend on $h$ and $\alpha $.

Under the (theoretical) asymptotic framework, the values $\alpha=0$ and $1$ have to be discarded immediately, as we have already noted. The remaining $\alpha$'s should, as Theorem \ref{th-1} tells us, be further restricted to only those below $1/3$. Since we wish to chose $\alpha $ that results in the fastest rate of convergence, we maximize the function $\alpha \mapsto \min\{\delta(\alpha ), \rho(\alpha ) \}$ and get
\begin{equation}\label{max-alpha}
\alpha_{\max}={1\over 3+2\gamma }.
\end{equation}
For example, if the second derivative $h''(t)$ is uniformly bounded on the interval $[0,1]$, which is the case in all our illustrative examples, then $\gamma =1$ and thus $\alpha_{\max}=1/5$.

The grouping and averaging technique that we employ is closely related to smoothing in non-parametric density and regression estimation (e.g., Silverman, 1986; H\"{a}rdle, 1991; Scott, 2015; and references therein). To elaborate on this connection, we recall that the number of groups is $M\approx n^{\alpha }$, whose reciprocal
\begin{equation}\label{band=h}
b:=1/M\approx n^{-\alpha }
\end{equation}
would play the role of `bandwidth.' In non-parametric density and regression estimation, the optimal bandwidth is of the order $O( n^{-1/5})$ when $n\to \infty $, which in our case corresponds to $\alpha_{\max}=1/5$. Hence, $\alpha =0$ means only one bin/group and thus over-smoothing, whereas $\alpha =1$ means as many bins/groups as there are observations, and thus under-smoothing. Of course, as we have already noted above, the values $\alpha=0$ and $\alpha=1$ are excluded, unless all the measurement errors vanish, in which case smoothing is not necessary and thus $\alpha =1$ can be used, as we indeed did earlier when dealing with the numerical index $\mathrm{I}_n(h)$.

Thinking of the role of $\gamma$-H\"{o}lder continuity of $h'$ on the problem, it is useful to look at two extreme cases: First, when $\gamma =1$, we have $\alpha_{\max}=1/5$ from formula (\ref{max-alpha}), which corresponds (under weak conditions) to the optimal bandwidth $O( n^{-1/5})$ in non-parametric density and regression estimation. Second, when no smoothing is applied, like in the case of the histogram density-estimator, then  (under weak conditions) the optimal bandwidth is of the order $O( n^{-1/3})$, which corresponds to $\alpha_{\max}=1/3$ when $\gamma =0$, which essentially means boundedness but no continuity of $h'$.

Hence, choosing an appropriate value of the grouping parameter $\alpha $ is a delicate task. We next discuss two approaches: The first one is data-exploratory (visual) when we assume that we know the population and want to gain insights into what might happen in practice. The second, practice-oriented approach relies on the idea of cross-validation (e.g., Arlot and Celisse, 2010, Celisse, 2008; and references therein) and is designed to produce estimates of $\alpha $ based purely on data.

\subsection{Data exploratory (visual) choice of $\alpha $}

To gain intuition on how to estimate the grouping parameter $\alpha $ from data, we start out with the functions in quartet (\ref{quartet-1}), which we view as populations, and then we contaminate their observations with i.i.d.~errors $\varepsilon_i \sim \mathcal{N}(0,1)$ according to formula (\ref{approx-random-e}).

We have visualized the values of the estimator $\widetilde{\mathrm{I}}_{n,\alpha}(h,\varepsilon)$ with respect to $n$ and $\alpha$ in Figure~\ref{h1h2h3h4-group},
\begin{figure}[h!]
  \centering
  \subfigure[The hyperplane at the height $\mathrm{I}(h_{1})=0.6667$]{%
    \includegraphics[width=0.5\textwidth]{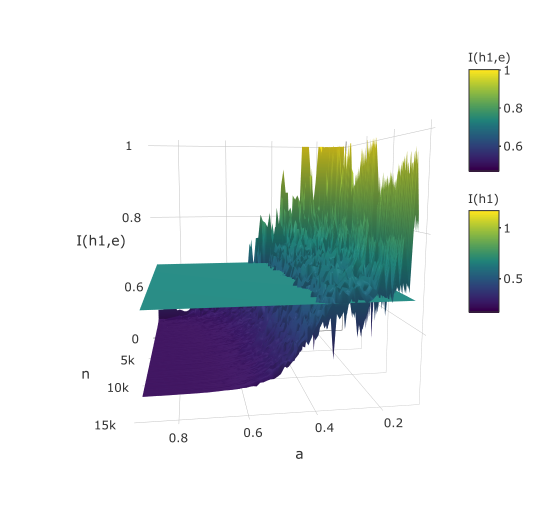}}%
\subfigure[The hyperplane at the height $\mathrm{I}(h_{2})=0.3333$]{%
    \includegraphics[width=0.5\textwidth]{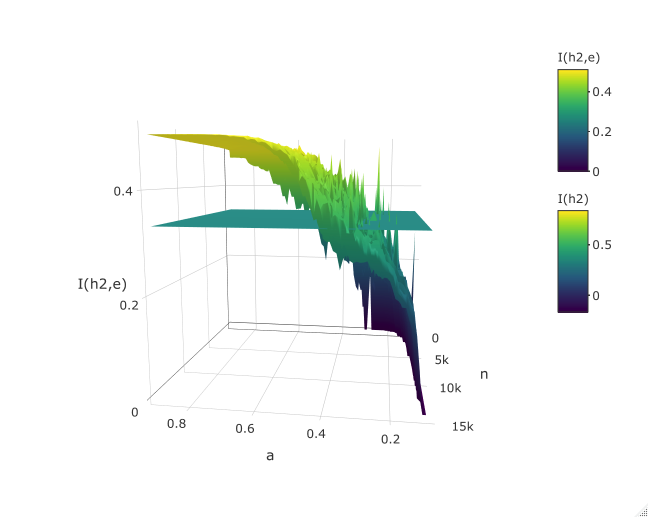}}%
    \\
 \subfigure[The hyperplane at the height $\mathrm{I}(h_{3})=1$]{%
    \includegraphics[width=0.5\textwidth]{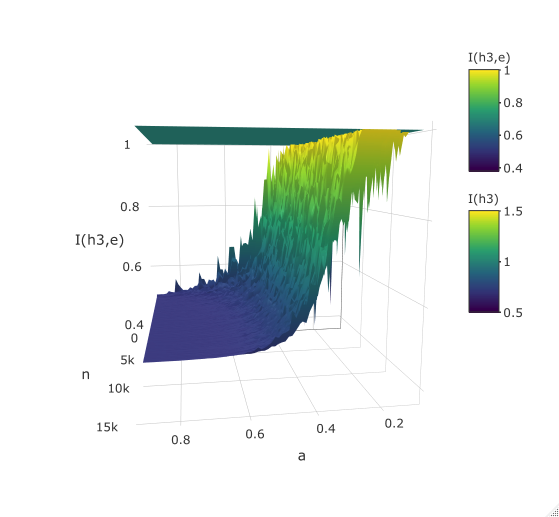}}%
\subfigure[The hyperplane at the height $\mathrm{I}(h_{4})=0$]{%
    \includegraphics[width=0.5\textwidth]{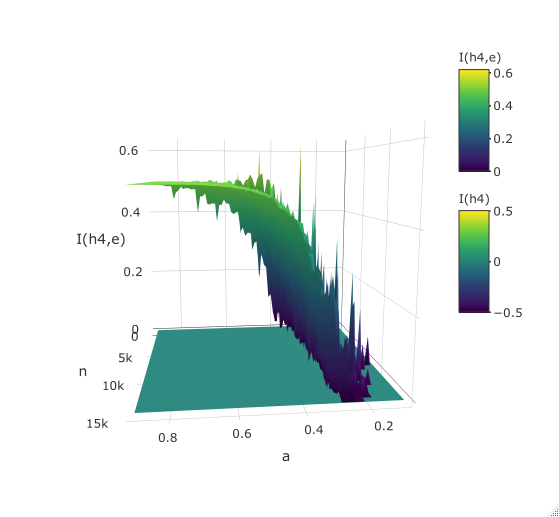}}%
    \caption{Values of $\widetilde{\mathrm{I}}_{n,\alpha}(h,\varepsilon)$ with respect to $n$ and $\alpha$ in the case of quartet (\ref{quartet-1}).}%
\label{h1h2h3h4-group}
\end{figure}	
where the hyperplane in each panel is at the height of the corresponding actual index of increase $\mathrm{I}(h)$. For each panel, we visually choose a value of $\alpha$ which is in the intersection of the curved surface with the hyperplane, because in this case the index $\widetilde{\mathrm{I}}_{n,\alpha}(h,\varepsilon)$ is close to the actual index $\mathrm{I}(h)$.

Even though the chosen parameter $\alpha$ value, which we denote by $\alpha_{\text{vi}}$, may not be optimal due to roughness of the surface, it nevertheless offers a sound choice, as we see from Figure~\ref{h1h2h3h4-group-fixed}
\begin{figure}[h!]
  \centering
  \subfigure[$\mathrm{I}(h_{1})=0.6667$, $\alpha_{\text{vi}}=0.35$]{%
    \includegraphics[height=0.35\textwidth]{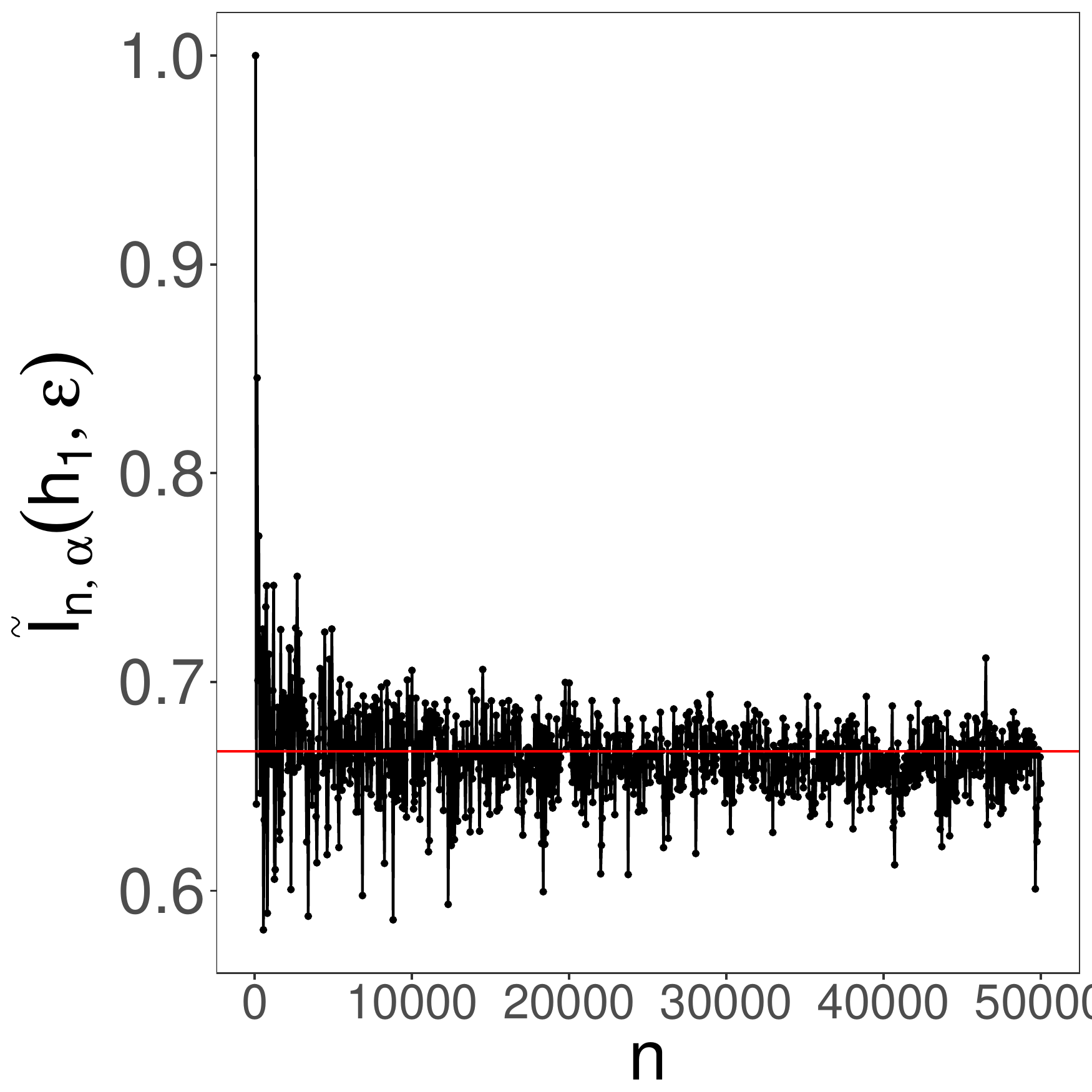}}%
\hspace{0.5in}
\subfigure[$\mathrm{I}(h_{2})=0.3333$, $\alpha_{\text{vi}}=0.33$]{%
    \includegraphics[height=0.35\textwidth]{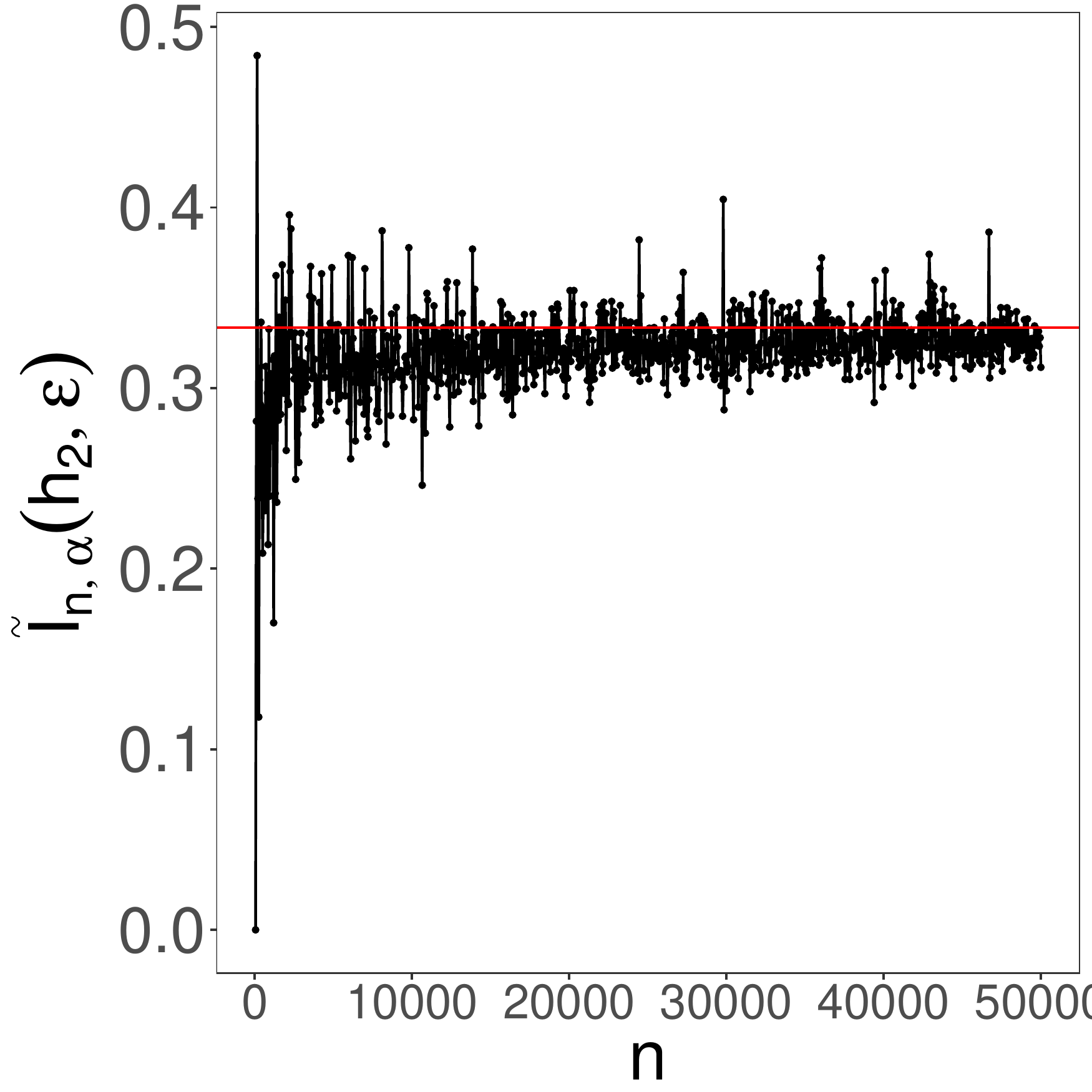}}%
\hspace{0.5in}
 \subfigure[$\mathrm{I}(h_{3})=1$, $\alpha_{\text{vi}}=0.1$]{%
    \includegraphics[height=0.35\textwidth]{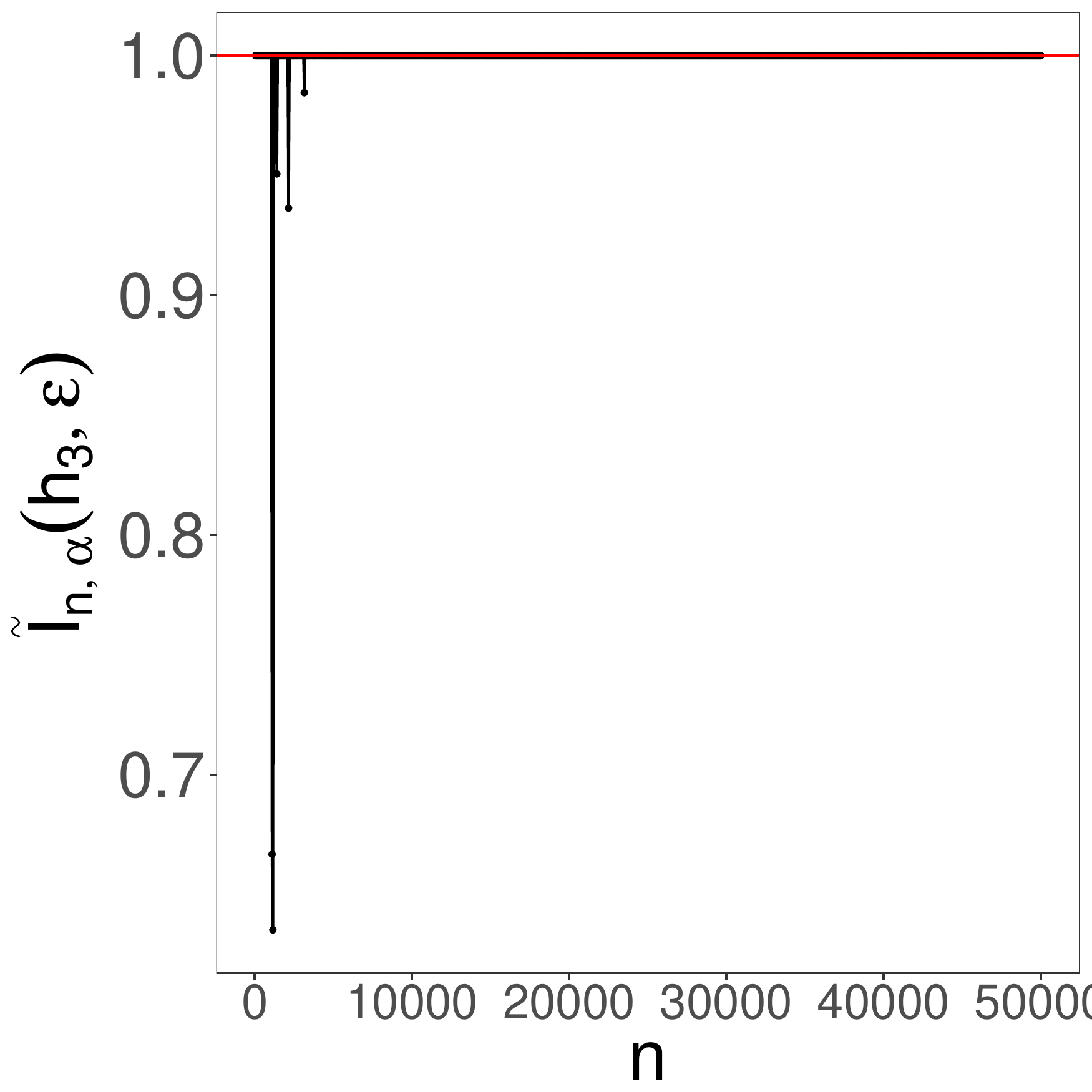}}%
\hspace{0.5in}
\subfigure[$\mathrm{I}(h_{4})=0$, $\alpha_{\text{vi}}=0.1$]{%
    \includegraphics[height=0.35\textwidth]{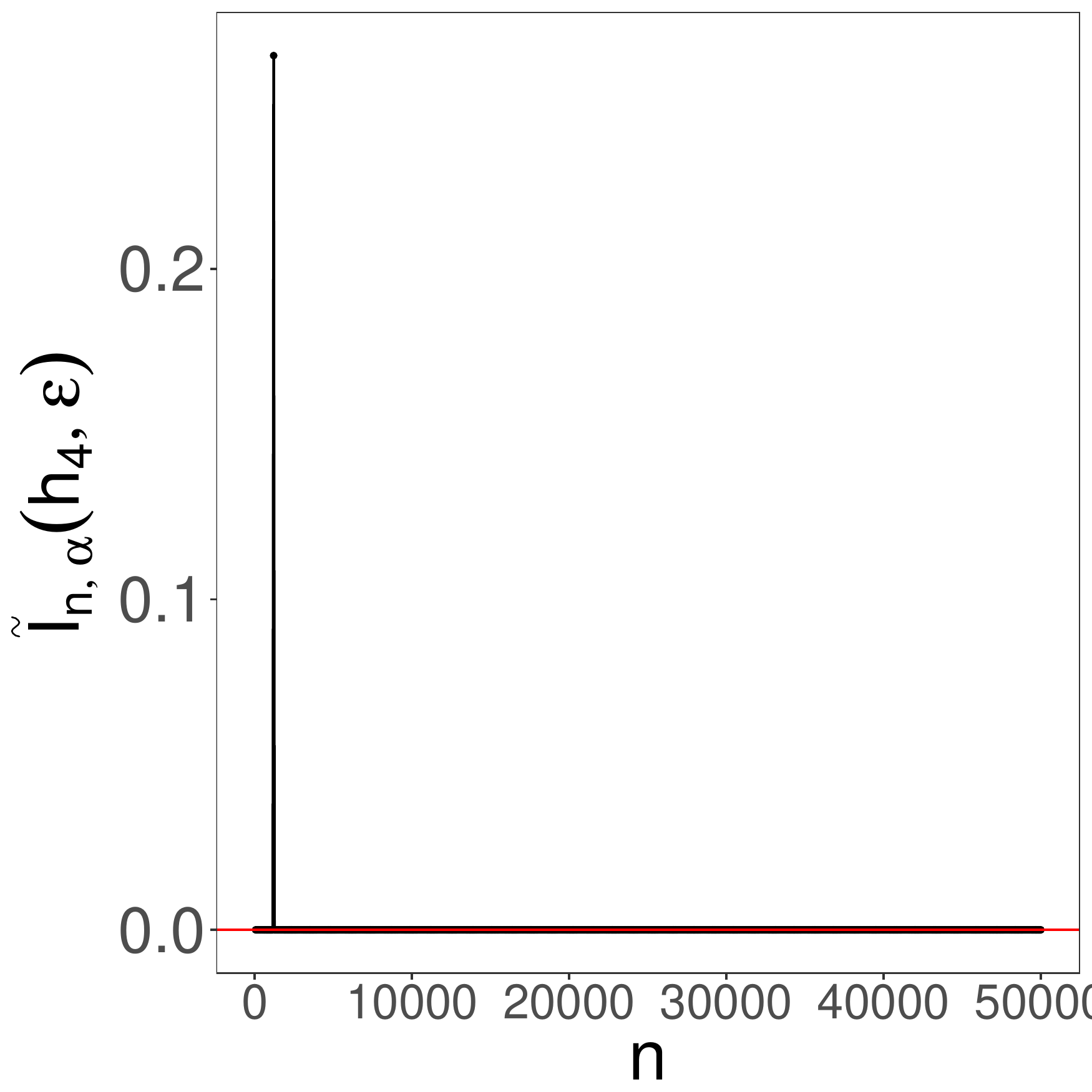}}%
    \caption{The performance of $\widetilde{\mathrm{I}}_{n,\alpha}(h,\varepsilon)$ with respect to $n$ in the case of quartet (\ref{quartet-1}) and based on vidual $\alpha$'s.}%
\label{h1h2h3h4-group-fixed}
\end{figure}	
where we depict the convergence of $\widetilde{\mathrm{I}}_{n,\alpha}(h,\varepsilon)$ to $\mathrm{I}(h)$ when $n$ grows. In each panel, the horizontal red `reference' line is at the height of the actual index value.

Note that in panel (a) of Figure~\ref{h1h2h3h4-group-fixed}, the visually obtained $\alpha_{\text{vi}}=0.35$ is slightly larger than $1/3$, but we have to say that we had decided on this value (as a good estimate) before we knew the result of Theorem \ref{th-1}, and thus before we knew the (theoretical) restriction $\alpha<1/3$. Nevertheless, we have decided to leave the value $\alpha_{\text{vi}}=0.35$ as it is, without tempering with our initial guess in any way. As we shall see in next Section \ref{cv-42}, however, the purely data-driven and based on cross-validation $\alpha $ value is $\alpha_{\text{cv}}=0.28$, which is within the range $(0,1/3)$ of theoretically acceptable $\alpha $ values.

\subsection{Choosing $\alpha $ based on cross validation}
\label{cv-42}

As we have already elucidated, equation (\ref{band=h}) connects our present problem with nonparametric regression-function estimation. In the latter area, researchers usually choose the optimal bandwidth as the point at which cross-validation scores become minimal (e.g., Arlot and Celisse, 2010, Celisse, 2008; and references therein). We adopt this viewpoint as well. Namely, given a scatterplot, say $(t_{i,n},Y_{i,n})$, we cross validate it (computational details and R packages will be described in a moment). Then we find the minimizing value $b=b_{\text{cv}}$ and finally, according to equation (\ref{band=h}),  arrive at the `optimal' $\alpha_{\text{cv}}$ via the equation
\begin{equation}\label{cv-eq}
\alpha_{\text{cv}}=\log(1/b_{\text{cv}})/\log(n).
\end{equation}
In Figure \ref{h1h2h3h4-cv},
\begin{figure}[h!]
  \centering
  \subfigure[Function $h_1$]{%
    \includegraphics[height=0.35\textwidth]{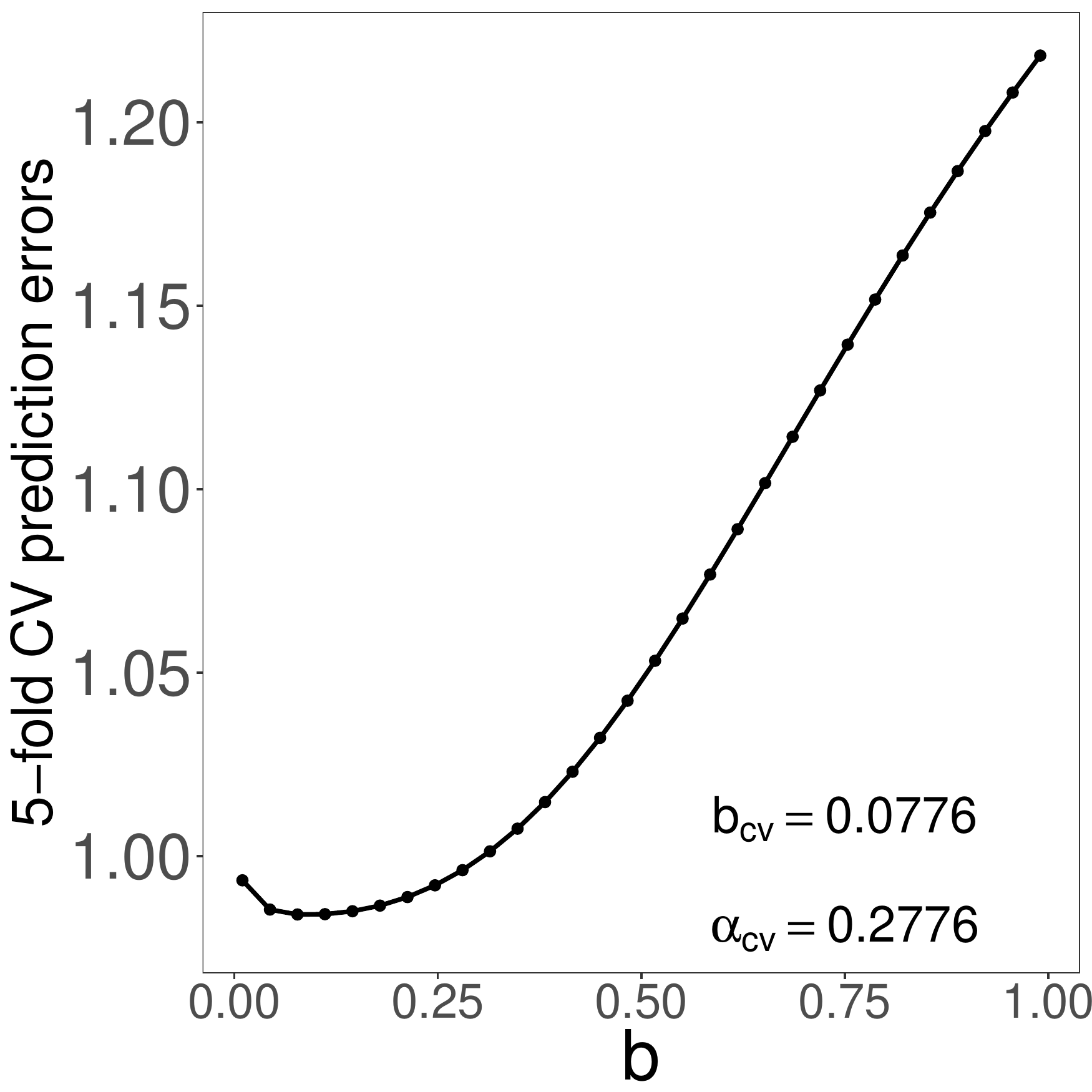}}%
\hspace{0.5in}
\subfigure[Function $h_2$]{%
    \includegraphics[height=0.35\textwidth]{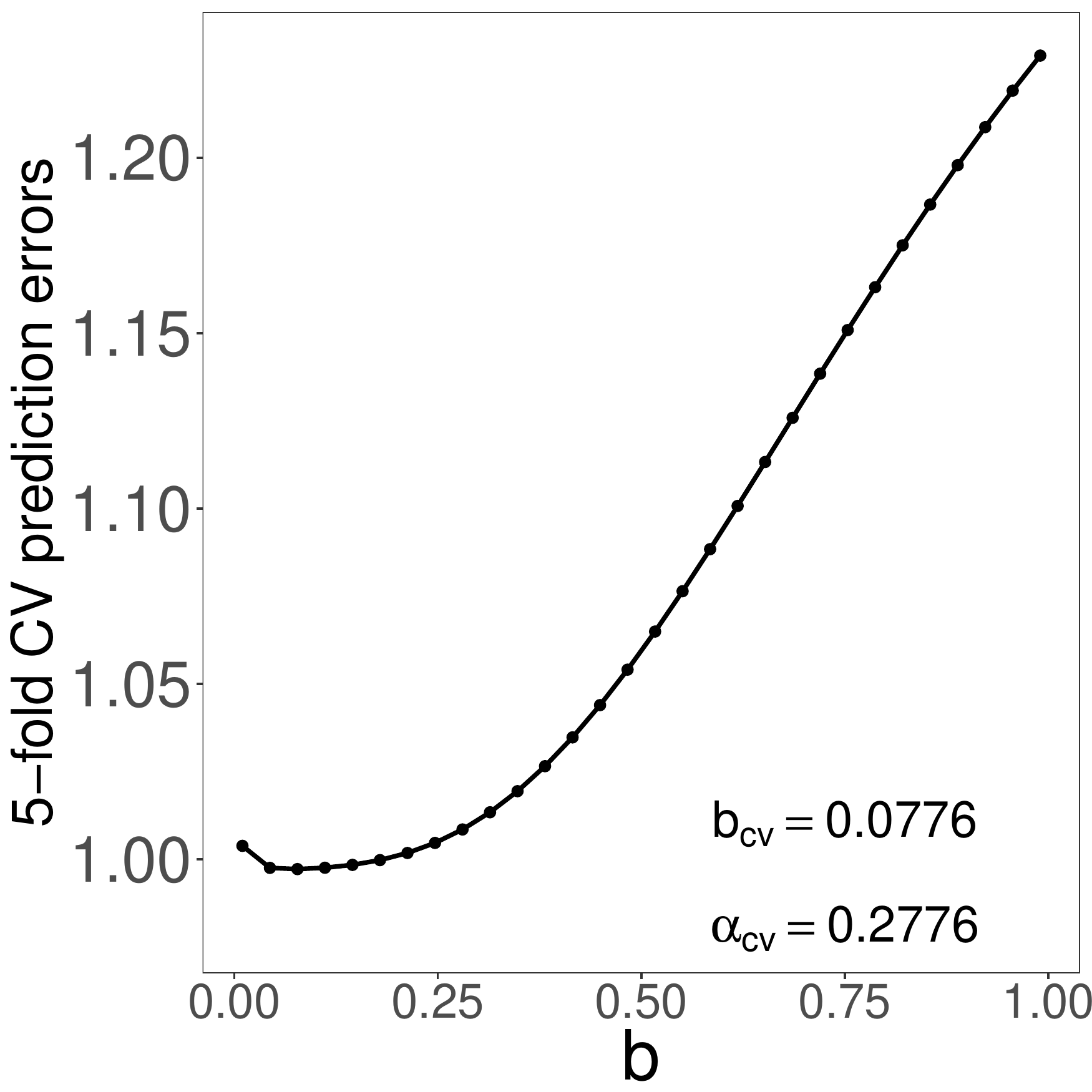}}%
\hspace{0.5in}
 \subfigure[Function $h_3$]{%
    \includegraphics[height=0.35\textwidth]{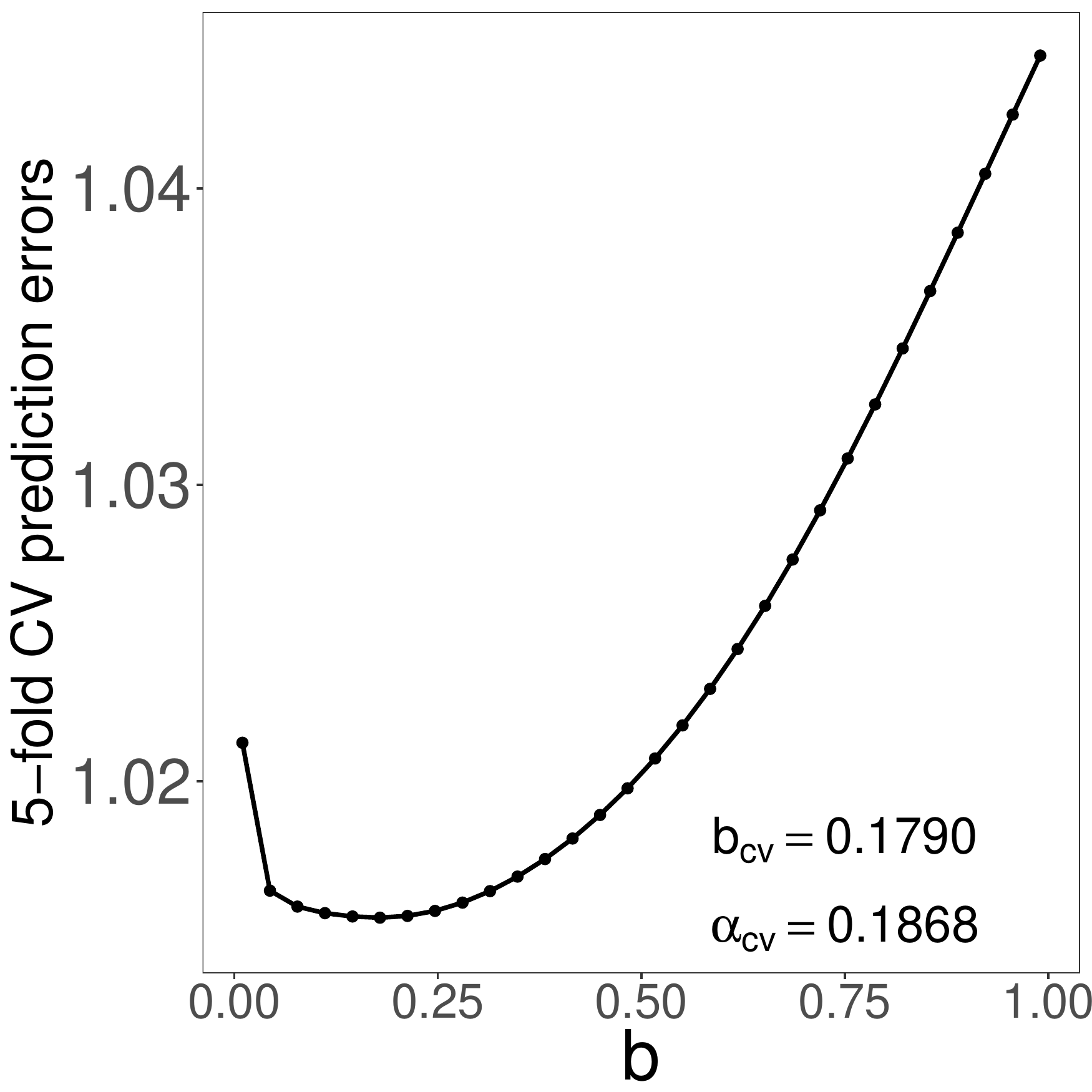}}%
\hspace{0.5in}
\subfigure[Function $h_4$]{%
    \includegraphics[height=0.35\textwidth]{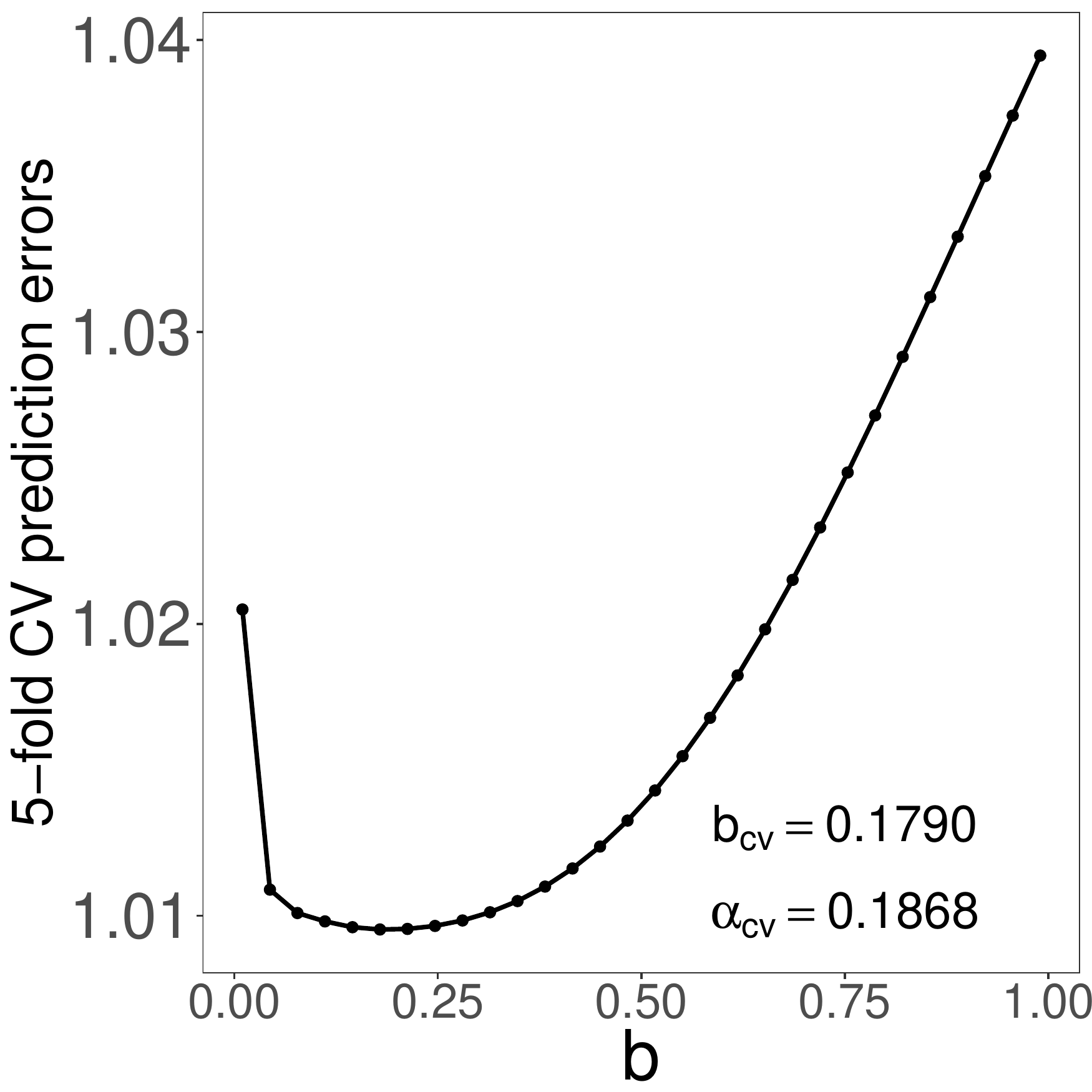}}%
    \caption{Cross validation, minima $b_{\text{cv}}$, and the grouping parameters  $\alpha_{\text{cv}}$ for quartet (\ref{quartet-1}).}%
\label{h1h2h3h4-cv}
\end{figure}	
we see some differences between the values of $\alpha_{\text{vi}}$ and $\alpha_{\text{cv}}$. Nevertheless, we should not prejudge the situation in any way because in practice, when no hyperplanes can be produced due to unknown values of $\mathrm{I}(h)$, only the values of $\alpha_{\text{cv}}$ can be extracted from data. Note, however, that the four values of $\alpha_{\text{cv}}$ reported in the panels of Figure \ref{h1h2h3h4-cv} are in compliance with the condition of Theorem \ref{th-1} stipulating that $\alpha$'s must be in the range $(0,1/3)$ in order to have (asymptotic) consistency.

To explore how the grouped estimator $\widetilde{\mathrm{I}}_{n,\alpha}(h,\varepsilon)$ based on $\alpha_{\text{cv}}$'s actually performs, we have produced Figure \ref{h1h2h3h4-group-fixed-cv}.
\begin{figure}[h!]
  \centering
  \subfigure[$\mathrm{I}(h_{1})=0.6667$, $\alpha_{\text{cv}}=0.28$]{%
    \includegraphics[height=0.35\textwidth]{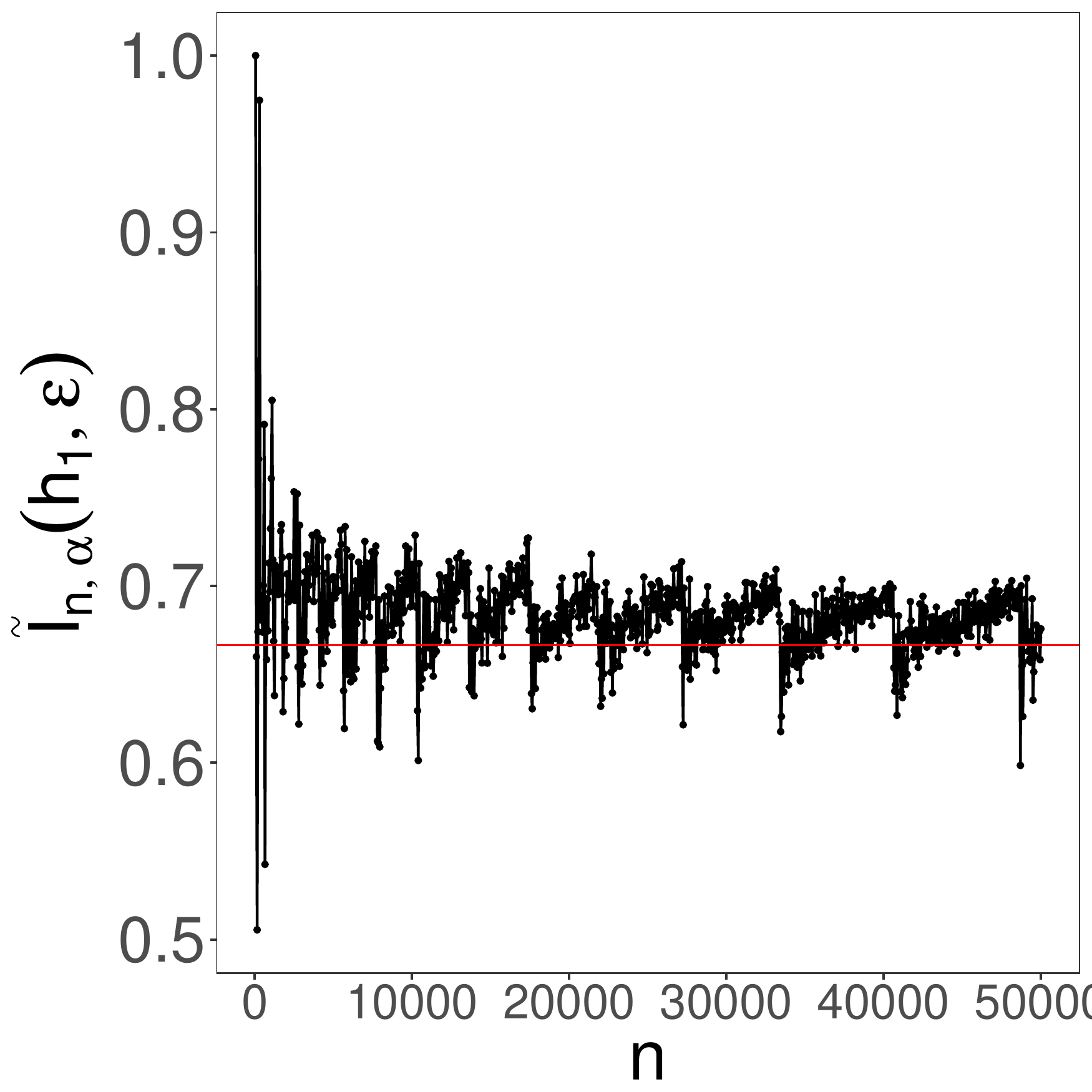}}%
\hspace{0.5in}
\subfigure[$\mathrm{I}(h_{2})=0.3333$, $\alpha_{\text{cv}}=0.28$]{%
    \includegraphics[height=0.35\textwidth]{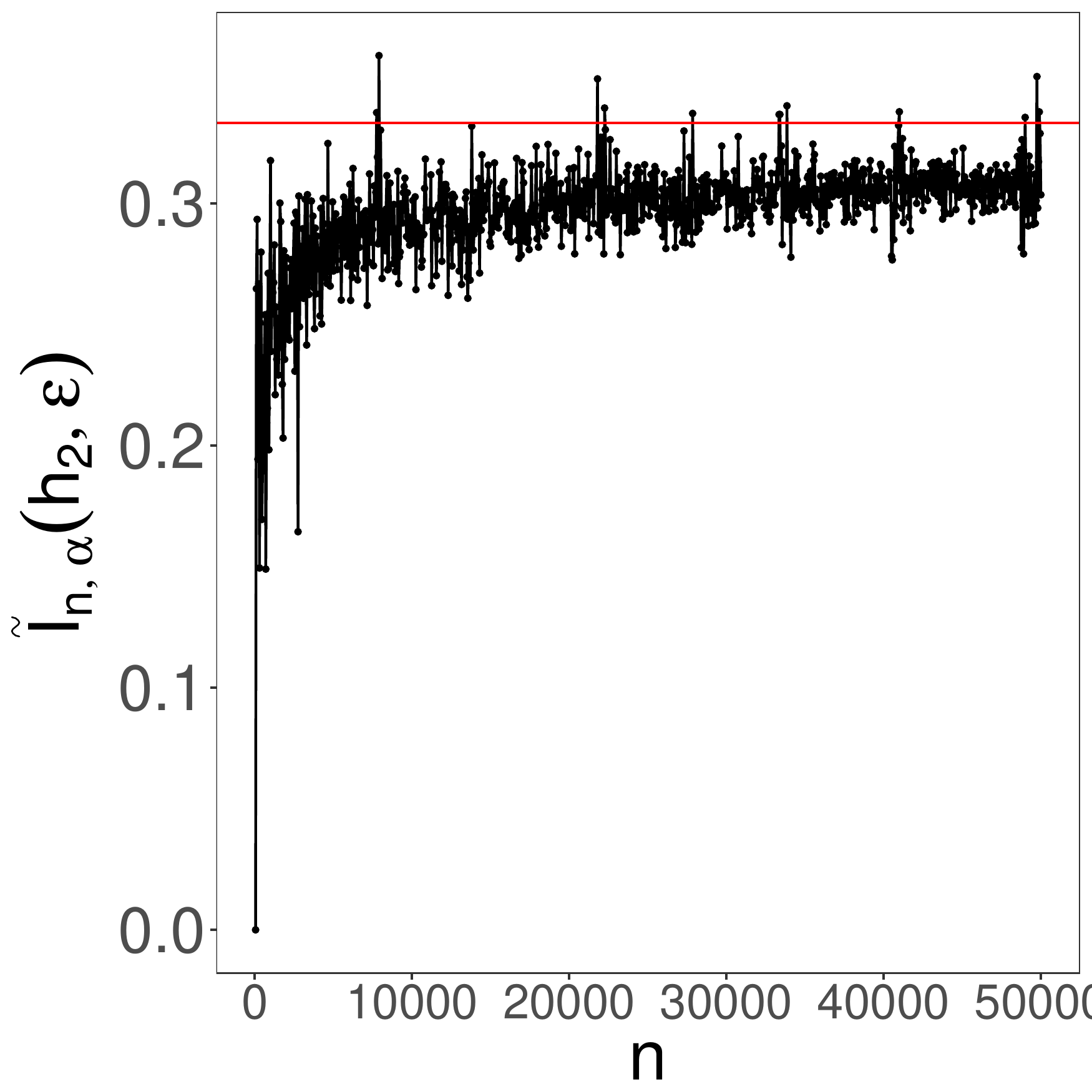}}%
\hspace{0.5in}
 \subfigure[$\mathrm{I}(h_{3})=1$, $\alpha_{\text{cv}}=0.19$]{%
    \includegraphics[height=0.35\textwidth]{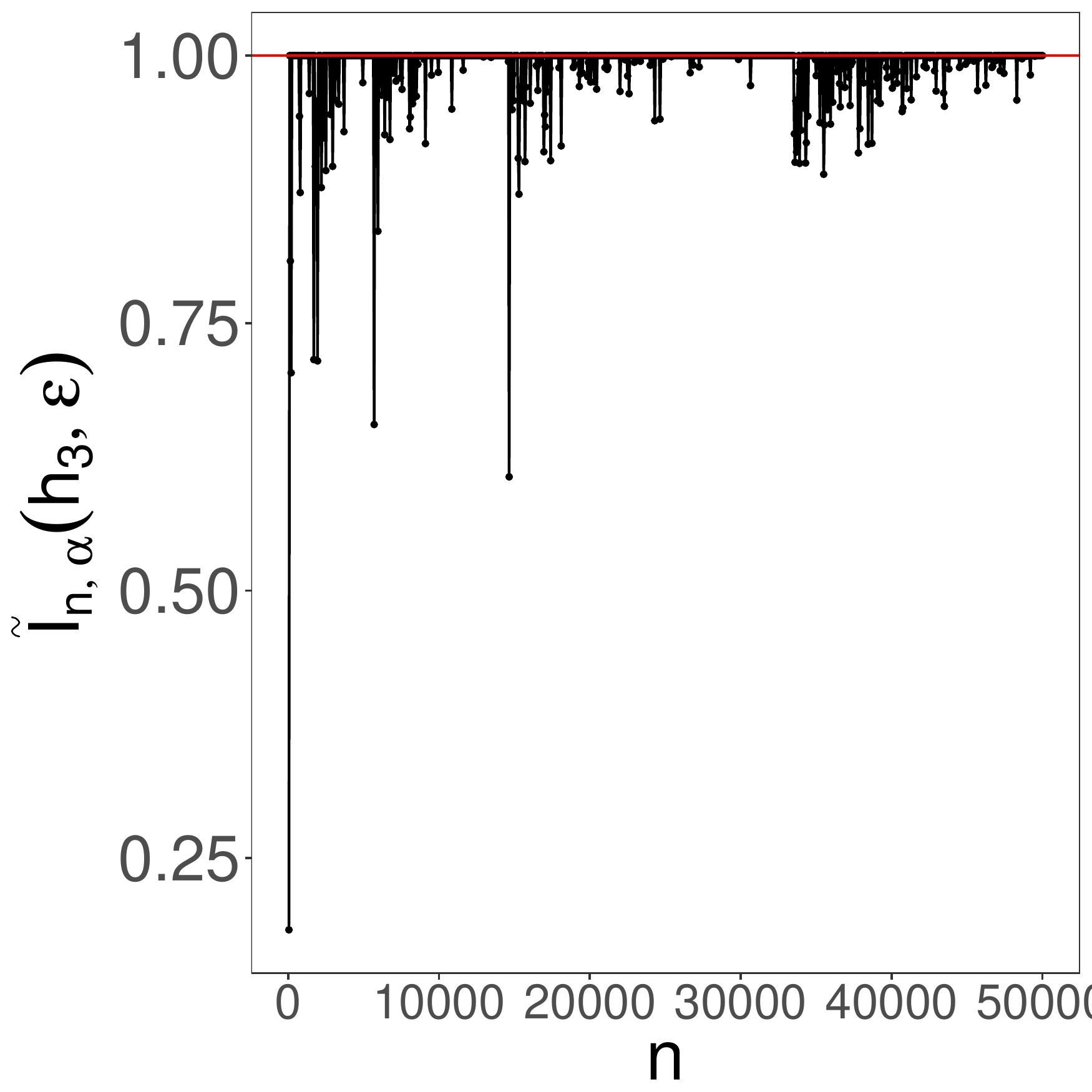}}%
\hspace{0.5in}
\subfigure[$\mathrm{I}(h_{4})=0$, $\alpha_{\text{cv}}=0.19$]{%
    \includegraphics[height=0.35\textwidth]{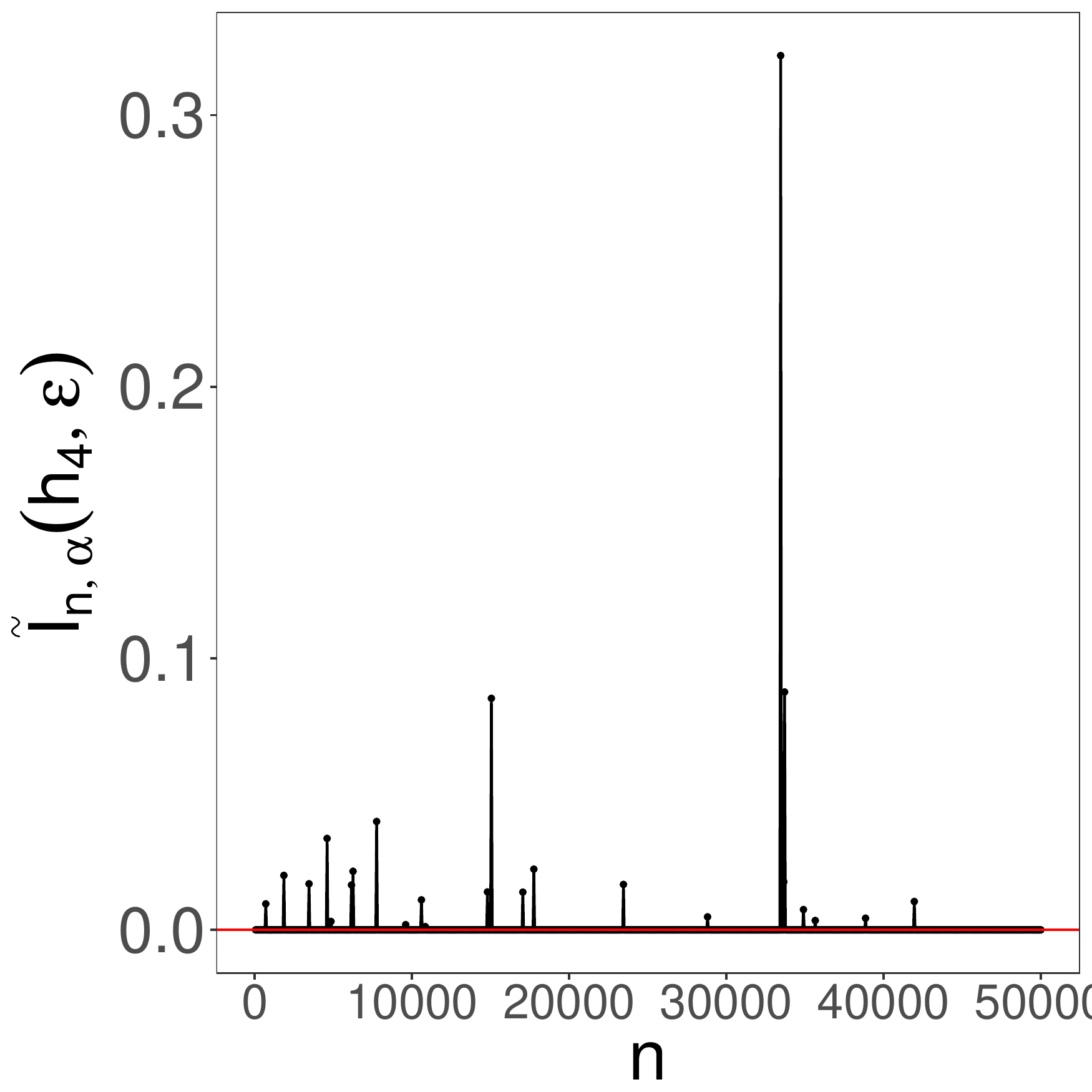}}%
    \caption{The performance of $\widetilde{\mathrm{I}}_{n,\alpha}(h,\varepsilon)$ with respect to $n$ in the case of quartet (\ref{quartet-1}) and cross validation.}%
\label{h1h2h3h4-group-fixed-cv}
\end{figure}	
Naturally, since the respective visual $\alpha_{\text{vi}}$'s and cross-validatory  $\alpha_{\text{cv}}$'s do not coincide, the corresponding values of $\widetilde{\mathrm{I}}_{n,\alpha}(h,\varepsilon)$ are also different. Which of them are better from the statistical point of view will become clearer only in Section \ref{bootstrap}, where bootstrap-based standard errors and confidence intervals are derived.

We next present a detailed implementation procedure for finding cross-validatory estimates $\alpha_{\text{cv}}$ of the grouping parameter $\alpha $. Naturally, the help of the R computing language (R Core Team, 2013) becomes indispensable, and we have used a number of R packages to accomplish the task. We also wish to acknowledge the packages \verb"ggplot2" (Wickham, 2009) and \verb"plotly" (Sievert et al., 2017) that we have used extensively in this paper to draw two-dimentional plots and interactive surface plots; the latter plots have been pivotal in extracting the values $\alpha_{\text{vi}}$  visually.

Hence, from the purely practical computational perspective, we now utilize bandwidth selection techniques of kernel-based regression-function estimation in order to get estimates of the grouping parameter. First, for the sake of programming efficiency, we restrict $b$'s to the interval $ (0.01, 0.99)$, and we evenly split the latter interval into bins of width $(0.99-0.01)/29 \approx 0.0338$, all of which can of course be refined in order to achieve, if desired, smaller computational errors. Hence, from now on, we have thirty equidistant $b$'s, which are $b_i\approx 0.01+(i-1)0.0338$ for $i=1,\dots , 30$. Next we use the common cross-validation method called repeated $k$-fold cross validation, and we set $k=5$ for our purpose. The following main steps are:
\begin{enumerate}
    \item\label{s1}
    For each function $h$ under consideration, we generate $n=10000$ data points based on equation (\ref{approx-random-e}).
	\item\label{s2}
We randomly split the given $n$ points into $k$ folds, denoted by $D_{1},\dots , D_{k}$,  of roughly equal sizes.
	\item\label{s3}
For each value $b_i$, we use $D_{1}$ as the validation set and let other $D$'s be training sets, which we use to fit a kernel regression model. Specifically, we use the function \verb"ksmooth" from the R package \verb"stats", with the parameter \verb"kernel" set to \verb"normal", which means that we use the normal kernel. Then we use the validation set $D_{1}$ to get the predicted values and calculate one prediction error, defined as the mean-square error and denoted by $E_{1}$. We repeat this step until we use up all the folds as our validation sets. Hence, we obtain $k$ prediction errors $E_{1},\dots , E_{k}$. Finally, we average these $k$ prediction errors and denote this average by  $E_{b_{i},1}$.
	\item\label{s4}
We repeat Step \ref{s3} for all $b_i$'s, thus arriving at one estimated prediction error for each $b_i$. Hence, in total, we have $E_{b_{1},1}, \dots, E_{b_{30},1}$.
	\item\label{s5}
We repeat Steps \ref{s1}--\ref{s4} fifty times, for every $b_i$, and then take the averages of the corresponding fifty estimated prediction errors. This gives us fifty final estimates, which we denote by $E_{b_{i}}$. For example, for $b_{1}$, the final estimate $E_{b_{1}}$ is the average of $E_{b_{1},1}, \dots, E_{b_{1},50}$. In summary, after this step, we have $E_{b_{1}}, \dots , E_{b_{30}}$ of the final estimates of the prediction error.
	\item\label{s6}
We draw the plot of the $b_i$'s versus the corresponding estimated prediction errors. The $b_i$ that gives the minimal prediction error is denoted by $b_{\text{cv}}$. Finally, we use equation (\ref{cv-eq}) to get $\alpha_{\text{cv}}$.
\end{enumerate}

\subsection{Proof of Theorem \ref{th-1}}

The following lemma, whose special case is Proposition \ref{prop-a3} formulated earlier, plays a pivotal role when proving Theorem \ref{th-1}.

\begin{lemma}
\label{le-a2}
Let $h$ be differentiable, and let its derivative $h'$ be $\gamma$-H\"{o}lder continuous for some $\gamma\in (0,1]$. Furthermore, let $\ell$ be any positively homogeneous and Lipschitz function. Then there is a constant $c<\infty $ such that, for any set of points $s_1:=0<s_2<\cdots <s_M\le 1$,
\begin{equation}\label{partition-1}
\sum_{j=2}^M \ell \Big (h(s_{j})-h(s_{j-1})\Big )
= \int_0^{s_M} \ell \big ( h' \big )\text{d}\lambda
+ \theta c \sum_{j=2}^M  | s_{j}-s_{j-1} |^{1+\gamma }
\end{equation}
where $\theta $ is such that $|\theta|\le 1$.
\end{lemma}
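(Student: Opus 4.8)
The plan is to estimate the discrepancy between the two sides of (\ref{partition-1}) interval by interval. Write $\Delta_j:=s_j-s_{j-1}>0$ and let $A_j:=\big(h(s_j)-h(s_{j-1})\big)/\Delta_j$ be the $j$-th difference quotient. Since $\ell$ is positively homogeneous (of degree one) and $\Delta_j>0$, we have $\ell\big(h(s_j)-h(s_{j-1})\big)=\ell(\Delta_j A_j)=\Delta_j\,\ell(A_j)$. On the other hand, because $s_1=0$ and the intervals $[s_{j-1},s_j]$, $j=2,\dots,M$, tile $[0,s_M]$, we may split $\int_0^{s_M}\ell(h')\,\mathrm{d}\lambda=\sum_{j=2}^M\int_{s_{j-1}}^{s_j}\ell\big(h'(t)\big)\,\mathrm{d}t$. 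Hence the left-hand side of (\ref{partition-1}) minus the integral equals $\sum_{j=2}^M\int_{s_{j-1}}^{s_j}\big(\ell(A_j)-\ell(h'(t))\big)\,\mathrm{d}t$, and it is enough to bound each summand, in absolute value, by $c\,\Delta_j^{1+\gamma}$ for a constant $c$ independent of the partition.

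For fixed $j$ and any $t\in[s_{j-1},s_j]$, the fundamental theorem of calculus (applicable since $h'$ is continuous, being $\gamma$-H\"older) gives $A_j-h'(t)=\Delta_j^{-1}\int_{s_{j-1}}^{s_j}\big(h'(u)-h'(t)\big)\,\mathrm{d}u$. Writing $H$ for the H\"older constant of $h'$ and using $|u-t|\le\Delta_j$ on the interval, this yields $|A_j-h'(t)|\le\Delta_j^{-1}\int_{s_{j-1}}^{s_j}H|u-t|^\gamma\,\mathrm{d}u\le H\,\Delta_j^{\gamma}$. Denoting by $L$ the Lipschitz constant of $\ell$, we then get $|\ell(A_j)-\ell(h'(t))|\le L|A_j-h'(t)|\le LH\,\Delta_j^{\gamma}$ for every $t\in[s_{j-1},s_j]$, and integrating over that interval produces the per-piece bound $\big|\int_{s_{j-1}}^{s_j}(\ell(A_j)-\ell(h'(t)))\,\mathrm{d}t\big|\le LH\,\Delta_j^{1+\gamma}$. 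Summing over $j=2,\dots,M$ and putting $c:=LH$, which depends only on $h$ (through $H$) and on $\ell$ (through $L$), we conclude that the difference between the two sides of (\ref{partition-1}) equals $\theta c\sum_{j=2}^M\Delta_j^{1+\gamma}$ for some $\theta$ with $|\theta|\le 1$, which is the assertion.

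There is no genuine obstacle in this argument; the only points requiring a little care are (i) applying positive homogeneity with the \emph{strictly positive} factor $\Delta_j$, so that $\ell(\Delta_j A_j)=\Delta_j\,\ell(A_j)$ holds exactly, and (ii) verifying that $c=LH$ is truly uniform over all admissible partitions $0=s_1<\cdots<s_M\le 1$, which it is. Finally, Proposition \ref{prop-a3} follows by specializing (\ref{partition-1}) to the uniform partition $s_j=t_{j,n}=(j-1)/(n-1)$, $j=1,\dots,n$, for which $s_M=1$ and $\sum_{j=2}^n\Delta_j^{1+\gamma}=(n-1)\,(n-1)^{-(1+\gamma)}=(n-1)^{-\gamma}=O(n^{-\gamma})$; applying this with $\ell(t)=t_{+}$ and $\ell(t)=|t|$ and dividing the resulting numerator and denominator identities gives $\mathrm{I}_n(h)=\mathrm{I}(h)+O(n^{-\gamma})$.
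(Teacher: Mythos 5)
Your proof is correct and follows essentially the same route as the paper: split the integral over the partition intervals, use positive homogeneity to write each increment as $\Delta_j$ times $\ell$ of a derivative-like quantity, and control the discrepancy via the Lipschitz property of $\ell$ and the H\"older continuity of $h'$. The only (harmless) difference is that you compare $\ell$ of the integral mean $A_j=\Delta_j^{-1}\int_{s_{j-1}}^{s_j}h'\,\mathrm{d}\lambda$ directly with $\ell(h'(t))$ in one step, whereas the paper routes the comparison through $\ell(h'(s_j))$ and a mean-value point $\xi_j$; your version is a slight streamlining of the same argument.
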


\begin{proof}
Since $\ell$ is Lipschitz and $h'$ is $\gamma$-H\"{o}lder continuous, we have
\begin{align}
\int_0^{s_M} \ell \big ( h' \big )\text{d}\lambda
&= \sum_{j=2}^M \int_{s_{j-1}}^{s_{j}} \ell \big ( h'(s) \big )
-\ell \big ( h'(s_{j}) \big )\text{d}s
+ \sum_{j=2}^M \big (s_{j}-s_{j-1} \big ) \ell \big ( h'(s_{j}) \big )
\notag
\\
&= \theta c \sum_{j=2}^M \int_{s_{j-1}}^{s_{j}} | s- s_{j} |^{\gamma }\text{d}s
+ \sum_{j=2}^M \big (s_{j}-s_{j-1} \big ) \ell \big ( h'(s_{j}) \big )
\notag
\\
&= \theta c \sum_{j=2}^M  | s_{j} -s_{j-1}|^{1+\gamma }
+ \sum_{j=2}^M \big (s_{j}-s_{j-1} \big ) \ell \big ( h'(s_{j}) \big ),
\label{a2-1}
\end{align}
where the values of $c<\infty $ and $|\theta|\le 1$ might have changed from line to line. Next, we explore the right-most sum of equation (\ref{a2-1}), to which we add and subtract the right-hand side of equation (\ref{partition-1}). Then we use the mean-value theorem with some $\xi_j\in [s_{j-1},s_{j}]$ and arrive at the equations
\begin{align}
\sum_{j=2}^M \big (s_{j}-s_{j-1} \big ) \ell \big ( h'(s_{j}) \big )
&= \sum_{j=2}^M \ell \Big (h(s_{j})-h(s_{j-1})\Big )
+\sum_{j=2}^M (s_{j}-s_{j-1}) \Big ( \ell \big ( h'(s_{j}) \big )
-\ell \big ( h'(\xi_{j})\big ) \Big )
\notag
\\
&= \sum_{j=2}^M \ell \Big (h(s_{j})-h(s_{j-1})\Big )
+\theta c \sum_{j=2}^M  | s_{j} -s_{j-1}|^{1+\gamma },
\label{a2-2}
\end{align}
where the last equation holds because $\ell$ is positively homogeneous and Lipschitz, and $h'$ is $\gamma$-H\"{o}lder continuous. Equations (\ref{a2-1}) and (\ref{a2-2}) imply equation (\ref{partition-1}) and finish the proof of Lemma \ref{le-a2}.
\end{proof}

\begin{proof}[Proof of Theorem \ref{th-1}]
We start with the equations
\begin{align}
\widetilde{Y}_{j,n}&={1\over N}\sum_{i\in G_{j,n}}  h(t_{i,n})
+{1\over N}\sum_{i\in G_{j,n}}  \varepsilon_i
\notag
\\
&={1\over N}\sum_{i\in G_{j,n}}  h(t_{i,n})
+\varepsilon^*_{j,n} ,
\label{ytilde-1}
\end{align}
where
\[
\varepsilon^*_{j,n}={1\over N}\sum_{i\in G_{j,n}}  \varepsilon_i.
\]
We next tackle the deterministic sum on the right-hand side of equation (\ref{ytilde-1}), and start with the equation
\[
{1\over N}\sum_{i\in G_{j,n}}  h(t_{i,n})
= {n-1\over N}\sum_{i=1}^{N}  h\bigg ( { i-1\over n-1}+{ (j-1)N\over n-1} \bigg ){1\over n-1}
\]
because $G_{j,n}=(j-1)N+\{1, \dots , N\}$ for all $j=1,\dots , M$. Consequently,
\begin{align}
{1\over N}\sum_{i\in G_{j,n}}  h(t_{i,n})
&= {n-1\over N} \bigg ( \sum_{i=1}^{N}  h\bigg ( { i-1\over n-1}+{ (j-1)N\over n-1} \bigg ){1\over n-1} -\int_{(j-1)N/(n-1)}^{jN/(n-1)} h\text{d}\lambda  \bigg )
\notag
\\
&\qquad + {n-1\over N} \int_{(j-1)N/(n-1)}^{jN/(n-1)} h\text{d}\lambda
\notag
\\
&= {n-1\over N} \int_{(j-1)N/(n-1)}^{jN/(n-1)} h\text{d}\lambda  + O(n^{-1 }),
\label{ytilde-1new2}
\end{align}
where we used the fact that $h$ is Lipschitz. By the mean-value theorem, there is $t_{j,n}^*$ between $(j-1)N/(n-1)$ and $jN/(n-1)$ such that the right-hand side of equation (\ref{ytilde-1new2}) is equal to $h(t_{j,n}^*)+ O(n^{-1})$.
Consequently, with the notation
\[
Y^*_{j,n}:=h(t_{j,n}^*)+\varepsilon^*_{j,n},
\]
we have $\widetilde{Y}_{j,n}=Y^*_{j,n}+O(n^{-1})$ and thus the increments  $\widetilde{Y}_{j,n}-\widetilde{Y}_{j-1,n}$ are equal to $Y^*_{j,n}-Y^*_{j-1,n}+O(n^{-1})$.  This gives us the equations
\begin{align}
\sum_{j=2}^M \ell\Big (\widetilde{Y}_{j,n}-\widetilde{Y}_{j-1,n}\Big )
&= \sum_{j=2}^M \ell\Big (Y^*_{j,n}-Y^*_{j-1,n}\Big ) +O(n^{-(1-\alpha)})
\notag
\\
&= \sum_{j=2}^M \ell \Big (h(t_{j,n}^*)-h(t_{j-1,n}^*)\Big )
+ O\bigg ( \sum_{j=2}^M |\varepsilon^*_{j,n}-\varepsilon^*_{j-1,n}| \bigg )
+O(n^{-(1-\alpha)})
\label{ytilde-1new3}
\end{align}
because $|\ell(t)-\ell(s)|\le |t-s|$ for all real $t$ and $s$.
The random variables $\varepsilon^*_{j,n}$, $j=1,\dots, M$, are independent and identically distributed with the means $0$ and variances $\sigma^2/N$. Hence,
\begin{align*}
\mathbf{E}\bigg ( \sum_{j=2}^M |\varepsilon^*_{j,n}-\varepsilon^*_{j-1,n}| \bigg )
&\le cM \max_{j}\sqrt{ \mathbf{E}\big ( (\varepsilon^*_{j,n})^2 \big ) }
\\
&\le cM \max_{j}\sqrt{ \sigma^2/N }
\\
&=O\big( n^{-(1-3\alpha)/2}\big ),
\end{align*}
which implies
\begin{equation}
\sum_{j=2}^M |\varepsilon^*_{j,n}-\varepsilon^*_{j-1,n}| =O_{\mathbf{P}}\big(n^{-(1-3\alpha)/2}\big ).
\label{eps-bound}
\end{equation}
The right-hand side of equation (\ref{eps-bound}) converges to $0$ because $\alpha \in (0,1/3)$. In view of equations (\ref{ytilde-1new3}) and (\ref{eps-bound}), we have
\begin{equation}
\sum_{j=2}^M \ell\Big (\widetilde{Y}_{j,n}-\widetilde{Y}_{j-1,n}\Big )
= \sum_{j=2}^M \ell \Big (h(t_{j,n}^*)-h(t_{j-1,n}^*)\Big )
+ O_{\mathbf{P}}\big(n^{-(1-3\alpha)/2}\big )+O(n^{-(1-\alpha)}).
\label{ytilde-2a}
\end{equation}
Furthermore, by Lemma \ref{le-a2} we have
\begin{equation}
\sum_{j=2}^M \ell \Big (h(t_{j,n}^*)-h(t_{j-1,n}^*)\Big )
= \int_0^1 \ell \big ( h' \big )\text{d}\lambda  +O\big( n^{-\alpha \gamma }\big ).
\label{ytilde-2c}
\end{equation}
Combining equations (\ref{ytilde-2a}) and (\ref{ytilde-2c}), and using $\beta$ to denote $\min\{\alpha \gamma, (1-3\alpha)/2\}$, we have
\begin{align*}
\widetilde{\mathrm{I}}_{n,\alpha}(h,\varepsilon)
&={\int_0^1 (h')_{+}\text{d}\lambda +O_{\mathbf{P}}\big( n^{-\beta }\big ) \over \int_0^1 |h'|\text{d}\lambda +O_{\mathbf{P}}\big( n^{-\beta }\big ) }
\notag
\\
&\stackrel{\mathbf{P}}{\to} {\int_0^1 (h')_{+}\text{d}\lambda \over \int_0^1 |h'|\text{d}\lambda }=\mathrm{I}(h) .
\end{align*}
The rate of convergence is of the order $ O_{\mathbf{P}}(n^{-\beta})$. Theorem~\ref{th-1} is proved.
\end{proof}

\section{Bootstrap-based confidence intervals}
\label{bootstrap}

To construct confidence intervals for $\mathrm{I}(h)$ based on the estimator $\widetilde{\mathrm{I}}_{n,\alpha}(h,\varepsilon)$, we need to determine standard errors, which turns out to be a very complex task from the viewpoint of asymptotic theory. Hence, we employ bootstrap (e.g., Hall, 1992; Efron and Tibshirani, 1993; Shao and Tu, 1995; Davison and Hinkley, 1997; and references therein). The re-sampling size $m$ is quite often chosen to be equal to the actual sample size $n$, but in our case, we find it better to re-sample fewer than $n$ observations (i.e., $m < n$) and thus follow specialized to this topic literature by Bickel et al. (1997), Bickel and Sakov (2008), Gribkova and  Helmers (2007, 2011); see also references therein. Specifically, the steps that we take are:
\begin{itemize}
	\item For a given function $h$, we generate $n=10000$ values $y_{1}, ...., y_{n}$ according to the model $Y_{i} = h(t_{i,n})+ \varepsilon_{i}$, where $\varepsilon_{i}$ are i.i.d. standard normal.
    \item We re-sample $1000$ times and in this way obtain 1000 sub-samples of size $m$, which we choose to be $m \approx 2\sqrt{n}$ according to a rule of thumb (DasGupta, 2008, p.~478).
	\item We use formula (\ref{approx-random-grouped}) to calculate the grouped index of increase, thus obtaining 1000 values of it; one value for each sub-sample. We denote the empirical distribution of the obtained values by $F^{*}$.
	\item With $Q^{*}$ denoting the (generalized) inverse of $F^{*}$, the 95\% quantile-based confidence interval is $(q_{2.5\%}, q_{97.5\%})$, where $q_{2.5\%}= Q^{*}(0.025)$ and $q_{97.5\%}= Q^{*}(0.975)$.
\end{itemize}

To illustrate, we introduce a second quartet of functions of this paper, namely:
\begin{equation}
\begin{split}
h_{5}(t)=(t-1)^{2}+ \sin(6t),
&\quad h_{6}(t)=(t-0.25)^{2}+\sin(0.25t),
\\
h_{7}(t)=t^{3}-5.6t^{2}+6t,
&\quad h_{8}(t)=\sin(2\pi t).
\end{split}
\label{quartet-2}
\end{equation}
We have visualized the functions in Figure \ref{fig-quartet-2}.
\begin{figure}[h!]
\centering
  \centering
  \subfigure[$\mathrm{I}(h_{5})\approx \mathrm{I}_n(h_{5})=0.3311$]{%
    \includegraphics[height=0.3\textwidth]{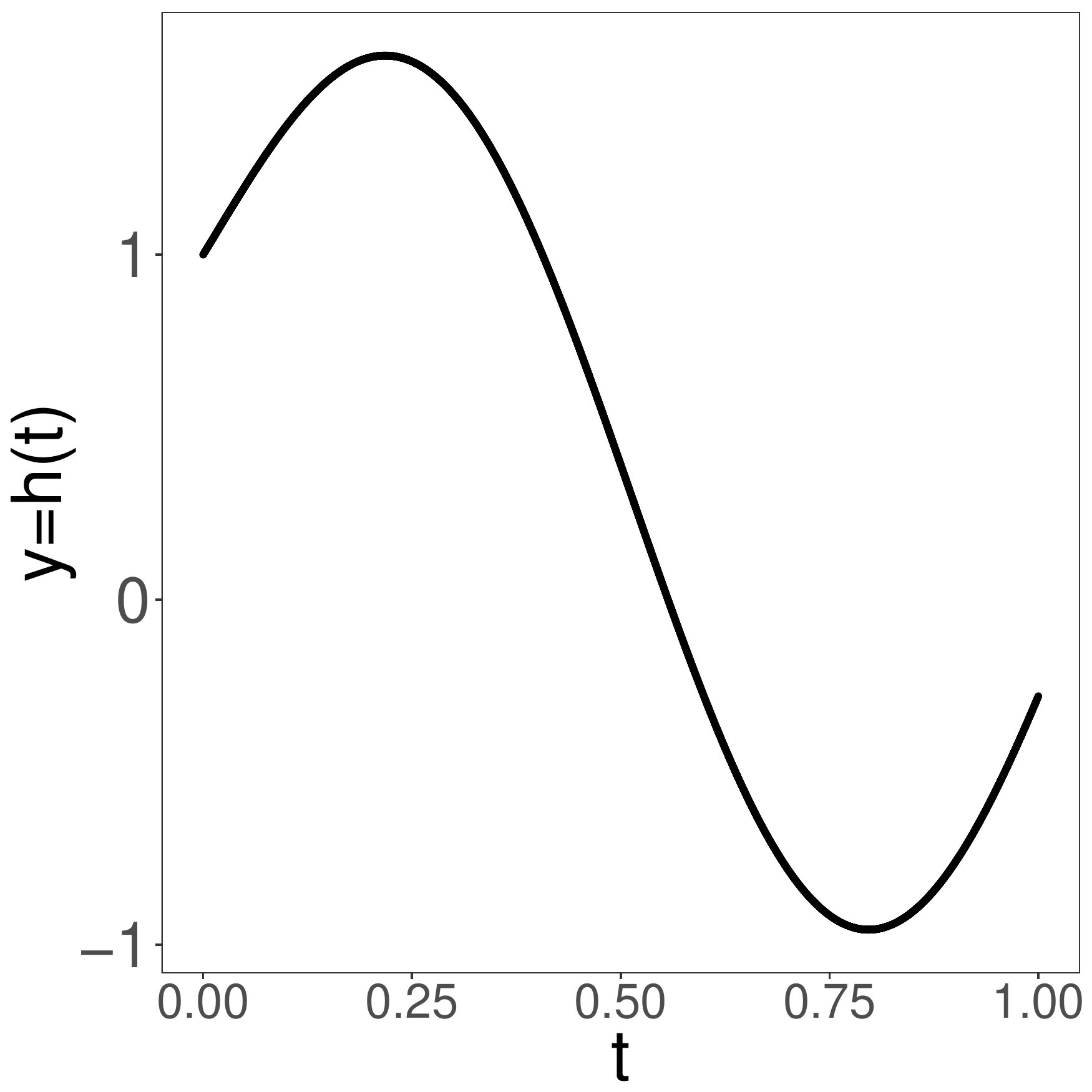}}%
\hspace{0.5in}
\subfigure[$\mathrm{I}(h_{6})\approx\mathrm{I}_n(h_{6})=0.9799$]{%
    \includegraphics[height=0.3\textwidth]{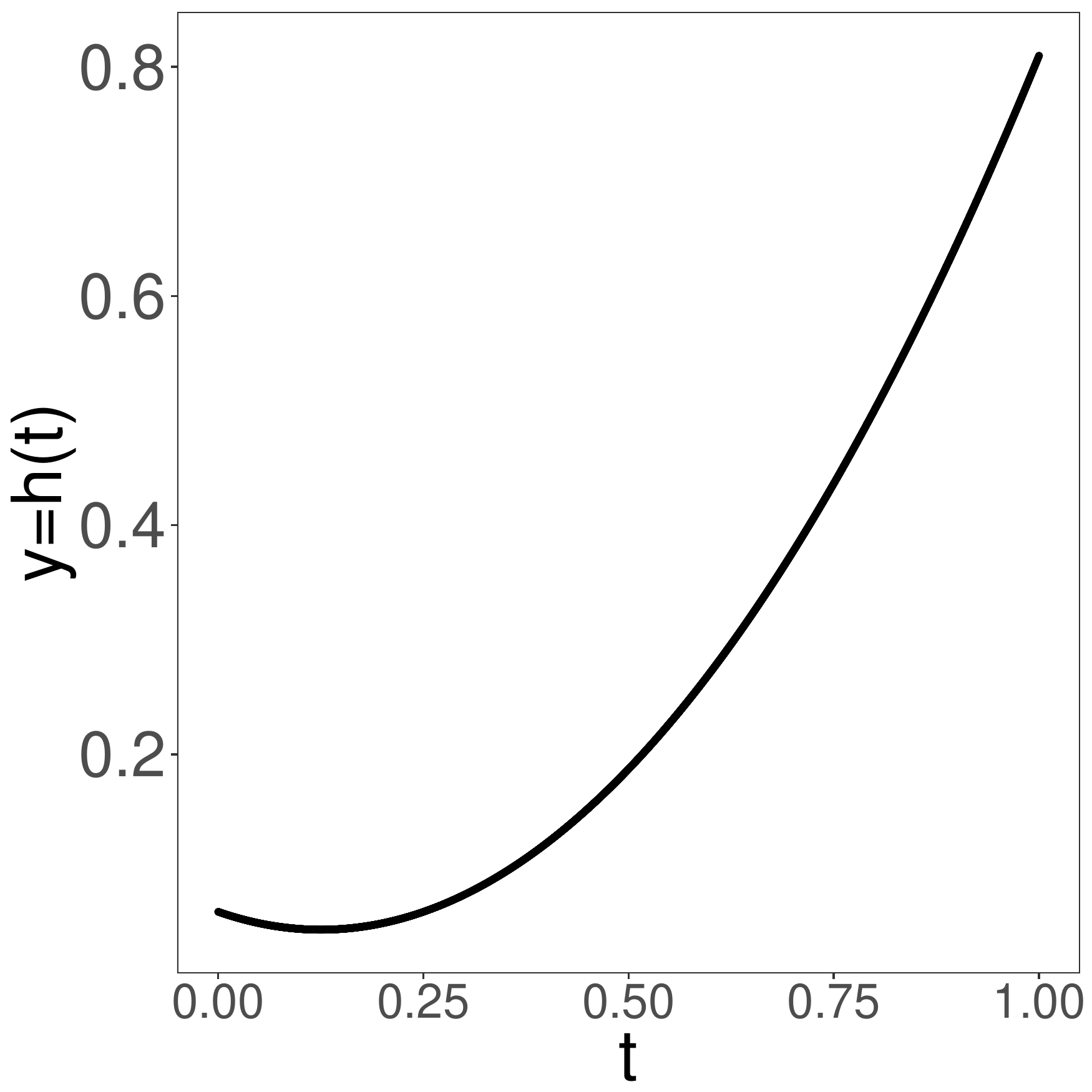}}%
\hspace{0.5in}
 \subfigure[$\mathrm{I}(h_{7})=\mathrm{I}_n(h_{7})=8157$]{%
    \includegraphics[height=0.3\textwidth]{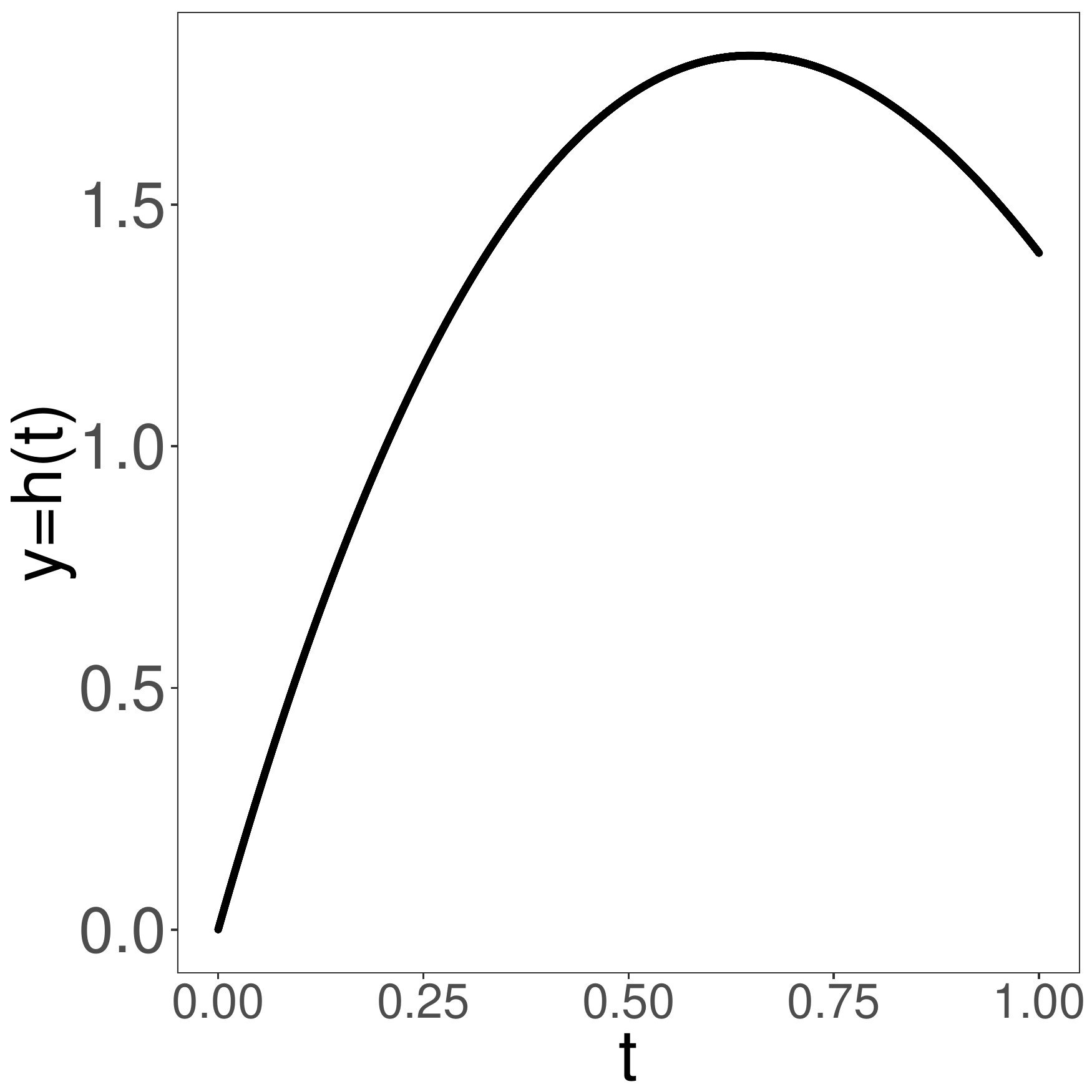}}%
\hspace{0.5in}
\subfigure[$\mathrm{I}(h_{8})=\mathrm{I}_n(h_{8})=0.5000$]{%
    \includegraphics[height=0.3\textwidth]{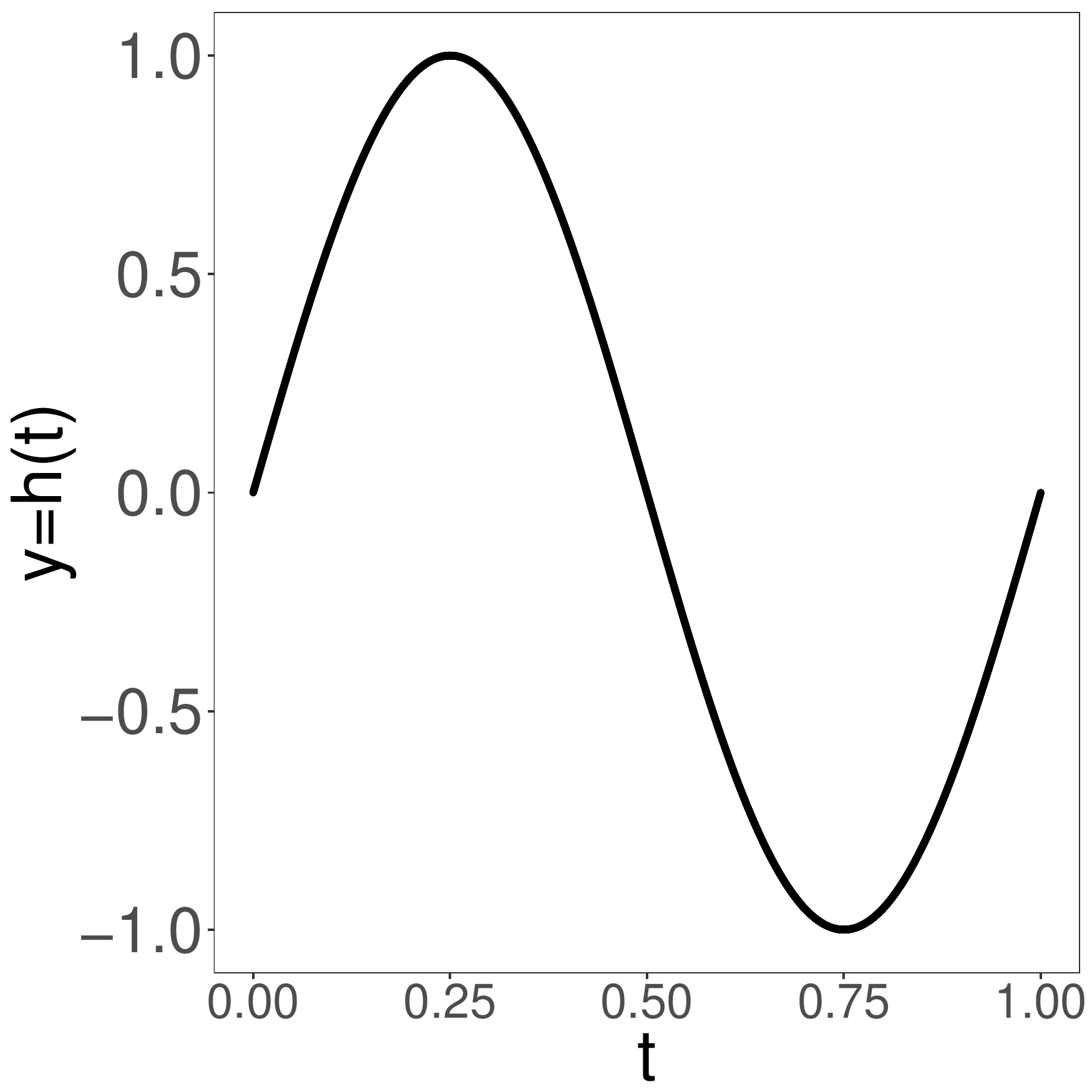}}%
\caption{Quartet (\ref{quartet-2}) functions and their indices of increase}
\label{fig-quartet-2}
\end{figure}
As expected, our preliminary analysis has shown that the un-groped estimators converge to $0.5$ in all the four cases, but the grouped estimator $\widetilde{\mathrm{I}}_{n,\alpha}(h,\varepsilon)$ does converge under appropriate choices of the grouping (or smoothing) parameter $\alpha $ values. Next are summaries of our findings using two approaches: the first one is data exploratory (visual) and the second one is based on cross validation.

\subsection{Data exploratory (visual) choice of $\alpha $}

Based on the crossings of surfaces and hyperplanes depicted in Figure \ref{h5h6h7h8-group},
\begin{figure}[h!]
  \centering
  \subfigure[The hyperplane at the height $\mathrm{I}(h_5)=0.3311$]{%
    \includegraphics[width=0.5\textwidth]{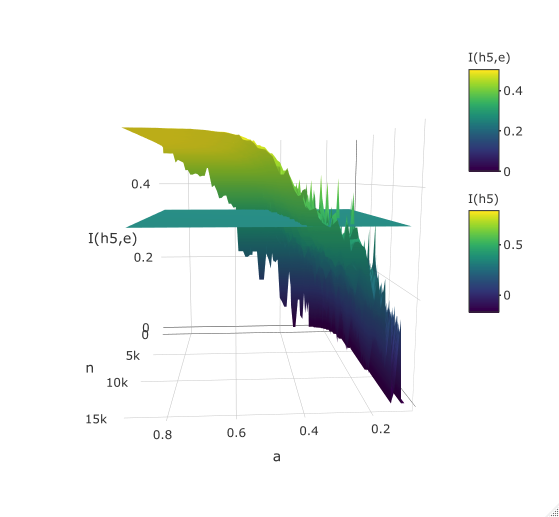}}%
\subfigure[The hyperplane at the height $\mathrm{I}(h_6)=0.9799$]{%
    \includegraphics[width=0.5\textwidth]{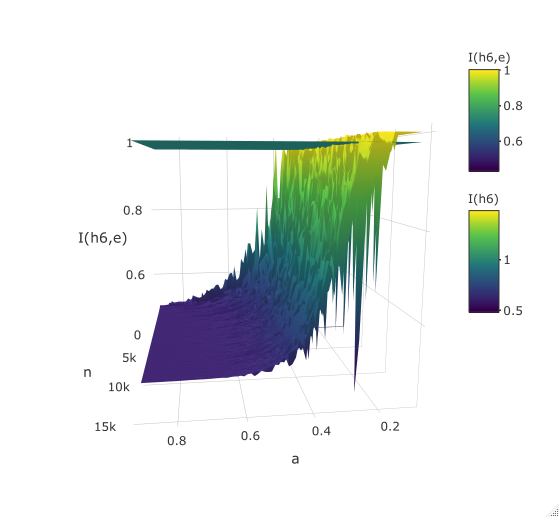}}%
    \\
 \subfigure[The hyperplane at the height $\mathrm{I}(h_7)=0.8157$]{%
    \includegraphics[width=0.5\textwidth]{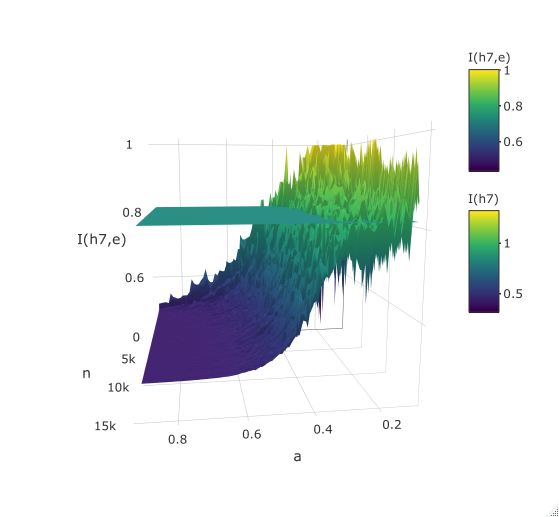}}%
\subfigure[The hyperplane at the height $\mathrm{I}(h_8)=0.5000$]{%
    \includegraphics[width=0.5\textwidth]{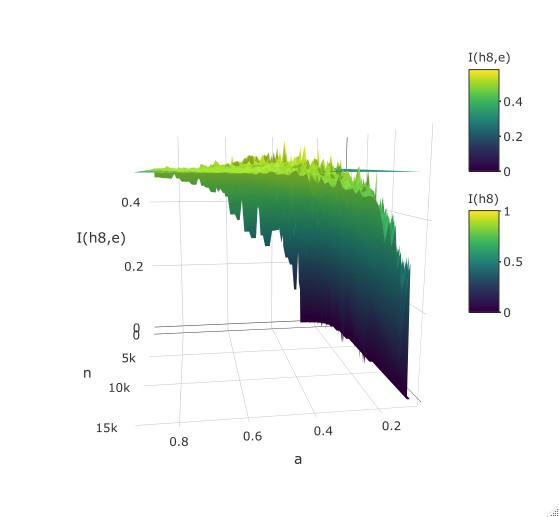}}%
    \caption{Values of $\widetilde{\mathrm{I}}_{n,\alpha}(h,\varepsilon)$ with respect to $n$ and $\alpha$ in the case of quartet (\ref{quartet-2}).}%
\label{h5h6h7h8-group}
\end{figure}	
we choose appropriate $\alpha$ values, denoted by $\alpha_{\text{vi}}$, for the functions of quartet (\ref{quartet-2}). To check the performance of these values, we draw convergence graphs in Figure \ref{h5h6h7h8-group-fixed}.
\begin{figure}[h!]
  \centering
  \subfigure[$\mathrm{I}(h_5)=0.3311$, $\alpha_{\text{vi}}=0.36$]{%
    \includegraphics[height=0.35\textwidth]{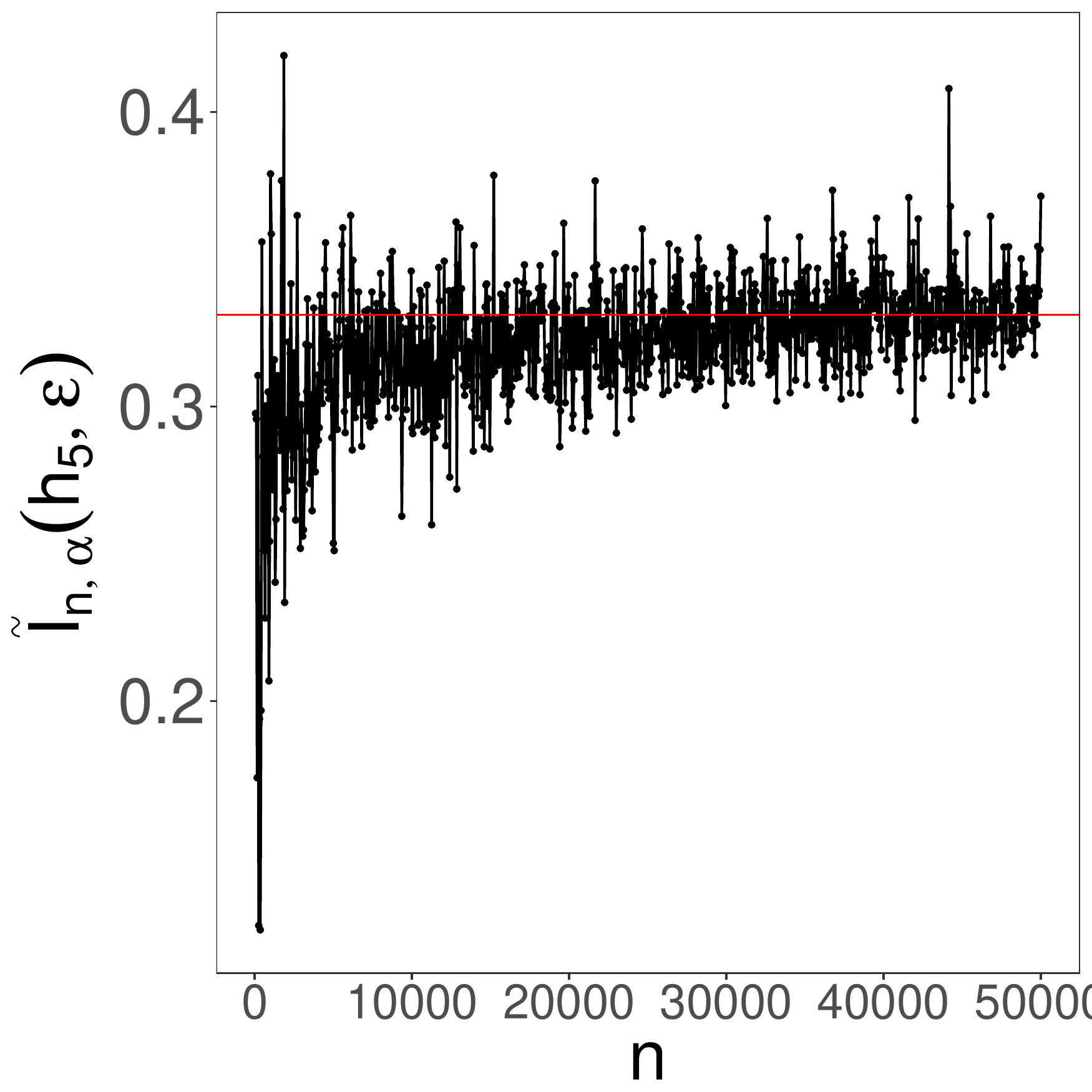}}%
\hspace{0.5in}
\subfigure[$\mathrm{I}(h_6)=0.9799$, $\alpha_{\text{vi}}=0.25$]{%
    \includegraphics[height=0.35\textwidth]{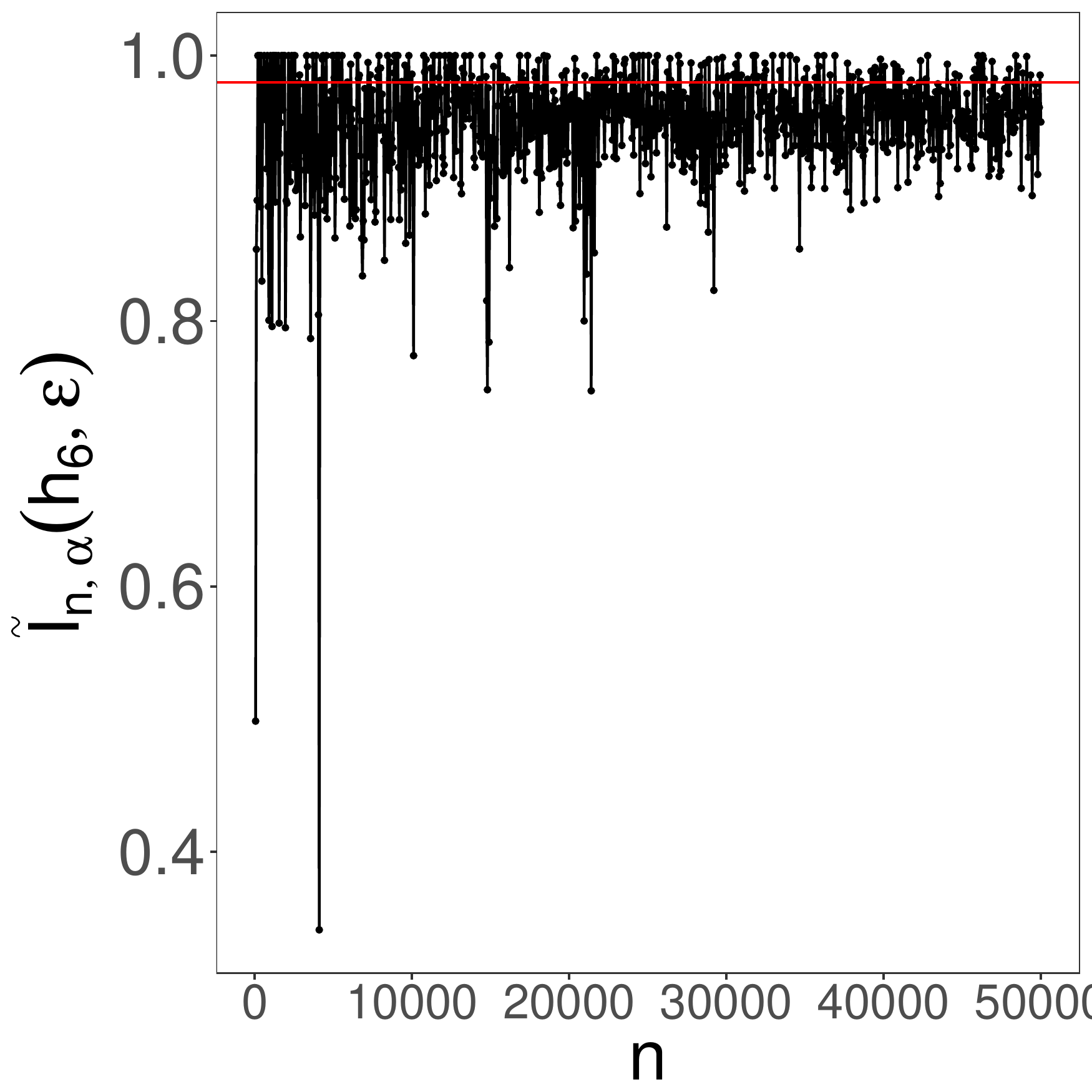}}%
\hspace{0.5in}
 \subfigure[$\mathrm{I}(h_7)=0.8157$, $\alpha_{\text{vi}}=0.28$]{%
    \includegraphics[height=0.35\textwidth]{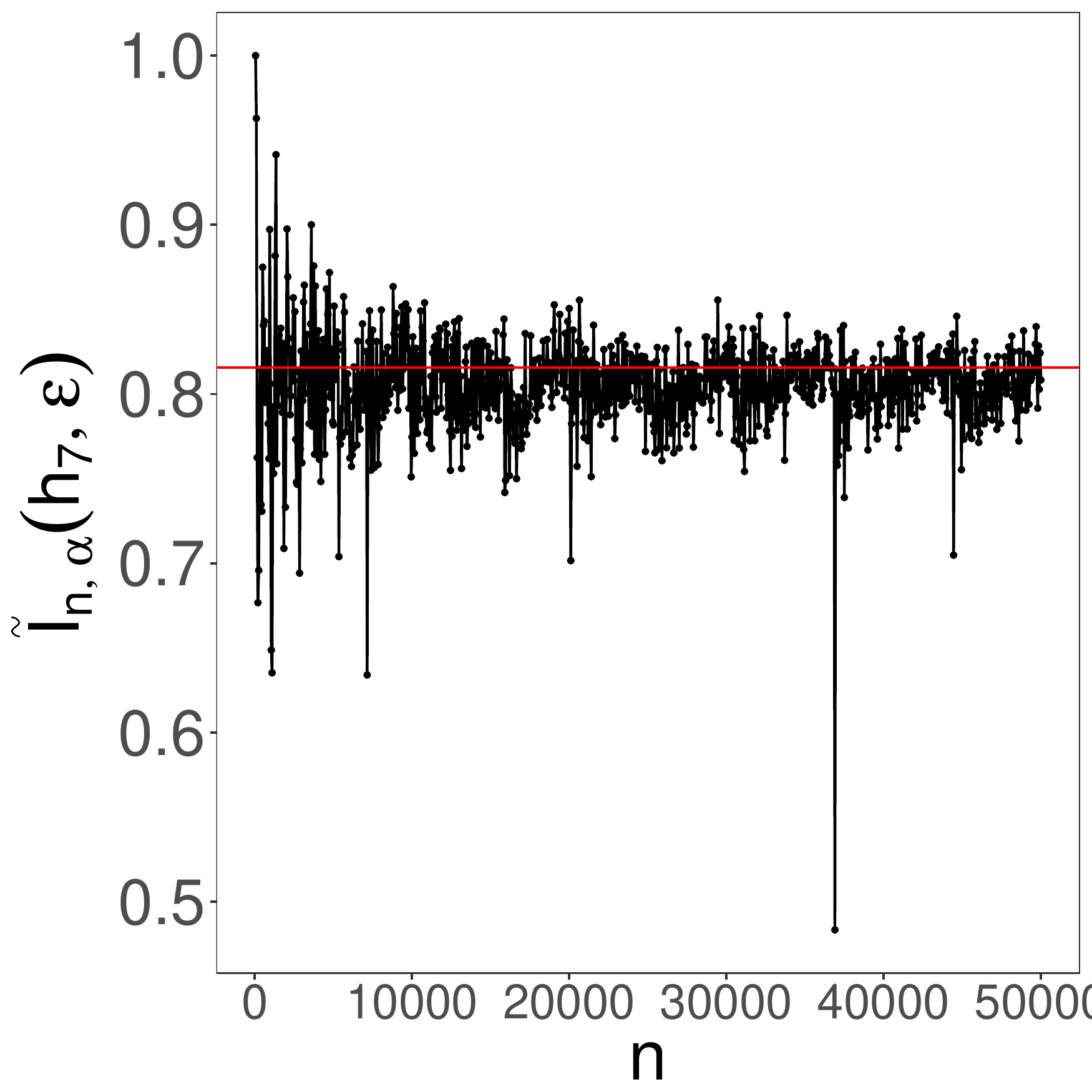}}%
\hspace{0.5in}
\subfigure[$\mathrm{I}(h_8)=0.5000$, $\alpha_{\text{vi}}=0.50$]{%
    \includegraphics[height=0.35\textwidth]{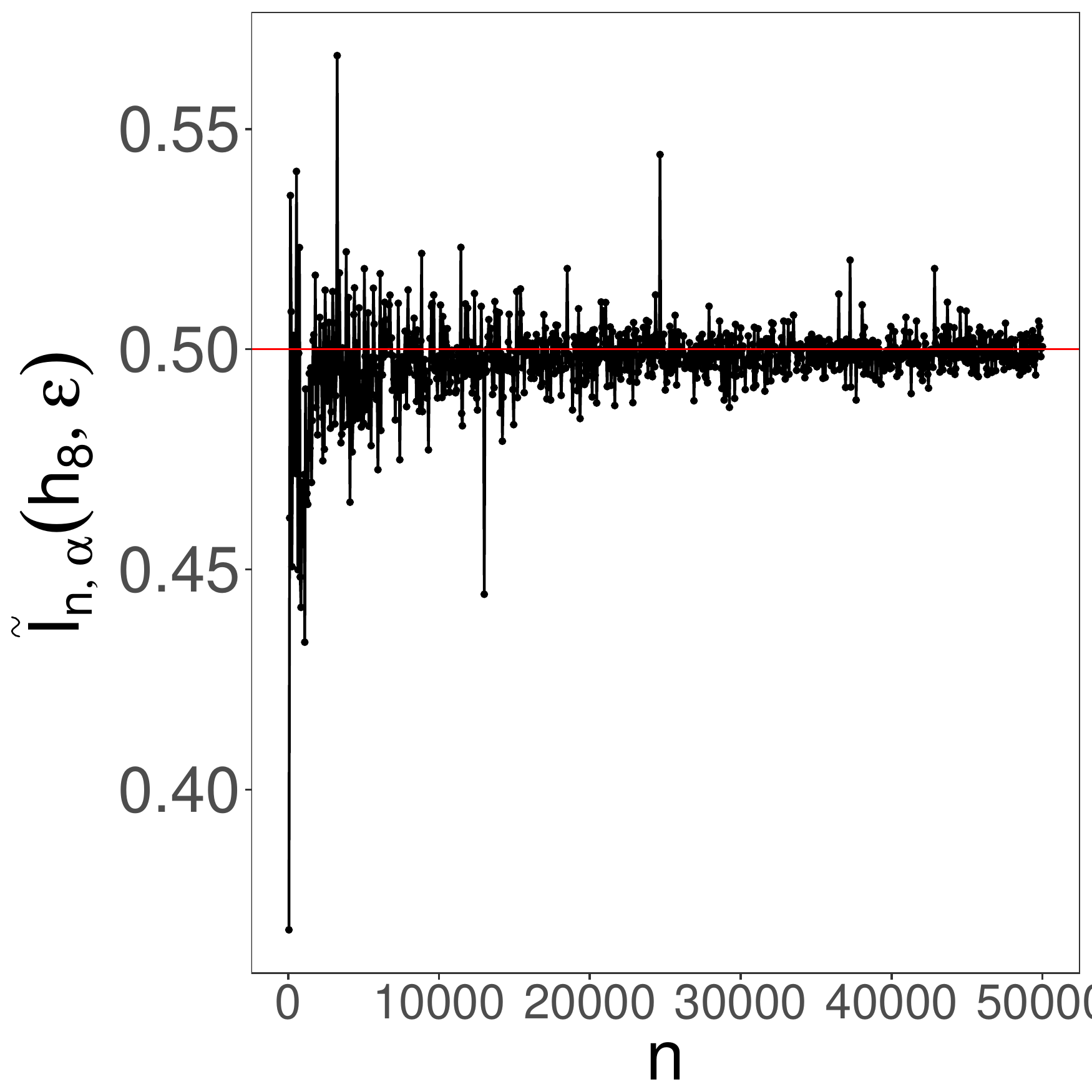}}%
    \caption{The performance of $\widetilde{\mathrm{I}}_{n,\alpha}(h,\varepsilon)$ with respect to $n$ in the case of quartet (\ref{quartet-2}) and based on visually assessed $\alpha$'s.}%
\label{h5h6h7h8-group-fixed}
\end{figure}	
Next, we use formula (\ref{approx-random-grouped}) to calculate point estimates of the actual index of increase for each of the functions in quartet (\ref{quartet-2}), whose values appear in Table \ref{table-1}.
\begin{table}[h!]\small
\centering
\begin{tabular}{l|cccc}
\hline\hline
	   & $h_{5}$ & $h_{6}$ &$h_{7}$ & $h_{8}$\\
\hline
    True values & 0.3311 & 0.9799 &0.8157&0.5000\\
    Point estimates & 0.3274 & 0.9737 &0.8094& 0.5042 \\
    Standard deviations & 0.0745 & 0.1103 & 0.1067 & 0.05237\\
    Confidence intervals &(0.0372, 0.3368)&(0.6527, 1.0000)&(0.6090, 1.0000)&(0.3675, 0.5713)\\
\hline
Estimates $\alpha_{\text{vi}}$& 0.36 & 0.25 & 0.28 & 0.50\\
\hline
\end{tabular}
\caption{Basic statistics and 95\% confidence intervals for quartet (\ref{quartet-2}) based on visually assessed $\alpha$'s.}
\label{table-1}
\end{table}
Finally, we use bootstrap to get standard errors and confidence intervals, all of which are also reported in Table \ref{table-1}.

Reflecting upon the findings in Table \ref{table-1}, we see that the values of $\alpha_{\text{vi}}$ corresponding to the functions $h_5$ and $h_8$ are outside the range $(0,1/3)$ specified by the consistency result of Theorem~\ref{th-1}, but this of course does not invalidate anything -- we are simply working with finite sample sizes $n$. Naturally, we are now eager to compare all the findings reported in Table \ref{table-1} with the corresponding ones obtained by cross validation, which is our next topic.

\subsection{Choosing $\alpha $ based on cross validation}

We now use the cross-validation technique to get estimates $\alpha_{\text{cv}}$ of the grouping parameter $\alpha $ for all the functions of quartet (\ref{quartet-2}). In Figure \ref{h5h6h7h8-cv},
\begin{figure}[h!]
  \centering
  \subfigure[Function $h_5$]{%
    \includegraphics[height=0.35\textwidth]{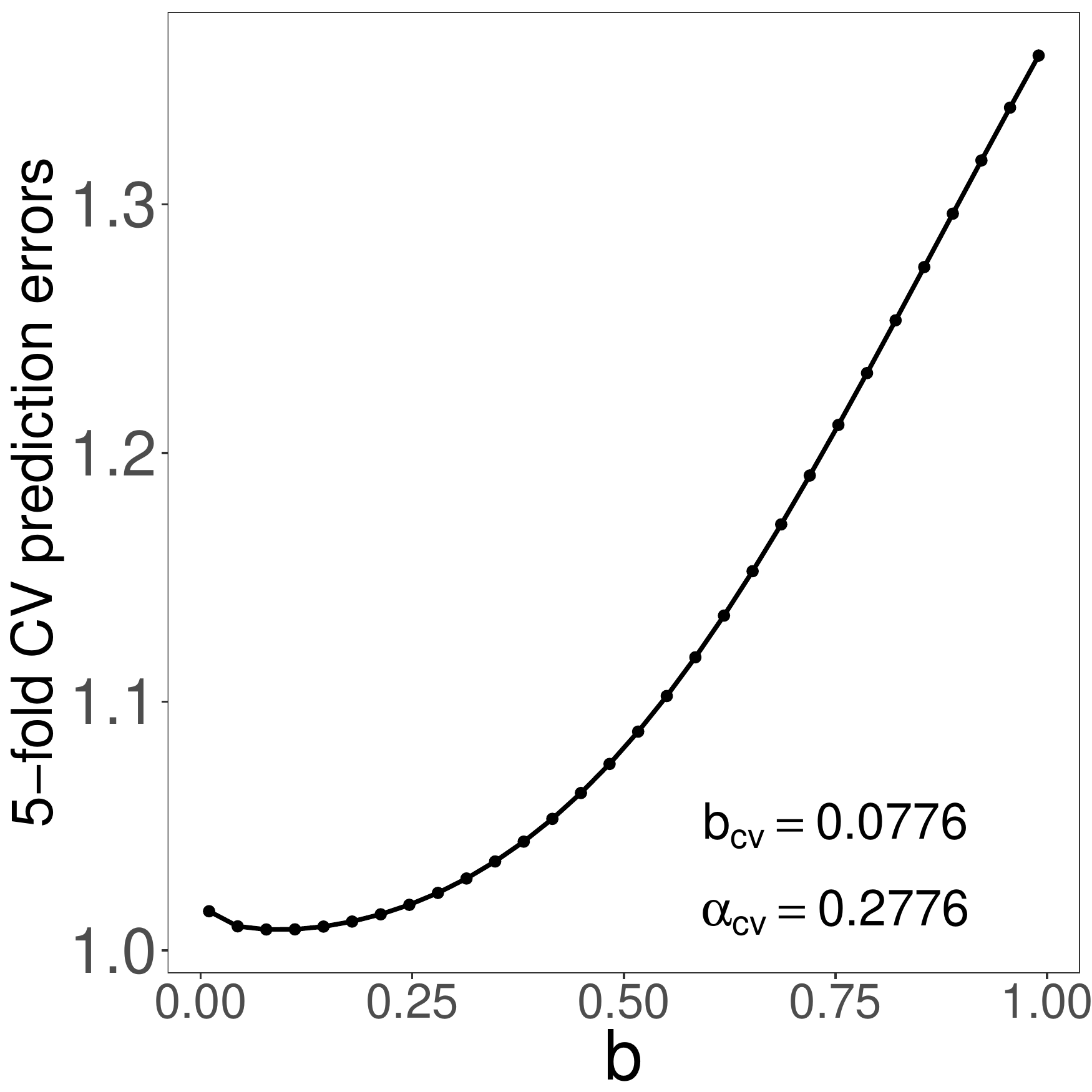}}%
\hspace{0.5in}
\subfigure[Function $h_6$]{%
    \includegraphics[height=0.35\textwidth]{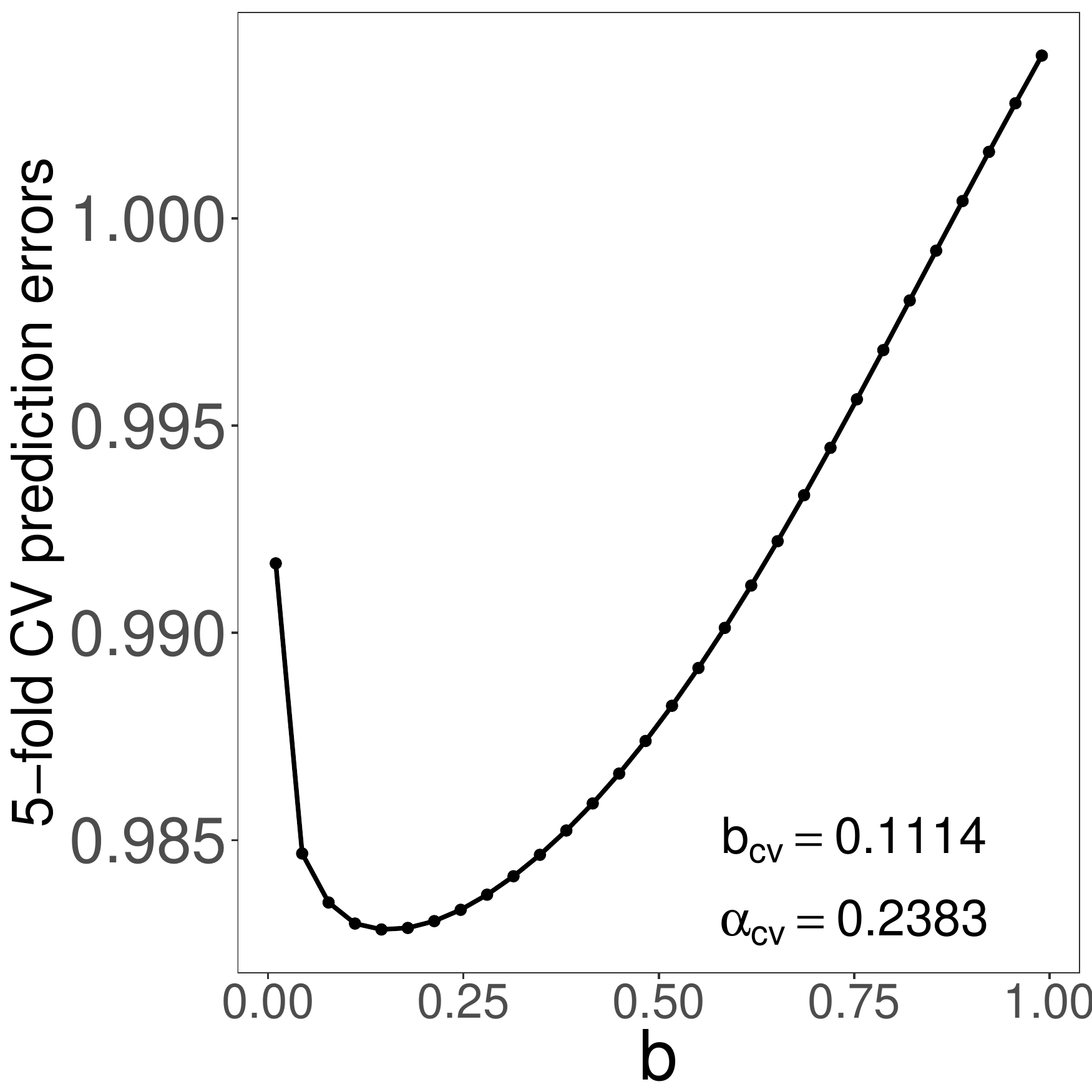}}%
\hspace{0.5in}
 \subfigure[Function $h_7$]{%
    \includegraphics[height=0.35\textwidth]{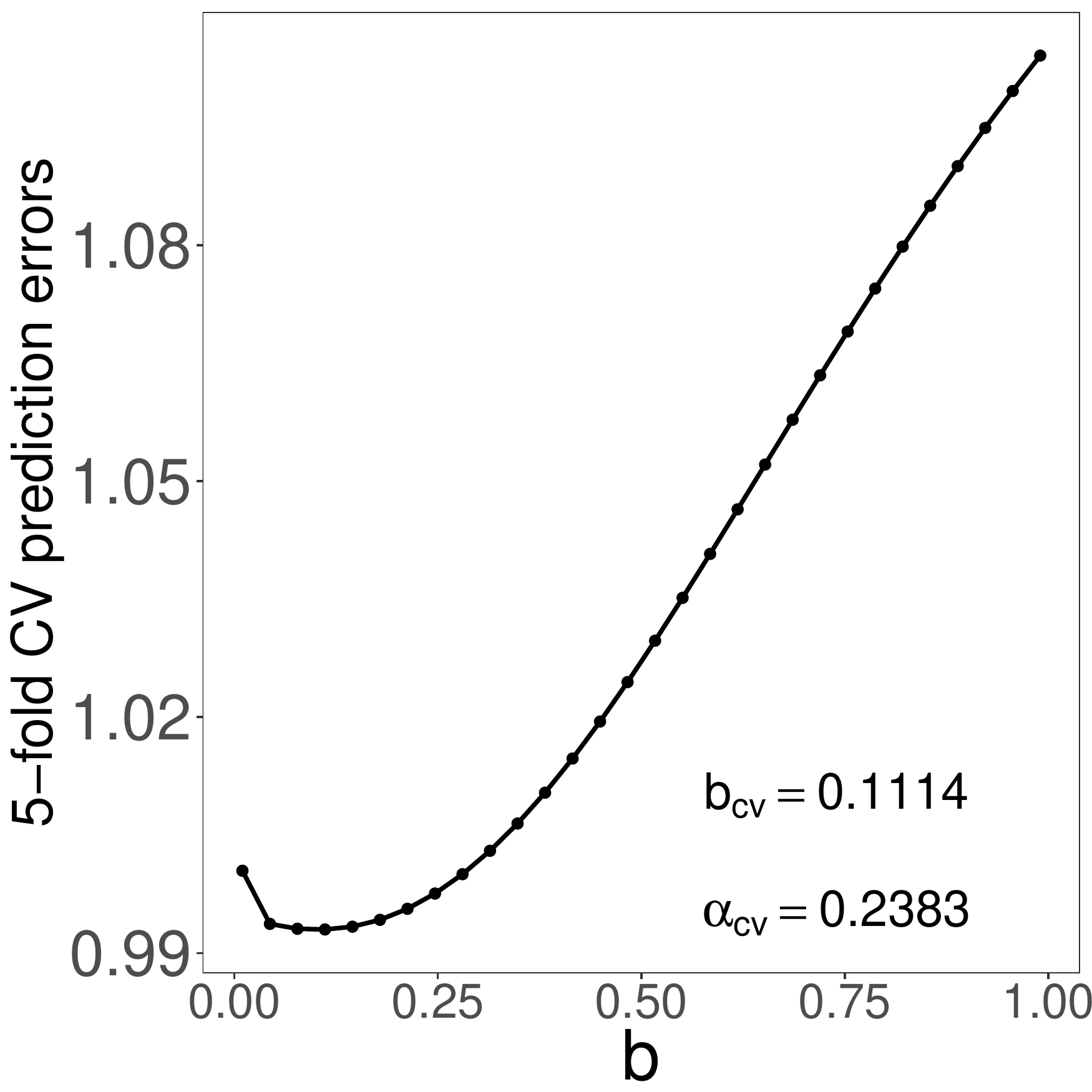}}%
\hspace{0.5in}
\subfigure[Function $h_8$]{%
    \includegraphics[height=0.35\textwidth]{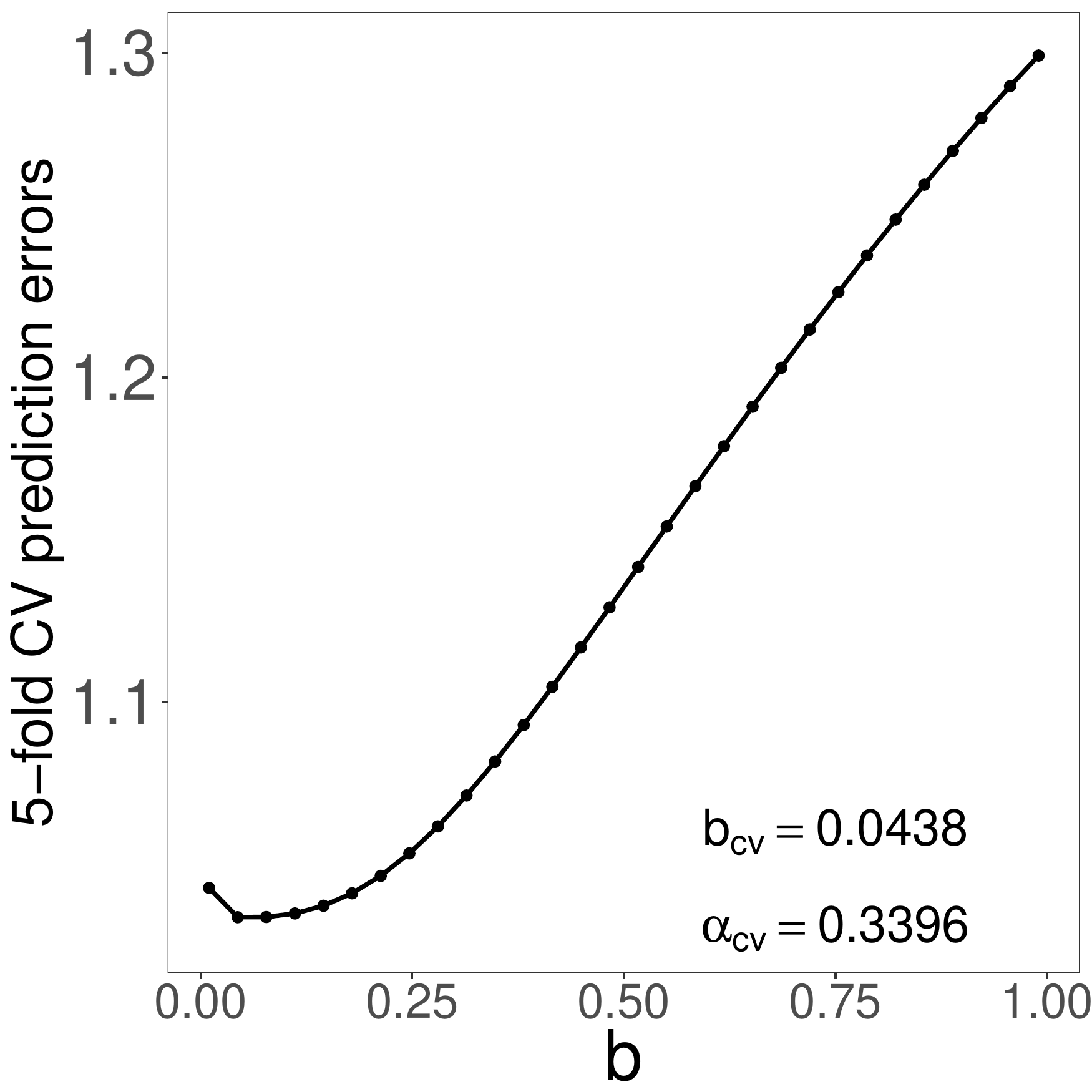}}%
    \caption{Cross validation, minima $b_{\text{cv}}$, and the grouping parameters  $\alpha_{\text{cv}}$ for quartet (\ref{quartet-2}).}%
\label{h5h6h7h8-cv}
\end{figure}	
we visualize the cross-validation scores, specify their minima $b_{\text{cv}}$, and also report the grouping parameters $\alpha_{\text{cv}}$ derived via the equation $ \alpha_{\text{cv}}=\log(1/b_{\text{cv}})/\log(n)$. Based on these $\alpha_{\text{cv}}$ values, we explore the performance of $\widetilde{\mathrm{I}}_{n,\alpha}(h,\varepsilon)$ using the convergence graphs depicted in Figure \ref{h5h6h7h8-group-fixed-cv}.
\begin{figure}[h!]
  \centering
  \subfigure[$\mathrm{I}(h_5)=0.3311$, $\alpha_{\text{cv}}=0.28$]{%
    \includegraphics[height=0.35\textwidth]{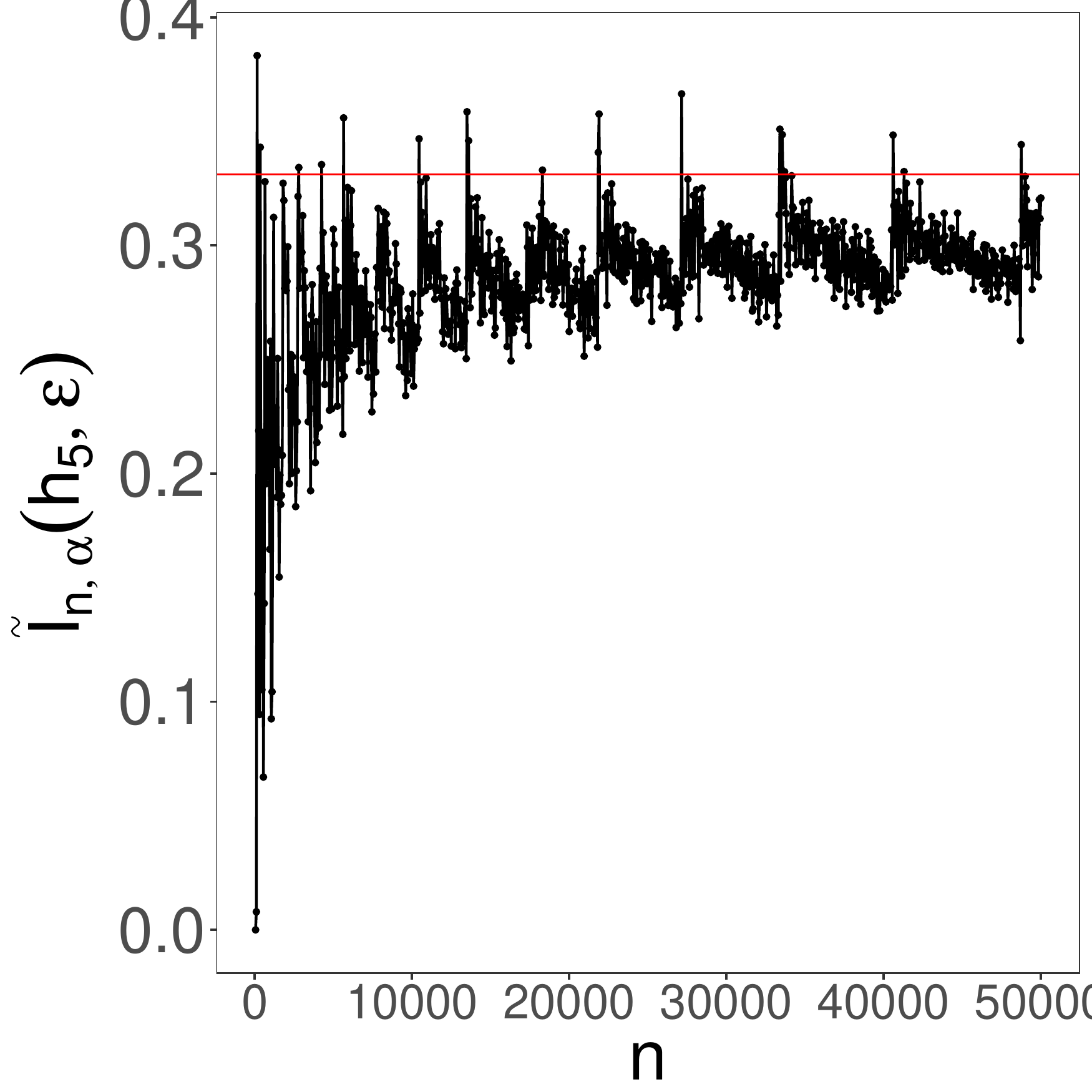}}%
\hspace{0.5in}
\subfigure[$\mathrm{I}(h_6)=0.9799$, $\alpha_{\text{cv}}=0.24$]{%
    \includegraphics[height=0.35\textwidth]{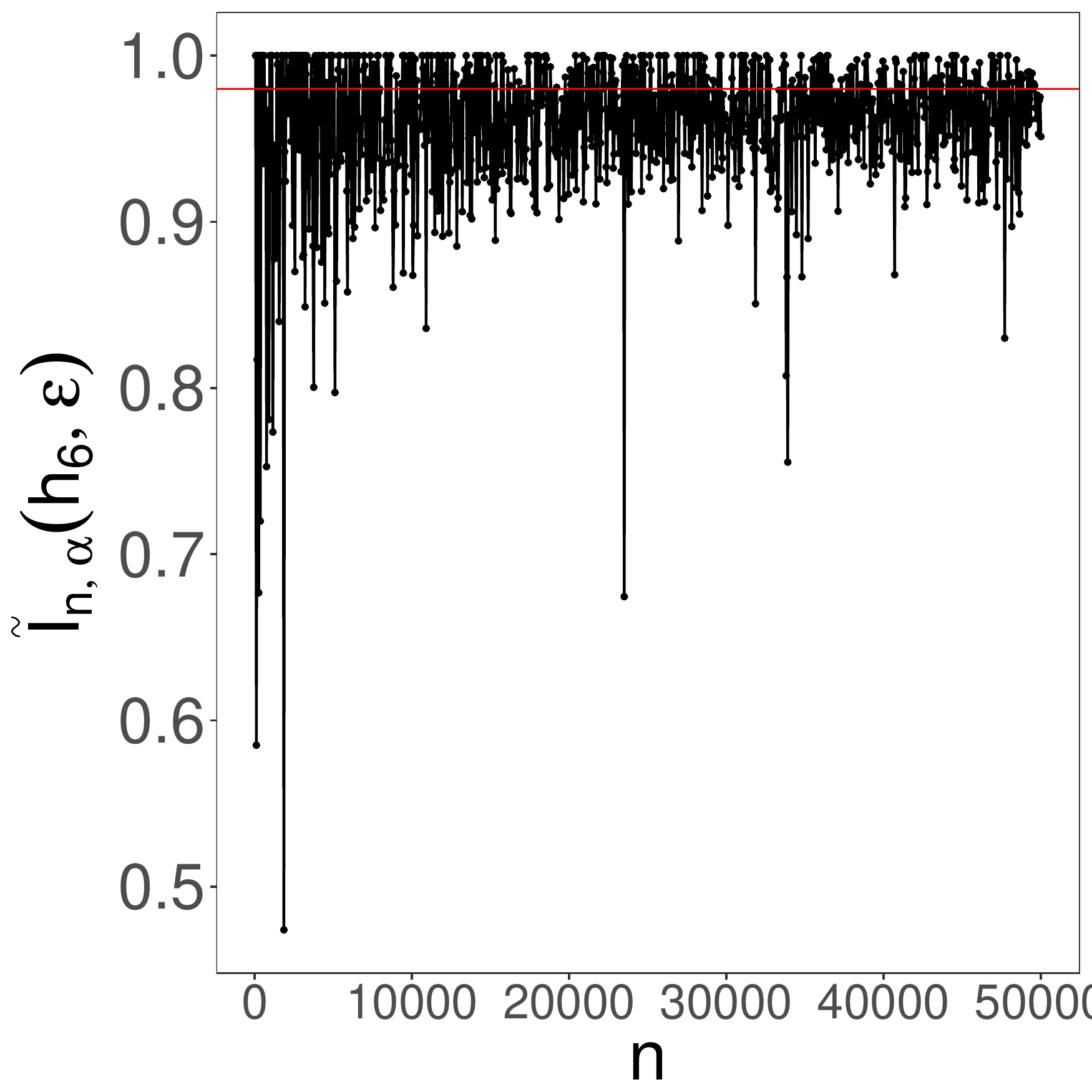}}%
\hspace{0.5in}
 \subfigure[$\mathrm{I}(h_7)=0.8157$, $\alpha_{\text{cv}}=0.24$]{%
    \includegraphics[height=0.35\textwidth]{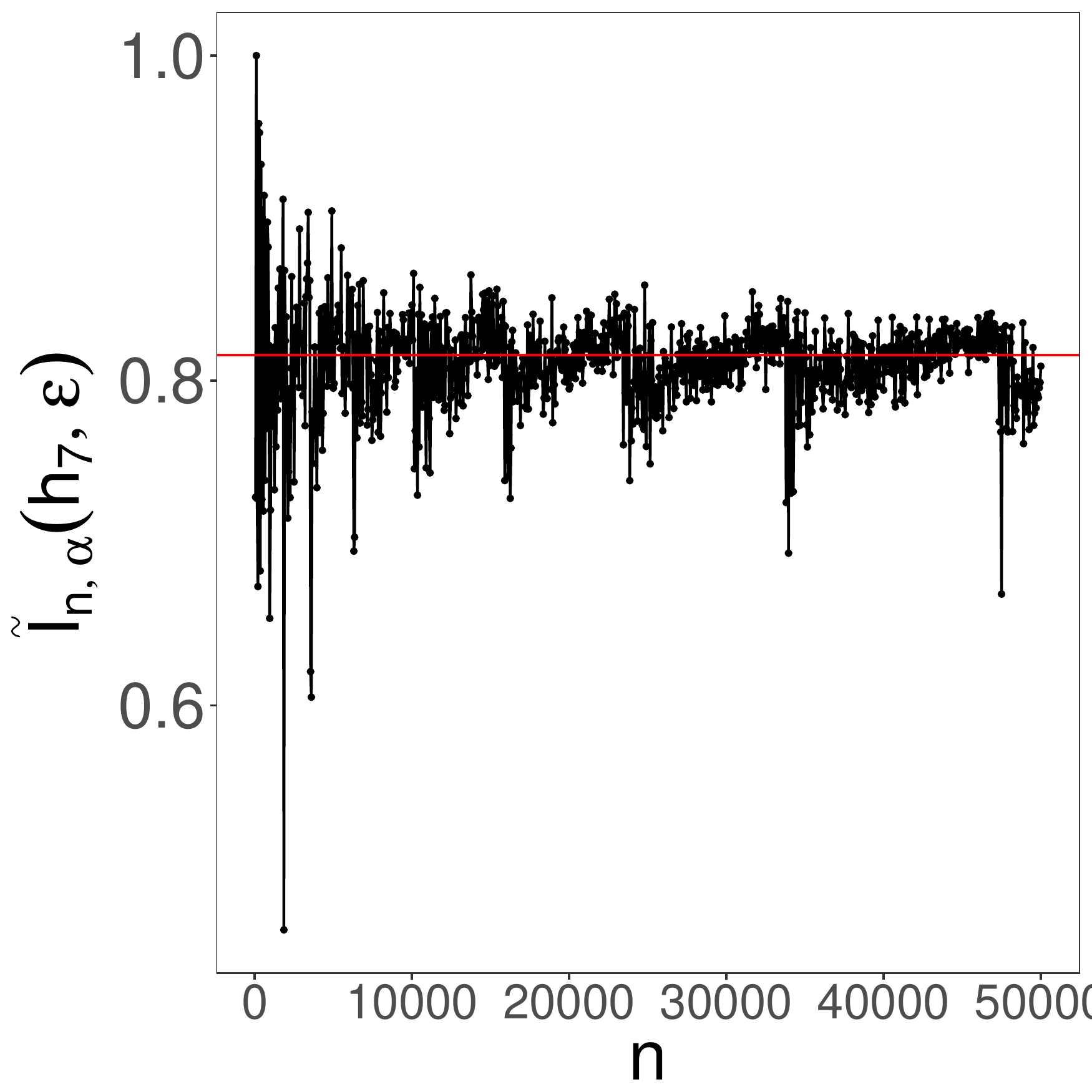}}%
\hspace{0.5in}
\subfigure[$\mathrm{I}(h_8)=0.5000$, $\alpha_{\text{cv}}=0.34$]{%
    \includegraphics[height=0.35\textwidth]{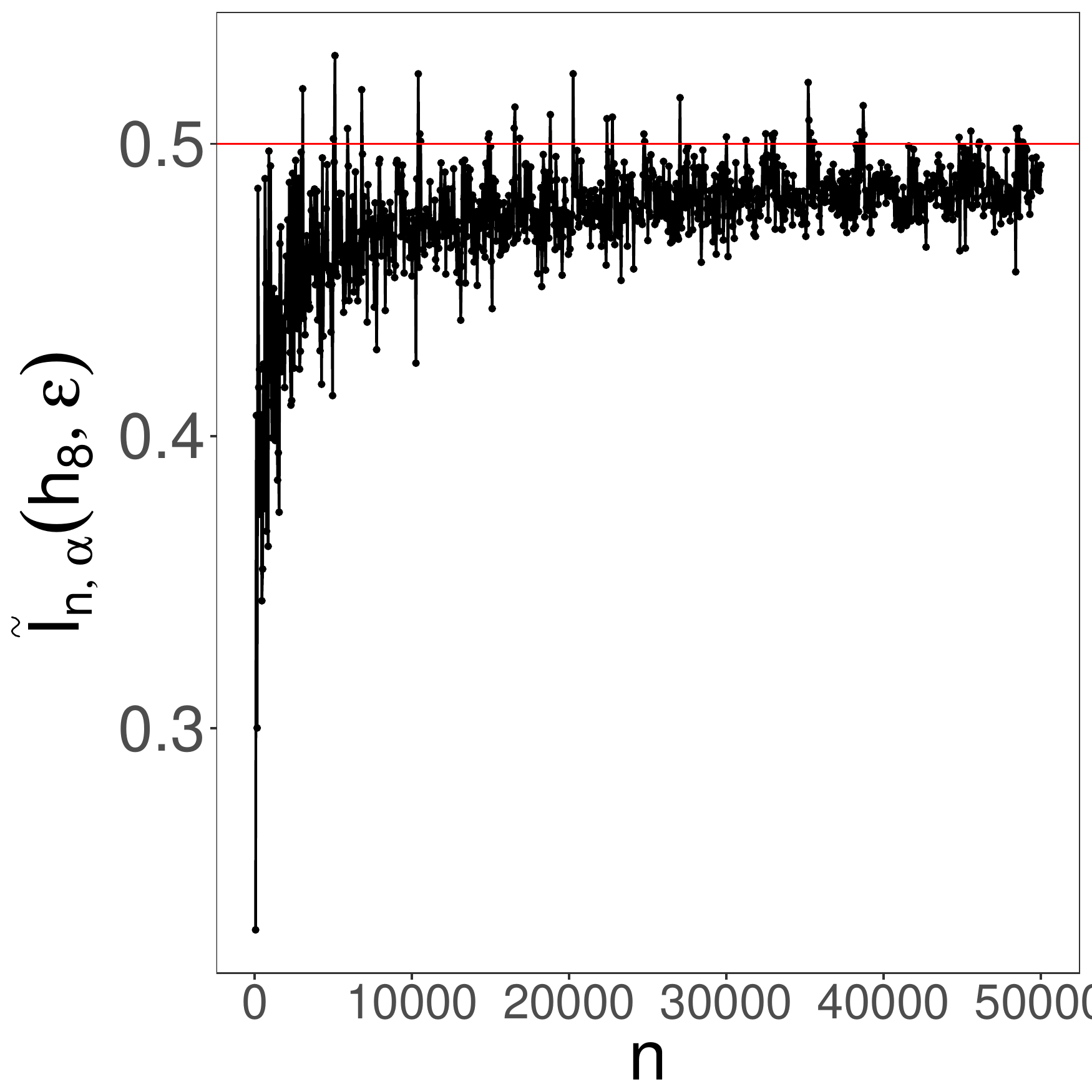}}%
    \caption{The performance of $\widetilde{\mathrm{I}}_{n,\alpha}(h,\varepsilon)$ with respect to $n$ in the case of quartet (\ref{quartet-2}) and cross validated $\alpha$'s.}%
\label{h5h6h7h8-group-fixed-cv}
\end{figure}
The values of point estimates, standard errors, and confidence intervals are reported in Table \ref{table-cv}.
\begin{table}[h!]\small
\centering
\begin{tabular}{l|cccc}
\hline\hline
	   & $h_{5}$ & $h_{6}$ &$h_{7}$ & $h_{8}$\\
\hline
    True values & 0.3311 & 0.9799 &0.8157&0.5000\\
    Point estimates & 0.2771 & 0.9894 &0.8378& 0.4703 \\
    Standard deviations & 0.0797 & 0.1201 & 0.1084 & 0.1185\\
    Confidence intervals &(0.0000, 0.2813)&(0.5987, 1.0000)&(0.6256, 1.0000)&(0.1803, 0.6193)\\
\hline
Estimates $\alpha_{\text{cv}}$& 0.28 & 0.24 & 0.24 & 0.34 \\
\hline
\end{tabular}
\caption{Basic statistics and 95\% confidence intervals for quartet (\ref{quartet-2}) based on cross validation.}
\label{table-cv}
\end{table}

Note that the first three values of $\alpha_{\text{cv}}$ reported in Table~\ref{table-cv} are inside the range $(0,1/3)$ specified by the consistency result of Theorem~\ref{th-1}, whereas $\alpha_{\text{cv}}=0.3396$ corresponding to $h_8$ is just slightly outside the range. Note also that the values of $\alpha_{\text{cv}}$ corresponding to the functions $h_5$ and $h_8$ are considerably smaller than the corresponding $\alpha_{\text{vi}}$'s reported in Table~\ref{table-1}.

The confidence intervals reported in Tables \ref{table-1} and  \ref{table-cv} comfortably cover the actual values of $\mathrm{I}(h)$, and the widths of these confidence intervals, denoted by $\textrm{width}_{\text{vi}}$ and $\textrm{width}_{\text{cv}}$ respectively, are comparable for the functions $h_5$, $h_6$ and $h_7$. The $\textrm{width}_{\text{cv}}$ of the cv-based confidence interval for the function $h_8$ is, however, considerably wider than the corresponding $\textrm{width}_{\text{vi}}$ reported in Table \ref{table-1}. In summary, the relative differences $\textrm{width}_{\text{cv}}/\textrm{width}_{\text{vi}}-1$ for the functions $h_5$, $h_6$, $h_7$ and $h_8$ are $-0.0611$, $0.1555$, $-0.0425$ and $1.1541$, respectively. We finish the discussion by recalling Wasserman's (2005) advice: ``Do not assume that, if the estimator [...] is wiggly, then cross-validation has let you down. The eye is not a good judge of risk'' (Remark~20.18, page~317).

\section{Summary and concluding notes}
\label{conclude}

Davydov and Zitikis (2017) introduced an index of increase when populations are modelled with continuous functions. Chen and Zitikis (2017) explored a modification of the index when populations are discrete and presented in the form of scatterplots, and they also explored the situation when scatterplots are viewed as data sets, in which case they fitted (non-monotonic) regression functions and subsequently applied the technique by Davydov and Zitikis (2017) to assess monotonicity of the fitted functions.

In the present paper we have extended the aforementioned technique to the case when it is not desirable, or appropriate, to view scatterplots as populations, or to use regression methods to fit curves to scatterplots. The herein proposed technique is based on grouping and averaging data, and then calculating the index of increase. Since the grouping parameter depends on both deterministic and random features of the underlying problem, we have suggested a way for grouping data so that the resulting estimator of the index of increase would be consistent. Based on this estimator, we have then suggested a construction of bootstrap-based confidence intervals for the index of increase.

The derived theoretical results have been made accessible to practitioners by detailed descriptions and analyses of various computational aspects inherent in our proposed solution of the problem.

\section*{Acknowledgements}

We are indebted to the anonymous reviewers for suggestions, insightful comments, and constructive criticism that guided our work on the revision. The research has been supported by the Natural Sciences and Engineering Research Council (NSERC) of Canada.

\end{document}